\definecolor{qqffff}{rgb}{0,1,1}
\definecolor{ttffcc}{rgb}{0.2,1,0.8}
\definecolor{ududff}{rgb}{0.30196078431372547,0.30196078431372547,1}
\definecolor{qqqqff}{rgb}{0,0,1}
\definecolor{ffffff}{rgb}{1,1,1}
\definecolor{ffqqqq}{rgb}{1,0,0}
\definecolor{qqffqq}{rgb}{0,1,0}
\definecolor{cqcqcq}{rgb}{0.7529411764705882,0.7529411764705882,0.7529411764705882}
\definecolor{xfqqff}{rgb}{0.4980392156862745,0,1}
\definecolor{ccwwff}{rgb}{0.8,0.4,1}
\definecolor{zzttqq}{rgb}{0.6,0.2,0}
\definecolor{cczzff}{rgb}{0.8,0.4,1}
\theoremstyle{definition}
\theoremstyle{plain}
\newtheorem{theorem}{Theorem}[section]
\newtheorem{lemma}[theorem]{Lemma}
\newtheorem{corollary}[theorem]{Corollary}
\newtheorem{proposition}[theorem]{Proposition}
\newtheorem{const}[theorem]{Construction}
\theoremstyle{definition}
\newtheorem{example}[theorem]{Example}
\newtheorem{notation}[theorem]{Notation}
\newtheorem{definition}[theorem]{Definition}
\newtheorem{remark}[theorem]{Remark}
\newcommand{\del}{\mbox{del}}
\newcommand{\lk}{\mbox{lk}}
\newcommand{\dist}{\mbox{dist}}
\newcommand{\hh}{\mbox{height}}
\newcommand{\lcm}{\mbox{lcm}}
\newcommand{\kk}{\Bbbk}
\newcommand\gen[1]{\left\langle #1 \right\rangle}
\newcommand\Set[2]{\left\lbrace #1 \mid #2 \right\rbrace}
\newcommand\set[1]{\left\lbrace #1 \right\rbrace}
\newcommand{\calN}{\mathcal{N}}
\newcommand{\calS}{\mathcal{S}}
\newcommand{\calI}{\mathcal{I}}
\newcommand{\calF}{\mathcal{F}}
\newcommand{\scrF}{\mathscr{F}}
\newcommand{\calA}{\mathcal{A}}
\newcommand{\MVC}{\raisebox{-0.2ex}{\scalebox{0.55}[1]{$\mathbf{MVC}$}}}
\newcommand{\VC}{\raisebox{-0.2ex}{\scalebox{0.55}[1]{$\mathbf{VC}$}}}
\newcommand{\MOTD}{\raisebox{-0.2ex}{\scalebox{0.55}[1]{$\mathbf{MOTD}$}}}
\newcommand{\MTD}{\raisebox{-0.2ex}{\scalebox{0.55}[1]{$\mathbf{MTD}$}}}
\newcommand{\TD}{\raisebox{-0.2ex}{\scalebox{0.65}[1]{$\mathbf{TD}$}}}
\newcommand{\OTD}{\raisebox{-0.2ex}{\scalebox{0.6}[1]{$\mathbf{OTD}$}}}
\title{Geometrically vertex decomposable open neighborhood ideals}
\author{Jounglag Lim}
\date{}
\begin{document}
\begin{abstract}
    In this paper, we prove that the open neighborhood ideal of a TD-unmixed tree is geometrically vertex decomposable. 
    This result implies that the associated Stanley-Reisner complex is vertex decomposable. 
    We further demonstrate that Cohen-Macaulay open neighborhood ideals of trees are special cases of Cohen-Macaulay facet ideals of simplicial trees. 
    Finally, we investigate open neighborhood ideals of chordal graphs and establish that almost all square-free monomial ideal can be realized as the open neighborhood ideal of a chordal graph.
\end{abstract}

\maketitle


\section{Introduction}

Since Stanley's proof of the Upper Bound Conjecture for simplicial spheres~\cite{Stan_UBC}, combinatorial commutative algebra has witnessed significant progress in connecting commutative algebra with the combinatorics of simplicial (multi)complexes.
In 1990, Villarreal established a fundamental link between graph theory and combinatorial commutative algebra.
Given a finite simple graph $G = (V,E)$, he defined the \emph{edge ideal} of $G$, denoted $\calI(G)$, to be the square-free monomial ideal in $R = \kk[V]$ generated by the edges of $G$.
Notably, he characterized the trees whose edge ideals are Cohen-Macaulay~\cite{MR1031197}.
Following Villarreal's work, numerous types of ideals associated to graphs have been introduced~\cite{MR1666661,MR4132629,MR2252121,MR4132629}.
In particular, Sharifan and Moradi defined the \emph{closed neighborhood ideal} of a graph and studied the Castelnuovo--Mumford regularity and projective dimension of such ideals~\cite{MR4132629}.
Subsequently, Honneycutt and Sather-Wagstaff characterized the trees whose closed neighborhood ideals are Cohen-Macaulay~\cite{MR4445927}.

In this paper, we consider the \emph{open neighborhood ideals} of trees, which are distinct from closed neighborhood ideals.
For a graph $G$, the \emph{open neighborhood ideal} of $G$, denoted $\calN(G)$, is the square-free monomial ideal generated by the open neighborhoods of the vertices of $G$.
The simplicial complex whose Stanley--Reisner ideal corresponds to $\calN(G)$ is called the \emph{stable complex}.
In prior work~\cite{COURAGE}, we characterized the trees whose open neighborhood ideals are Cohen--Macaulay (by proving that the associated stable complexes are shellable) and computed their Cohen--Macaulay type.

The main contributions of this paper are as follows:
\begin{itemize}
    \item[(1)] We show that Cohen--Macaulay open neighborhood ideals of trees are geometrically vertex decomposable (Theorem~\ref{thm. oni is GVD}), which implies that their associated Stanley--Reisner complexes are vertex decomposable.
    \item[(2)] We establish a relationship between the open neighborhood ideals of TD-unmixed trees and the Cohen--Macaulay facet ideals of simplicial trees (Theorem~\ref{thm. odd ONI is facet ideal}).
    \item[(3)] We determine which square-free monomial ideals can be realized as open neighborhood ideals of finite simple graphs (Corollary~\ref{cor. any SFM as ONI of chordal}).
\end{itemize}

In Section~\ref{sec. ONI and Scomp}, we provide background on open neighborhood ideals of trees and define the stable complex of a graph, which serves as the Stanley--Reisner complex of the open neighborhood ideal.
In Section~\ref{sec. geom. vert. decomp. and EScomp}, we define geometrically vertex decomposable square-free monomial ideals, a property equivalent to the vertex decomposability of the associated Stanley--Reisner complex.
We also introduce the odd-open neighborhood ideal of balanced trees, a modified version of open neighborhood ideal.
In Section~\ref{sec. ESC are GVD}, we prove that the odd-open neighborhood ideal of TD-unmixed balanced trees are geometrically vertex decomposable (Theorem~\ref{thm. oni is GVD}).
In Section~\ref{sec. ONI and facet ideals}, we address Item~(2).
Finally, in Section~\ref{sec. ONI of chordal}, we address Item~(3) by studying the open neighborhood ideals of chordal graphs.

\section*{Acknowledgments}
The author would like to thank Adam Van Tuyl for his numerous suggestions and feedback on the first draft of this paper.
\section{Preliminary}\label{sec. ONI and Scomp}

In this section, we review the preliminary definitions and results used throughout the paper.

\subsection{Vertex decomposable complexes and the Stanley-Reisner correspondence}

We begin with basic terminology regarding simplicial complexes and recall the Stanley--Reisner correspondence.
For a finite set $V$, a \emph{simplicial complex} on $V$ is a collection $\Delta \subseteq 2^V$ that is closed under inclusion; that is, if $F \in \Delta$ and $G \subseteq F$, then $G \in \Delta$.
The elements of $V$ are called the \emph{vertices} of $\Delta$, the elements of $\Delta$ are called \emph{faces}, and the maximal elements of $\Delta$ (with respect to inclusion) are called \emph{facets}.
We denote the set of facets of $\Delta$ by $\mathcal{F}(\Delta)$.
If $\mathcal{F}(\Delta) = \set{F_1,\dots,F_k}$, we write $\Delta = \gen{F_1,\dots,F_k}$. More generally, if $\mathcal{F}(\Delta) \subseteq S \subseteq \Delta$, we write $\Delta = \gen{S}$.
The \emph{dimension} of a face $F \in \Delta$ is defined as $\dim F := |F| - 1$, and the dimension of $\Delta$ is $\dim \Delta := \max \Set{\dim F}{F \in \Delta}$.
We say that $\Delta$ is \emph{pure} if every facet of $\Delta$ has the same dimension.

For a face $F \in \Delta$, the \emph{deletion} and the \emph{link} of $\Delta$ with respect to $F$ are the simplicial complexes defined by
$$
\del_\Delta(F) := \Set{G \in \Delta}{G \cap F = \emptyset}
$$
and
$$
\lk_\Delta(F) := \Set{G \in \Delta}{G \cap F = \emptyset, G \cup F \in \Delta}.
$$
A vertex $v \in V$ is a \emph{shedding vertex} of $\Delta$ if every facet of $\del_{\Delta}(v)$ is a facet of $\Delta$.
The following definition is due to Provan and Billera.

\begin{definition}[\protect{\cite[Definition 2.1]{joinIsShellable}}]\label{def. vert decomp}
    Let $\Delta$ be a pure simplicial complex.
    We say that $\Delta$ is \emph{vertex decomposable} if
    \begin{enumerate}
        \item $\Delta$ is a simplex or the empty set, or
        \item there exists a shedding vertex $v$ of $\Delta$ such that both $\del_\Delta(v)$ and $\lk_\Delta(v)$ are vertex decomposable.
    \end{enumerate}
\end{definition}

Let $\Delta'$ and $\Delta''$ be simplicial complexes on disjoint vertex sets $V'$ and $V''$, respectively.
The \emph{join} of $\Delta'$ and $\Delta''$ is the simplicial complex on $V' \cup V''$ defined by
$$
\Delta' \ast \Delta'' := \Set{F' \cup F''}{F' \in \Delta', F'' \in \Delta''}.
$$
The following result will be useful later in the paper.

\begin{theorem}[\protect{\cite[Proposition 2.4]{joinIsShellable}}]\label{thm. join is VD}
    Let $\Delta'$ and $\Delta''$ be pure simplicial complexes with disjoint vertex sets.
    Then $\Delta' \ast \Delta''$ is vertex decomposable if and only if both $\Delta'$ and $\Delta''$ are vertex decomposable.
\end{theorem}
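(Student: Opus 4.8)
The plan is to prove both implications simultaneously by induction on the total number of vertices $|V'| + |V''|$, the engine being the interaction of deletions and links with the join. The first thing I would record is that for a vertex $v \in V'$ one has
\[
\del_{\Delta' \ast \Delta''}(v) = \del_{\Delta'}(v) \ast \Delta'' \quad\text{and}\quad \lk_{\Delta' \ast \Delta''}(v) = \lk_{\Delta'}(v) \ast \Delta'',
\]
with the symmetric statement for $v \in V''$. These identities are immediate from the definitions: since $V'$ and $V''$ are disjoint, deleting or linking a vertex of $V'$ touches only the $\Delta'$-component $F' = F \cap V'$ of each face $F$. The second basic fact I would establish is a clean description of facets of a join, namely that $F$ is a facet of $\Delta' \ast \Delta''$ if and only if $F = F' \cup F''$ with $F'$ a facet of $\Delta'$ and $F''$ a facet of $\Delta''$; this follows because the decomposition $F = (F \cap V') \cup (F \cap V'')$ is unique and the two parts may be enlarged independently. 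In particular $\Delta' \ast \Delta''$ is a simplex exactly when both factors are, and it is pure exactly when both factors are.

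For the direction $(\Leftarrow)$, assuming both factors vertex decomposable, I would first dispose of the case where both are simplices (then the join is a simplex). Otherwise, say $\Delta'$ is not a simplex, so it has a shedding vertex $v \in V'$ with $\del_{\Delta'}(v)$ and $\lk_{\Delta'}(v)$ vertex decomposable. I would check that $v$ sheds in $\Delta' \ast \Delta''$: by the join identity a facet of $\del_{\Delta' \ast \Delta''}(v)$ has the form $F' \cup F''$ with $F'$ a facet of $\del_{\Delta'}(v)$, and since $v$ sheds in $\Delta'$ the face $F'$ is a facet of $\Delta'$, so by the facet description $F' \cup F''$ is a facet of $\Delta' \ast \Delta''$. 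Then $\del_{\Delta' \ast \Delta''}(v)$ and $\lk_{\Delta' \ast \Delta''}(v)$ are joins of vertex decomposable complexes on strictly fewer vertices, hence vertex decomposable by the inductive hypothesis, and therefore so is $\Delta' \ast \Delta''$.

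For the direction $(\Rightarrow)$, assuming $\Delta' \ast \Delta''$ vertex decomposable, the simplex case again gives that both factors are simplices. Otherwise I would pick a shedding vertex $v$ of $\Delta' \ast \Delta''$ and, by symmetry, take $v \in V'$. The join identities present $\del_{\Delta'}(v) \ast \Delta''$ and $\lk_{\Delta'}(v) \ast \Delta''$ as vertex decomposable complexes on fewer vertices, so the inductive hypothesis yields that $\Delta''$, $\del_{\Delta'}(v)$, and $\lk_{\Delta'}(v)$ are each vertex decomposable. It remains to see that $v$ sheds in $\Delta'$: fixing any facet $F''$ of $\Delta''$, the shedding property of $v$ in $\Delta' \ast \Delta''$ forces $F' \cup F''$ to be a facet of $\Delta' \ast \Delta''$ for every facet $F'$ of $\del_{\Delta'}(v)$, and by the facet description this means $F'$ is a facet of $\Delta'$. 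Hence $\Delta'$ is vertex decomposable, completing the induction.

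The hard part here is not conceptual but a matter of propagating the purity hypothesis correctly through the recursion, since vertex decomposability is only defined for pure complexes. I would need to confirm that the shedding condition guarantees $\del_{\Delta'}(v)$ is pure of the same dimension as $\Delta'$, that the relevant links are pure, and that a join is pure precisely when its factors are, so that every complex fed into the inductive hypothesis is genuinely pure. The degenerate cases—void or empty factors and the exact reading of \q{simplex} at the base of the recursion—also require a careful statement, but once those are pinned down the whole argument reduces to the two short facet computations performed in each direction.
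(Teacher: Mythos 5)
The paper does not prove this statement at all --- it is imported verbatim from Provan--Billera \cite{joinIsShellable} --- so there is no internal proof to compare yours against; what you have written is, in substance, a correct reconstruction of the standard (essentially the original) argument. The two identities $\del_{\Delta' \ast \Delta''}(v) = \del_{\Delta'}(v) \ast \Delta''$ and $\lk_{\Delta' \ast \Delta''}(v) = \lk_{\Delta'}(v) \ast \Delta''$ for $v \in V'$, the description of facets of a join as precisely the unions of facets of the factors, and the simultaneous induction on $|V'| + |V''|$ are exactly the right ingredients, and both shedding transfers (into the join for $(\Leftarrow)$, out of the join for $(\Rightarrow)$) are carried out correctly. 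The purity bookkeeping you postpone does check out: the link of a vertex in a pure complex is pure (any facet $G$ of $\lk_{\Delta}(v)$ extends to a facet $F$ of $\Delta$, which necessarily contains $v$, forcing $G = F \setminus \set{v}$), the shedding condition forces the deletion to be pure of the same dimension, and a join of non-void complexes is pure exactly when both factors are.

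The one deferred point that deserves resolution rather than a flag is the void complex, because the truth of the statement, not just its bookkeeping, depends on it. If the complex with no faces at all (as opposed to $\set{\varnothing}$) is admitted and read as ``the empty set'' in Definition~\ref{def. vert decomp}, then the ``only if'' direction is false: the join of any pure complex $\Delta'$ with the void complex is void, hence vertex decomposable, with no constraint whatsoever on $\Delta'$. Your own $(\Rightarrow)$ argument silently excludes this situation at the step ``fixing any facet $F''$ of $\Delta''$,'' which requires $\Delta''$ to have at least one facet. So the convention under which the cited proposition is true --- and the one your proof actually uses --- is that the empty complex means $\set{\varnothing}$, whose unique facet is $\varnothing$; with that pinned down explicitly, your proof is complete.
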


Let $\kk[V]$ be a polynomial ring over a field $\kk$ whose variables are the elements of $V$.
The \emph{Stanley--Reisner ideal} of a simplicial complex $\Delta$ on $V$ is the square-free monomial ideal in $\kk[V]$ defined by
$$
I_\Delta := \gen{x_{i_1}\cdots x_{i_k} \ :\ \set{x_{i_1},\dots,x_{i_k}} \subseteq V, \set{x_{i_1},\dots,x_{i_k}} \notin \Delta},
$$
and the \emph{face ring} (or Stanley--Reisner ring) of $\Delta$ is the quotient ring $\kk[\Delta] := \kk[V]/I_{\Delta}$.
The primary decomposition of the Stanley--Reisner ideal is given by
$$
I_{\Delta} = \bigcap_{F \in \Delta} \gen{V \setminus F} = \bigcap_{F \in \mathcal{F}(\Delta)} \gen{V \setminus F},
$$
where the second decomposition is irredundant.
For a square-free monomial ideal $I \subseteq \kk[V]$, its \emph{Stanley--Reisner complex} is the simplicial complex on $V$ defined by
$$
\Delta_I := \Set{\set{x_{i_1},\dots,x_{i_k}} \subseteq V}{x_{i_1}\cdots x_{i_k} \notin I},
$$
with the convention that the empty set $\varnothing$ corresponds to the monomial $1$.
One can verify that for a simplicial complex $\Delta$ and a square-free monomial ideal $I$, we have $\Delta_{I_{\Delta}} = \Delta$ and $I_{\Delta_I} = I$.
This establishes a one-to-one correspondence between simplicial complexes on $V$ and square-free monomial ideals in $\kk[V]$, known as the \emph{Stanley--Reisner correspondence}.

\subsection{Open neighborhood ideals and Stable complexes}

In this subsection, we define open neighborhood ideals and stable complexes.
Throughout, every graph is assumed to be finite and simple.

\begin{notation}
    We write $\mathbb{N} := \set{1,2,3,\dots}$ and $\mathbb{N}_0 := \mathbb{N} \cup \set{0}$.
    For $n \in \mathbb{N}$, we define $[n] := \set{1,2,\dots,n}$ and $[n]_0 := \set{0,1,\dots,n}$.
    For a set $S$ and $k \in \mathbb{N}$, we use the notation $\binom{S}{k}$ to denote the set of all subsets of $S$ of size $k$.
    For a graph $G = (V,E)$, we identify the edges with subsets of $V$; hence $E \subseteq \binom{V}{2}$.
\end{notation}

\begin{definition}\label{def. ON, minimality}
    Let $G = (V,E)$ be a graph.
    For $v \in V$, the \emph{open neighborhood} of $v$ is the set
    $$
    N(v) = N_G(v) := \Set{u \in V}{\set{u,v} \in E},
    $$
    and for $S \subseteq V$, the \emph{open neighborhood} of $S$ is the set
    $$
    N(S) = N_G(S) := \bigcup_{v \in S} N(v).
    $$
    We say that a subset $S \subseteq V$ is \emph{minimal} (in $G$) if there exists no proper subset $S' \subsetneq S$ such that $N(S') = N(S)$.
\end{definition}

For convenience, we also define the \emph{closed neighborhood} of $v$ and $S$ as
$$
N[v] = N_G[v] := N_G(v) \cup \set{v} \quad \text{and} \quad N[S] = N_G[S] := N_G(S) \cup S,
$$
respectively.

\begin{definition}\label{def. ONI}
    Let $G = (V,E)$ be a graph, and let $R = \kk[V]$ be a polynomial ring over a field $\kk$ whose variables are the elements of $V$.
    For a subset $S \subseteq V$, let $X_S := \prod_{v \in S}v$ be the square-free monomial in $R$ with support $S$; we set $X_\varnothing = 1$ by convention.
    The \emph{open neighborhood ideal} of $G$ is the ideal
    $$
    \calN(G) := \gen{X_{N(v)}\ :\ v \in V} \subset R.
    $$
\end{definition}

\begin{example}\label{ex. 1}
    Let $T = (V,E)$ be the graph shown in Figure~\ref{fig: example 1}.

    \begin{figure}[ht]
        \centering
        \includegraphics[width=0.4\linewidth]{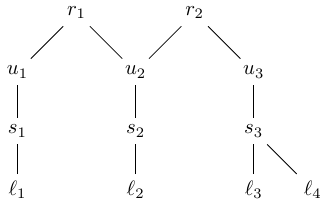}
        \caption{Graph $T$}
        \label{fig: example 1}
    \end{figure}

    \noindent We have $N(\ell_1) = \set{s_1}$, $N(s_1) = \set{\ell_1,u_1}$, $N(u_1) = \set{s_1,r_1}$, and $N(r_1) = \set{u_1,u_2}$.
    The open neighborhood ideal of $T$ is given by
    \begin{align*}
        \calN(T) &= \gen{s_1,s_2,s_3,\ell_1u_1,\ell_2u_2,\ell_3\ell_4u_3, s_1r_1, s_2r_1r_2, s_3r_3, u_1u_2,u_2,u_3}\\
        &= \gen{s_1,s_2,s_3,\ell_1u_1,\ell_2u_2,\ell_3\ell_4u_3,u_1u_2,u_2,u_3},
    \end{align*}
    where the second line is obtained by removing redundant generators.
\end{example}

Recall that a graph $G = (V,E)$ is a \emph{tree} if it is connected and contains no cycles, and $G$ is a \emph{forest} if every connected component of $G$ is a tree.
In the characterization of Cohen--Macaulay open neighborhood ideals of trees given in \cite{COURAGE}, a special class of trees played a central role, which we define next.

\begin{definition}\label{def. height, balanced trees}
    Let $G = (V,E)$ be a graph with at least one leaf.
    The \emph{height} of a vertex $v \in V$ is defined as
    $$
    \hh(v) = \hh_G(v) := \min\Set{\dist(v,\ell)}{\ell \mbox{ is a leaf in } G}.
    $$
    For vertices with $\deg_G(v) = 0$, we set $\hh(v) = 0$.
    We define the set of vertices at height $k$ by
    $$
    V_k = V_k(G) := \{v \in V\mid \hh(v) = k\},
    $$
    and the \emph{height} of $G$ is the integer
    $$
    \hh(G) := \max\Set{k \in \mathbb{N}}{V_k \neq \emptyset}.
    $$
    A tree (or forest) $T$ is called a \emph{balanced tree} (or \emph{forest}) if no two vertices of the same height are adjacent.
    Let $V_{odd} := \bigcup_{k = 0}^\infty V_{2k + 1}$ and $V_{even} := \bigcup_{k = 0}^\infty V_{2k}$.
    The \emph{odd-open neighborhood ideal} of a balanced tree (or forest) $T$ is the ideal
    $$
    \calN_{odd}(T) := \gen{X_{N(v)}\ :\ v \in V_{odd}}.
    $$
    Depending on the context, we view $\calN_{odd}(T)$ as an ideal in $\kk[V]$ or $\kk[V_{even}]$.
\end{definition}

\begin{example}\label{ex. 2}
    Consider the graph $T$ from Example~\ref{ex. 1}.
    Note that $T$ is a balanced tree of height 3.
    We have $V_0(T) = \set{\ell_1,\ell_2,\ell_3,\ell_4}$, $V_1(T) = \set{s_1,s_2,s_3}$, $V_2(T) = \set{u_1,u_2,u_3}$, and $V_3(T) = \set{r_1,r_2}$; hence $V_{odd}(T) = V_1 \cup V_3 = \set{s_1,s_2,s_3,r_1,r_2}$.
    The odd-open neighborhood ideal of $T$ is given by
    $$
    \calN_{odd}(T) = \gen{\ell_1u_1,\ell_2u_2,\ell_3\ell_4u_3,u_1u_2,u_2,u_3}.
    $$
\end{example}

Using the odd-open neighborhood ideals of balanced trees, we can express the open neighborhood ideal of a tree as the sum of two square-free monomial ideals sharing no variables among their minimal generators, as stated next.

\begin{theorem}[\protect{\cite[Theorem 5.2.1]{COURAGE}}]\label{theorem. sum of ONI}
    Let $T = (V,E)$ be a tree and let $R = \kk[V]$.
    Then there exist two subgraphs of $T$, denoted $T'$ and $T''$, such that:
    \begin{itemize}
        \item[(1)] $T'$ and $T''$ are balanced forests;
        \item[(2)] $V = V_{even}(T') \sqcup V_{even}(T'') \sqcup V_1(T)$, where $\sqcup$ denotes disjoint union;
        \item[(3)] $\calN(T) = \calN_{odd}(T') + \calN_{odd}(T'') + \gen{V_1(T)}$.
    \end{itemize}
\end{theorem}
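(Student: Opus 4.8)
My plan is to strip off the part of $\calN(T)$ that is absorbed into $\gen{V_1(T)}$ and then realize the surviving generators through two balanced forests read off from the height stratification of $T$. I would first record the elementary height lemmas: for adjacent $u,v$ one has $|\hh(u)-\hh(v)|\le 1$ (compare distances to nearest leaves), and along any shortest path from $v$ to its nearest leaf the height drops by exactly $1$ at each step, so every $v$ with $\hh(v)=k\ge 1$ has a neighbor of height $k-1$. From this I get $\gen{V_1(T)}\subseteq\calN(T)$ (each $w\in V_1$ equals $N(\ell)$ for an adjacent leaf $\ell$), and, crucially, that a generator $X_{N(v)}$ lies in $\gen{V_1(T)}$ precisely when $N(v)\cap V_1\neq\varnothing$. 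Since (apart from the single edge $K_2$, which I handle directly) every leaf has a height-$1$ neighbor and every height-$2$ vertex has a height-$1$ neighbor, this absorbs all generators coming from $V_0\cup V_2$, and reduces the task to the \emph{source} generators $X_{N(v)}$ with $v\in S:=\{v\in V_1: N(v)\cap V_1=\varnothing\}\cup(V_3\cup V_4\cup\cdots)$.

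The heart of the argument is the balanced case, where no two equal-height vertices are adjacent. Here I would take $T':=T$ and $T'':=T[V_3\cup V_4\cup\cdots]$, the subforest induced on the vertices of height at least $3$. The key technical point is a \emph{parity-flip lemma}: in a balanced tree every edge changes height by exactly $\pm1$, so any path has length congruent mod $2$ to the difference of its endpoints' heights; moreover every vertex that is a leaf of $T''$ lies in $V_3$ (a height-$\ge 4$ vertex has all neighbors of height $\ge 3$, hence degree $\ge 2$ in $T''$), whence $\hh_{T''}(x)\equiv \hh(x)+1\pmod 2$ for all $x$ of height $\ge 3$. This immediately gives (1), since opposite $T$-parities of adjacent vertices force opposite $T''$-parities, so $T''$ is balanced; and it gives (2), since $V_{even}(T'')=V_3\cup V_5\cup\cdots$, so $V_{even}(T')\sqcup V_{even}(T'')=(V_0\cup V_2\cup V_4\cup\cdots)\sqcup(V_3\cup V_5\cup\cdots)=V\setminus V_1$.

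For (3) in the balanced case I would check both inclusions against the reduced description $\calN(T)=\gen{V_1(T)}+\langle X_{N(v)}:v\in S\rangle$. The ideal $\calN_{odd}(T')=\calN_{odd}(T)$ supplies the generators of all odd-height vertices, in particular every odd-height source; the even-height sources $V_4\cup V_6\cup\cdots$ become, by the parity flip, exactly the odd-height vertices of $T''$, and they satisfy $N_{T''}(u)=N_T(u)$ because their neighbors all have height $\ge 3$, so $\calN_{odd}(T'')$ supplies their generators. Conversely, since every odd-height vertex of either forest keeps its full $T$-neighborhood, no spurious monomial outside $\calN(T)$ is produced; combined with $\gen{V_1(T)}\subseteq\calN(T)$ this yields equality.

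Finally I would reduce a general tree to the balanced case by resolving its ridge edges (edges whose endpoints have equal height). A convenient lemma is that \emph{cutting a ridge edge preserves all heights}: for a ridge $\{u,v\}$ neither endpoint's nearest leaf passes through the other, so every vertex of each component of $T-\{u,v\}$ retains its height. I expect the ridge resolution to be the main obstacle, for a subtle reason: a ridge edge joining two absorbed vertices (e.g.\ at height $2$) may simply be cut, but a ridge edge incident to a source cannot be deleted—doing so would shrink $N(v)$ and destroy the source generator—so it must instead be kept inside the forests and the forests re-rooted along it, sending its two endpoints to opposite parities (source in one forest, even leaf in the other) so that each retains its full neighborhood while the forests stay balanced. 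Carrying this out globally, by induction on the number of ridge edges while maintaining the two invariants that every source is an odd-height vertex of some forest and that every odd-height forest vertex retains its full $T$-neighborhood or carries a $V_1$-variable, is exactly the bookkeeping that forces $T'$ and $T''$ to be genuine balanced forests rather than $T$ itself.
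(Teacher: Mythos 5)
Your reduction of the generating set is sound: $\gen{V_1(T)}\subseteq\calN(T)$ via leaf neighborhoods, and a generator $X_{N(v)}$ lies in $\gen{V_1(T)}$ exactly when $N(v)\cap V_1\neq\varnothing$, which absorbs everything coming from $V_0\cup V_2$ and part of $V_1$. Your balanced case is also correct: with $T'=T$ and $T''=T[V_3\cup V_4\cup\cdots]$, the parity-flip lemma $\hh_{T''}(x)\equiv\hh_T(x)+1\pmod 2$ holds because every leaf or isolated vertex of $T''$ lies in $V_3(T)$ and every edge of a balanced tree changes height by exactly $\pm 1$; items (1)--(3) then check out. (For calibration: the paper does not prove this theorem at all---it imports both the statement and the construction of $T'$, $T''$ from \cite{COURAGE}---so your attempt is judged on its own correctness.)

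The genuine gap is the reduction of a general tree to the balanced case, which is where the actual content of the theorem lies. Your ``convenient lemma''---that cutting a ridge edge preserves all heights---is false. Take $P_7$ (vertices $0,\dots,7$, heights $0,1,2,3,3,2,1,0$) and cut the ridge edge $\set{3,4}$: each component is a copy of $P_3$ with heights $0,1,1,0$, so vertex $3$ drops from height $3$ to height $0$ and vertex $2$ from height $2$ to height $1$. The premise you cite (no endpoint's nearest-leaf path crosses the ridge) is true, but the conclusion does not follow: deleting the edge creates \emph{new} leaves at any endpoint of degree $2$, and since height is distance to the nearest leaf of the new graph, heights can collapse and the collapse propagates to nearby vertices. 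Consequently the partition into absorbed vertices and sources, on which your induction invariants rest, is not stable under cutting. In fact the paper's own Example~\ref{ex.3} shows that in any correct construction heights \emph{must} change: $r_2,r_3$ and $r_1$ have positive height in $T$ but height $0$ as isolated vertices of $T'$ and $T''$, so an invariant of height preservation cannot be maintained. The remaining plan for ridge edges incident to sources (``re-rooting,'' ``bookkeeping'') is precisely the construction that has to be supplied, and you explicitly leave it unexecuted; as written, the non-balanced case is not proved. A minor further point: $T=K_2$ cannot be ``handled directly'' as claimed, since there $\calN(K_2)=\gen{a,b}$ and $V_1=\varnothing$, while every generator of $\calN_{odd}$ of a balanced forest has degree at least $2$ (odd-height vertices are never leaves), so the asserted decomposition is impossible and this degenerate tree must be excluded by convention rather than argued.
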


\begin{example}\label{ex.3}
    Let $T = (V,E)$ be the tree shown in Figure~\ref{fig: ex 3}.
    Since $\hh_T(u_2) = 2 = \hh_T(u_3)$ and the vertices $u_2$ and $u_3$ are adjacent, $T$ is not a balanced tree.
    The subgraphs $T'$ and $T''$ of $T$ satisfying the conditions of Theorem~\ref{theorem. sum of ONI} are depicted in Figure~\ref{fig: ex 3ab}.
    \begin{figure}[h!]
        \centering
        \includegraphics[width=0.55\linewidth]{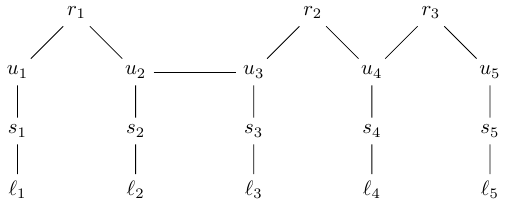}
        \caption{Graph $T$}
        \label{fig: ex 3}
    \end{figure}
    \begin{figure}[ht]
        \centering
        \includegraphics[width=0.344\linewidth]{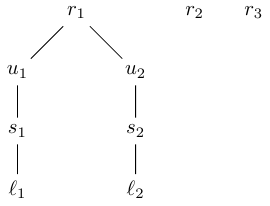}
        \qquad
        \vline
        \qquad
        \includegraphics[width=0.42\linewidth]{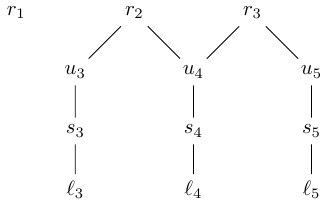}
        \caption{Subgraphs $T'$ (left) and $T''$ (right) of $T$}
        \label{fig: ex 3ab}
    \end{figure}
    Note that the isolated vertices $r_2$ and $r_3$ in $T'$ and $r_1$ in $T''$ have height 0. Details on the construction of $T'$ and $T''$ can be found in \cite[Definition 3.4.1]{COURAGE}.
\end{example}

\subsection{Characterization of TD-unmixed trees}

Analogous to the relationship between edge ideals and vertex covers, open neighborhood ideals are associated with total dominating sets.

\begin{definition}\label{def. TD-set, odd-TD-set}
    Let $G = (V,E)$ be a graph.
    A set $S \subseteq V$ is a \emph{total dominating set} (TD-set) if $N(S) = V$.
    Now let $T = (V,E)$ be a balanced tree.
    A set $S\subseteq V$ is an \emph{odd total dominating set} (odd-TD-set) of $T$ if $N(S) = V_{odd}$.
    We say that $G$ is (odd-) \emph{TD-unmixed} if every minimal (odd-) TD-set of $G$ has the same size.
\end{definition}

\begin{theorem}[\protect{\cite[Theorem 2.2.2]{COURAGE}}]\label{theorem. prime decomp. of ONI}
    Let $G = (V,E)$ be a graph, let $\TD(G)$ be the set of all TD-sets in $G$, and let $\MTD(G)$ be the set of all minimal TD-sets in $G$.
    Then the primary decomposition of $\calN(G)$ is given by
    $$
    \calN(G) = \bigcap_{S \in \TD(G)} \gen{S} = \bigcap_{S \in \MTD(G)}\gen{S},
    $$
    where the second decomposition is irredundant.
    Suppose that $G$ is a balanced tree.
    Let $\OTD(G)$ and $\MOTD(G)$ be the sets of all odd-TD-sets and minimal odd-TD-sets in $G$, respectively.
    Then
    $$
    \calN_{odd}(G) = \bigcap_{S \in \OTD(G)} \gen{S} = \bigcap_{S \in \MOTD(G)}\gen{S},
    $$
    where the second decomposition is irredundant.
\end{theorem}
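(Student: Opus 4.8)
The plan is to pass through the Stanley--Reisner correspondence and reduce the statement to the combinatorial fact that the prime $\gen{S}$ contains $\calN(G)$ precisely when $S$ is a total dominating set. First I would set $\Delta := \Delta_{\calN(G)}$, so that $\calN(G) = I_\Delta$ by the correspondence recalled above, and then invoke the primary decomposition $I_\Delta = \bigcap_{F \in \mathcal{F}(\Delta)}\gen{V\setminus F}$ already stated in this section. The whole theorem then follows once I identify the faces (and facets) of $\Delta$ with complements of TD-sets (and minimal TD-sets).

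Second, the heart is a purely set-theoretic equivalence. Since $\calN(G)$ is a monomial ideal, a square-free monomial $X_F$ lies in $\calN(G)$ iff it is divisible by some generator $X_{N(v)}$, i.e. iff $N(v) \subseteq F$ for some $v \in V$. Hence $F$ is a face of $\Delta$ iff $N(v)\not\subseteq F$ for all $v$, i.e. $N(v)\cap(V\setminus F)\neq\emptyset$ for every $v$. Writing $S := V\setminus F$ and using that adjacency is symmetric, so that $u\in N(v)\iff v\in N(u)$, one gets the chain $S\cap N(v)\neq\emptyset \iff \exists\,u\in S:\ v\in N(u)\iff v\in N(S)$. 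Therefore $F$ is a face of $\Delta$ exactly when $V\subseteq N(S)$, and since always $N(S)\subseteq V$, exactly when $N(S)=V$, i.e. when $S=V\setminus F\in\TD(G)$.

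Third, taking complements is an inclusion-reversing bijection, so the facets of $\Delta$ correspond to the minimal members of $\TD(G)$, namely $\MTD(G)$; substituting into the stated decomposition of $I_\Delta$ yields both equalities, and the irredundancy of the facet decomposition transfers to irredundancy over $\MTD(G)$. For the odd-open case I would run the same argument with $V_{odd}$ in place of $V$: for a balanced tree, adjacent vertices differ in height by exactly one (their heights differ by at most one as min-distances to a leaf, and equality is excluded by balancedness), so $N(v)\subseteq V_{even}$ for each $v\in V_{odd}$ and the generators of $\calN_{odd}(G)$ live in $\kk[V_{even}]$. The same equivalence then shows $\gen{S}\supseteq\calN_{odd}(G)$ iff $V_{odd}\subseteq N(S)$, and since a minimal such $S$ can only use even vertices (an odd vertex of $S$ meets no $N(v)$ with $v\in V_{odd}$), we get $N(S)=V_{odd}$, i.e. $S$ is an odd-TD-set; the facet/minimal correspondence and irredundancy then follow verbatim.

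The step I expect to require the most care is not the algebra but pinning down the two minimality/irredundancy transfers and, in the odd case, the claim that minimal odd-transversals avoid $V_{odd}$, so that the containment $V_{odd}\subseteq N(S)$ sharpens to the defining equality $N(S)=V_{odd}$; this is where the balanced hypothesis, via the height-parity observation, is genuinely used. Everything else is the standard dictionary between square-free monomial ideals, their Stanley--Reisner complexes, and minimal transversals of the generating supports.
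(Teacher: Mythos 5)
Your proposal is correct, and it is essentially the argument the paper itself presupposes: the theorem is imported from \cite{COURAGE} without an in-paper proof, but the surrounding discussion (identifying $\calN(G)$ with the Stanley--Reisner ideal of the stable complex) is exactly your dictionary --- faces of $\Delta_{\calN(G)}$ are the complements of TD-sets, facets correspond under the inclusion-reversing bijection to minimal TD-sets, and irredundancy of the facet decomposition transfers to $\MTD(G)$. Your treatment of the odd case is also sound, and it isolates precisely the step where balancedness is needed: adjacent vertices have heights differing by exactly one, so a minimal $S$ with $V_{odd}\subseteq N(S)$ contains no odd-height vertices, whence $N(S)\subseteq V_{odd}$ by parity and the containment sharpens to the defining equality $N(S)=V_{odd}$.
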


\begin{example}\label{ex.4}
    Consider the trees $T$, $T'$, and $T''$ from Example~\ref{ex.3}.
    The sets of minimal odd-TD-sets of $T'$ and $T''$ are given by
    \begin{align*}
        \MOTD(T') &= \set{\set{u_1,u_2}, \set{\ell_1,u_2}, \set{u_1,\ell_2}},\\
        \MOTD(T'') &= \set{\set{u_3,u_4,u_5}, \set{\ell_3,u_4,u_5}, \set{u_3,\ell_4,u_5}, \set{u_3,u_4,\ell_5}, \set{\ell_3,u_4,\ell_5}}.
    \end{align*}
    By Theorem~\ref{theorem. prime decomp. of ONI}, we obtain
    \begin{align*}
        \calN_{odd}(T') &= \gen{u_1,u_2}\cap \gen{\ell_1,u_2}\cap \gen{u_1,\ell_2},\\
        \calN_{odd}(T'') &= \gen{u_3,u_4,u_5}\cap \gen{\ell_3,u_4,u_5}\cap\gen{u_3,\ell_4,u_5} \cap \gen{u_3,u_4,\ell_5} \cap \gen{\ell_3,u_4,\ell_5}.
    \end{align*}
    Furthermore, applying Theorem~\ref{theorem. sum of ONI}(3), we have
    \begin{align*}
        \calN(T) &= \bigcap_{S' \in \MOTD(T')}\left( \bigcap_{S'' \in \MOTD(T'')} \left(\gen{S'} + \gen{S''} + \gen{V_1(T)}\right)\right)\\
        &= \bigcap_{S' \in \MOTD(T')}\left( \bigcap_{S'' \in \MOTD(T'')} \left(\gen{S'} + \gen{S''} + \gen{s_1,s_2,s_3,s_4,s_5}\right)\right).
    \end{align*}
    In other words, every minimal TD-set of $T$ can be expressed as a disjoint union $S' \cup S'' \cup V_1(T)$, where $S' \in \MOTD(T')$ and $S'' \in \MOTD(T'')$.
\end{example}

\begin{definition}\label{def. even ruled complex of delta trees}
    Let $G = (V,E)$ be a graph.
    The \emph{stable complex} of $G$ is the simplicial complex
    $$
    \calS(G) := \Set{V \setminus D}{D\mbox{ is a TD-set in }G} = \gen{V \setminus D \mid D\mbox{ is a minimal TD-set in }G}.
    $$
    Suppose that $G$ is a balanced tree.
    The \emph{even-stable complex} of $G$ is the simplicial complex on $V_{even}$ defined by
    $$
    \calS_{even}(G) := \gen{V_{even}\setminus D \mid D \mbox{ is a minimal odd-TD-set of }G}.
    $$
\end{definition}

One can verify that $I_{\calS(G)} = \calN(G)$ and $I_{\calS_{even}(G)} = \calN_{odd}(G)$ by comparing the primary decomposition of Stanley--Reisner ideals with Theorem~\ref{theorem. prime decomp. of ONI}.
Using Theorem~\ref{theorem. sum of ONI} and Definition~\ref{def. even ruled complex of delta trees}, we obtain the following corollary.

\begin{corollary}[\protect{\cite[Theorem 4.1.9]{COURAGE}}]\label{cor. SC is join of two even SC}
    Let $T = (V,E)$ be a tree, and let $T'$ and $T''$ be subgraphs of $T$ satisfying the conditions in Theorem~\ref{theorem. sum of ONI}.
    Then
    $$
    \calS(T) = \calS_{even}(T') \ast \calS_{even}(T'').
    $$
\end{corollary}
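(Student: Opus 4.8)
The plan is to verify the equality face by face, using the Stanley--Reisner correspondence to translate membership in the complexes into non-membership of monomials in the corresponding ideals. Concretely, for $S \subseteq V$ I would show that $S \in \calS(T)$ if and only if $S$ decomposes as $S = S' \sqcup S''$ with $S' \in \calS_{even}(T')$ and $S'' \in \calS_{even}(T'')$; by the definition of the join, this is exactly the assertion $\calS(T) = \calS_{even}(T') \ast \calS_{even}(T'')$.

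First I would record the two ingredients supplied by the preliminaries: $I_{\calS(T)} = \calN(T)$ and $I_{\calS_{even}(T')} = \calN_{odd}(T')$ (and likewise for $T''$), so that $S \in \calS(T) \iff X_S \notin \calN(T)$ and $S' \in \calS_{even}(T') \iff X_{S'} \notin \calN_{odd}(T')$. Next I would invoke Theorem~\ref{theorem. sum of ONI}(3) to write $\calN(T) = \calN_{odd}(T') + \calN_{odd}(T'') + \gen{V_1(T)}$. Since these are monomial ideals, the monomial $X_S$ lies in the sum precisely when it lies in one of the three summands; hence $X_S \notin \calN(T)$ if and only if $X_S \notin \calN_{odd}(T')$, $X_S \notin \calN_{odd}(T'')$, and $X_S \notin \gen{V_1(T)}$ all hold.

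The three conditions then unpack cleanly. The condition $X_S \notin \gen{V_1(T)}$ says exactly that $S \cap V_1(T) = \emptyset$, i.e. $S \subseteq V_{even}(T') \sqcup V_{even}(T'')$ by Theorem~\ref{theorem. sum of ONI}(2); this lets me set $S' := S \cap V_{even}(T')$ and $S'' := S \cap V_{even}(T'')$ with $S = S' \sqcup S''$. Because $T'$ is a balanced forest, every generator $X_{N(v)}$ of $\calN_{odd}(T')$ with $v \in V_{odd}(T')$ has support $N(v) \subseteq V_{even}(T')$; since $X_S = X_{S'} X_{S''}$ and such a generator shares no variable with $X_{S''}$, it divides $X_S$ if and only if it divides $X_{S'}$. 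Thus $X_S \notin \calN_{odd}(T') \iff X_{S'} \notin \calN_{odd}(T') \iff S' \in \calS_{even}(T')$, and symmetrically $X_S \notin \calN_{odd}(T'') \iff S'' \in \calS_{even}(T'')$. Chaining these equivalences yields the desired characterization of $\calS(T)$.

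The one point that requires care---and where I expect the only genuine friction---is the mismatch of ambient vertex sets: $\calS(T)$ is a complex on all of $V$, whereas the join lives on $V_{even}(T') \sqcup V_{even}(T'') = V \setminus V_1(T)$. This is resolved precisely by the $\gen{V_1(T)}$ summand: its presence forces $X_v \in \calN(T)$ for each $v \in V_1(T)$, so no vertex of $V_1(T)$ lies in any face of $\calS(T)$, and $\calS(T)$ may be regarded as a complex on $V \setminus V_1(T)$ without discarding any face. Once this bookkeeping is made explicit, the argument above applies verbatim. Alternatively, one could argue at the level of facets, using the decomposition of each minimal TD-set of $T$ as $D' \sqcup D'' \sqcup V_1(T)$ with $D' \in \MOTD(T')$ and $D'' \in \MOTD(T'')$ (from Theorem~\ref{theorem. prime decomp. of ONI} together with Theorem~\ref{theorem. sum of ONI}), and noting that the facets of a join are the unions of facets of the two factors; but the monomial argument handles the vertex-set bookkeeping more transparently.
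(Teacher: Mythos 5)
Your proof is correct, and it is essentially the derivation the paper intends: the paper gives no written argument (it cites \cite[Theorem 4.1.9]{COURAGE} and remarks that the corollary follows from Theorem~\ref{theorem. sum of ONI} and Definition~\ref{def. even ruled complex of delta trees}), with its Examples~\ref{ex.4} and~\ref{ex. 5} hinting at the facet-level version via the decomposition of minimal TD-sets as $S' \sqcup S'' \sqcup V_1(T)$. Your monomial-level verification through the Stanley--Reisner correspondence is the same argument phrased on all faces rather than on facets, and you correctly handle the only delicate point (the ghost vertices $V_1(T)$, which lie in no face of $\calS(T)$ because each is a generator of $\calN(T)$); your closing facet-level alternative is precisely the paper's indicated route.
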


\begin{example}\label{ex. 5}
    Let $T$, $T'$, and $T''$ be the trees from Example~\ref{ex.3}.
    From Example~\ref{ex.4}, we can compute the facets of the even-stable complexes of $T'$ and $T''$:
    \begin{align*}
        \calS_{even}(T') &= \gen{\set{r_2,r_3,\ell_1,\ell_2},\set{r_2,r_3,u_1,\ell_2},\set{r_2,r_3,\ell_1,u_2}},\\
        \calS_{even}(T'') &= \gen{\set{r_1,\ell_3,\ell_4,\ell_5},\set{r_1,u_3,\ell_4,\ell_5},\set{r_1,\ell_3,u_4,\ell_5},\set{r_1,\ell_3,\ell_4,u_5},\set{r_1,u_3,\ell_4,u_5}}.
    \end{align*}
    Notice that the vertices $r_1$, $r_2$, and $r_3$ appear in every facet because $\hh_{T'}(r_2) = \hh_{T'}(r_3) = 0$ and $\hh_{T''}(r_1) = 0$ are even.
    The observation regarding the minimal TD-sets of $T$ at the end of Example~\ref{ex.4} implies that the stable complex of $T$ is the join of the even-stable complexes of $T'$ and $T''$.
\end{example}

We note that the stable complex of a tree $T$ (respectively, the even-stable complex of a balanced tree $T$) is pure if and only if $T$ is TD-unmixed (respectively, odd-TD-unmixed).
Therefore, by Theorem~\ref{thm. join is VD} and Corollary~\ref{cor. SC is join of two even SC}, to show that every stable complex of a TD-unmixed tree is vertex decomposable, it suffices to show that every even-stable complex of an odd-TD-unmixed balanced tree is vertex decomposable.
Moreover, we only need to consider TD-unmixed balanced trees, as justified by the following theorem.

\begin{theorem}[\protect{\cite[Corollary 3.4.8]{COURAGE}}]\label{thm. char of unmixed trees}
    Let $T$ be a tree, and let $T'$ and $T''$ be subgraphs of $T$ satisfying the conditions in Theorem~\ref{theorem. sum of ONI}.
    Then $T$ is TD-unmixed if and only if $T'$ and $T''$ are TD-unmixed.
\end{theorem}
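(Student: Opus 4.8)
The plan is to reduce the statement to the join decomposition of the stable complex in Corollary~\ref{cor. SC is join of two even SC} together with the dictionary between unmixedness and purity recorded immediately before the theorem. First I would translate each side of the desired equivalence into a purity statement: $T$ is TD-unmixed if and only if $\calS(T)$ is pure, while a balanced forest is odd-TD-unmixed if and only if its even-stable complex is pure. Since $T'$ and $T''$ are balanced \emph{forests}, the notion relevant to them is odd-TD-unmixedness, which governs the complexes $\calS_{even}(T')$ and $\calS_{even}(T'')$; the purity observation stated for balanced trees applies verbatim to forests. By Corollary~\ref{cor. SC is join of two even SC} we have $\calS(T) = \calS_{even}(T') \ast \calS_{even}(T'')$, so the theorem becomes the purely combinatorial assertion that a join of two nonempty simplicial complexes on disjoint vertex sets is pure if and only if both factors are pure.

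Next I would establish this assertion about joins. Because the vertex sets are disjoint, the facets of $\Delta' \ast \Delta''$ are exactly the disjoint unions $F' \sqcup F''$ with $F'$ a facet of $\Delta'$ and $F''$ a facet of $\Delta''$, so that $\dim(F' \sqcup F'') = \dim F' + \dim F'' + 1$. If both factors are pure, the right-hand side is constant and the join is pure. For the converse, assuming the join is pure, I would fix a facet $F''_0$ of $\Delta''$; then $\dim F' = \bigl(\dim(F' \sqcup F''_0)\bigr) - \dim F''_0 - 1$ is independent of the facet $F'$, so $\Delta'$ is pure, and symmetrically $\Delta''$ is pure. The one hypothesis needed for the ``fix a facet'' step is that each factor is nonempty, and this is where I expect the only real (if mild) obstacle to lie: I must check that $\calS_{even}(T')$ and $\calS_{even}(T'')$ each have a facet. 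This follows from the balanced-forest structure, since in a balanced forest any two adjacent vertices have heights differing by exactly one (in a tree adjacent heights differ by at most one, and the balanced condition forbids equality), so every edge joins an even-height vertex to an odd-height vertex. Hence $N(V_{even}) = V_{odd}$, meaning $V_{even}$ is itself an odd-TD-set, and the associated even-stable complex is nonempty. Applying the join criterion with $\Delta' = \calS_{even}(T')$ and $\Delta'' = \calS_{even}(T'')$ and unwinding the purity dictionary then yields both directions of the theorem.

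Finally, I would record, as a remark, an equivalent argument that bypasses simplicial complexes and works directly with dominating sets, since it makes transparent why the independence of the two factors is exactly what forces each to be unmixed. By Theorem~\ref{theorem. sum of ONI} and the computation in Example~\ref{ex.4}, every minimal TD-set of $T$ has the form $D = S' \sqcup S'' \sqcup V_1(T)$ with $S' \in \MOTD(T')$ and $S'' \in \MOTD(T'')$ ranging independently, so $|D| = |S'| + |S''| + |V_1(T)|$. Thus the set of sizes of minimal TD-sets of $T$ is a fixed shift of the sumset of the size-sets of $\MOTD(T')$ and $\MOTD(T'')$; and a sumset of two nonempty sets of integers is a singleton precisely when both summand sets are singletons. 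This is exactly the unmixedness equivalence, and it is the step I regard as the conceptual heart of the proof.
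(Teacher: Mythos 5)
The paper never proves this statement---it is imported wholesale from \cite[Corollary 3.4.8]{COURAGE}---so there is no in-paper proof to compare against; your proposal must be judged on its own. Most of your machinery is sound: the purity dictionary, the join decomposition of Corollary~\ref{cor. SC is join of two even SC}, your lemma that a join of complexes on disjoint vertex sets (each having at least one facet) is pure if and only if both factors are pure, and the sumset argument combining Theorem~\ref{theorem. sum of ONI} with Theorem~\ref{theorem. prime decomp. of ONI} are all correct. But what both of your arguments actually prove is: $T$ is TD-unmixed if and only if $T'$ and $T''$ are \emph{odd}-TD-unmixed. The theorem asserts that $T'$ and $T''$ are \emph{TD}-unmixed, and you pass between the two notions with no justification (``the notion relevant to them is odd-TD-unmixedness''). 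That elision is a genuine gap, and it sits exactly at the subtle point of the statement.

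The two notions are not interchangeable for free. In a balanced forest every edge joins an even-height vertex to an odd-height vertex (as you yourself note), so a TD-set $S$ splits as $(S \cap V_{even}) \sqcup (S \cap V_{odd})$, where the even part must dominate $V_{odd}$ and the odd part must dominate $V_{even}$; minimal TD-sets are thus disjoint unions of a minimal odd-TD-set with a minimal set $S' \subseteq V_{odd}$ satisfying $N(S') = V_{even}$. Hence TD-unmixedness of $T'$ also constrains this second, ``even'' family, which your proof never controls: odd-TD-unmixedness gives one direction cheaply but not the other. Worse, the forests $T'$ and $T''$ typically contain isolated vertices (e.g., $r_2, r_3$ in Example~\ref{ex.3}), which cannot be totally dominated, so these forests may have no TD-sets at all, making the literal reading of their TD-unmixedness vacuous---a degeneracy your argument never confronts. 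Finally, note that the paper states, immediately after the theorem, that the equivalence ``odd-TD-unmixed $\iff$ TD-unmixed for balanced trees'' is a \emph{consequence} of this theorem (citing \cite[Section 3]{COURAGE}), so you cannot assume that equivalence as an input without circularity. To close the gap you would need to show directly that, for the balanced forests arising in Theorem~\ref{theorem. sum of ONI}, odd-TD-unmixedness forces the even family to be unmixed as well (e.g., that the height-one vertices form the unique minimal such set); this in turn requires the height bound $\hh \leq 3$ of Theorem~\ref{theorem. char. of unmixed delt. trees}, which is itself part of the unmixedness theory being established.
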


It implies that a balanced tree is odd-TD-unmixed if and only if it is TD-unmixed (see \cite[Section 3]{COURAGE}).
The following theorem gives a descriptive characterization of TD-unmixed balanced trees.

\begin{theorem}[\protect{\cite[Theorem 3.3.11]{COURAGE}}]
\label{theorem. char. of unmixed delt. trees}
    Let $T$ be a balanced tree.
    Then $T$ is TD-unmixed if and only if
    \begin{enumerate}
        \item $\hh(T) \leq 3$;
        \item for all $v \in V_2$, $|N(v) \cap V_1| = 1$; and
        \item for all $v \in V_1$, $|N(v) \cap V_2| \leq 1$.
    \end{enumerate}
    In particular, Condition \textit{(3)} becomes an equality if $\hh(T) = 3$.
\end{theorem}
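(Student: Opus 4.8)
\emph{Reduction to odd total dominating sets.} The plan is to work with $\MOTD(T)$ rather than with minimal total dominating sets: since a balanced tree is TD-unmixed if and only if it is odd-TD-unmixed (as recorded after Theorem~\ref{thm. char of unmixed trees}), it suffices to show that all minimal odd-TD-sets have the same size under Conditions~(1)--(3), and that some two of them differ in size otherwise. The first step is a concrete description of odd-TD-sets. If $N(S)=V_{odd}$ then no vertex of $S$ may have an even neighbor, which forces $S\subseteq V_{even}$; conversely any $S\subseteq V_{even}$ satisfies $N(S)\subseteq V_{odd}$ automatically, because the neighbors of a leaf and of a height-$2$ vertex are all odd. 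Thus an odd-TD-set is exactly a set $A\subseteq V_{even}$ such that every vertex of $V_{odd}$ has a neighbor in $A$; when $\hh(T)\le 3$ this means $A\subseteq V_0\cup V_2$ with (i) every $v\in V_1$ having a neighbor in $A$, and (ii) every $w\in V_3$ having a (necessarily height-$2$) neighbor in $A$. I would also record the elementary fact that every height-$2$ vertex has at least one height-$1$ neighbor, namely the first vertex on a shortest path to a nearest leaf; hence Condition~(2) is equivalent to the upper bound $|N(v)\cap V_1|\le 1$, and its failure means some height-$2$ vertex has two height-$1$ neighbors.

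\emph{The ``if'' direction.} Assume Conditions~(1)--(3). When $\hh(T)=3$, Condition~(2) together with the equality form of Condition~(3) makes the assignment $\phi\colon V_1\to V_2$ sending a height-$1$ vertex to its unique height-$2$ neighbor a bijection; the equality form is automatic from connectivity, since a height-$1$ vertex all of whose neighbors are leaves would span a connected component of height $1$, contradicting $\hh(T)=3$ (this also proves the ``in particular'' clause). I would then show by double counting that every minimal cover $A$ has size exactly $|V_1|$. Writing $B\subseteq V_1$ for the vertices covered in $A$ by a leaf, minimality forces each such leaf to be the unique cover of its height-$1$ neighbor, so $\phi(B)\cap A=\varnothing$ and $|A\cap V_0|=|B|$; every $v\in V_1\setminus B$ is covered by $\phi(v)$, so $A\cap V_2=\phi(V_1\setminus B)$, whence $|A|=|B|+|V_1\setminus B|=|V_1|$. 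The cases $\hh(T)\le 2$ are the same computation with $V_3=\varnothing$: each cover element meets exactly one height-$1$ vertex, so every minimal cover again has size $|V_1|$, and $\calN_{odd}(T)$ is unmixed.

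\emph{The ``only if'' direction.} Here I argue the contrapositive: if any condition fails, I construct two minimal covers of different sizes. The common mechanism is that a single even vertex can discharge two independent covering demands at once (``double duty''), while the same demands can instead be met by two separate vertices (``splitting''). If Condition~(2) fails, a height-$2$ vertex $u$ with two height-$1$ neighbors $v_1,v_2$ covers both simultaneously, whereas $v_1,v_2$ may instead be covered by two distinct leaves; completing each choice to a minimal cover yields sizes differing by one. If Condition~(3) fails, a height-$1$ vertex with two height-$2$ neighbors gives the symmetric flexibility. If Condition~(1) fails, then $V_4\neq\varnothing$, and along a shortest leaf-path $p_4-p_3-p_2-p_1-p_0$ the vertex $p_2$ can cover the two odd vertices $p_1,p_3$ at once, while $p_1$ and $p_3$ can instead be covered by $p_0$ and by $p_4$; this is precisely the double-duty-versus-splitting move one level higher.

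\emph{Main obstacle.} The hard part is the ``only if'' direction, specifically turning each local reconfiguration into a pair of genuinely inclusion-minimal covers whose sizes are provably different. The difficulty is bookkeeping: after a split, the even vertex removed may also have been covering height-$3$ vertices, so one must re-cover those without re-covering $v_1,v_2$ and without introducing redundancies that destroy minimality. I would handle this by fixing a canonical minimal cover away from the local gadget (a greedy choice processed by increasing height) and proving that the two local configurations extend it with a net size difference of exactly one; minimality then reduces to checking that each chosen vertex is the unique cover of some odd vertex, which is a local verification. The height-$\ge 4$ case requires the most care, since the two ways of covering $p_3$ propagate along the path, so I expect to treat it by induction on $\hh(T)$, peeling off the top level.
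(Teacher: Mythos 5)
This theorem is imported from \cite{COURAGE} without proof, so there is no argument in the paper to compare yours against; I can only judge the proposal on its own terms. Your ``if'' direction is correct and essentially complete: the identification of odd-TD-sets with subsets $A \subseteq V_{even}$ dominating $V_{odd}$, the observation that every height-$2$ vertex automatically has a height-$1$ neighbor, the connectivity argument making $\phi \colon V_1 \to V_2$ a well-defined bijection when $\hh(T) = 3$ (which also disposes of the ``in particular'' clause), and the count $|A| = |B| + |V_1 \setminus B| = |V_1|$ for every minimal cover all hold up. One caveat: your opening reduction invokes ``TD-unmixed iff odd-TD-unmixed,'' which this paper records as a consequence of Theorem~\ref{thm. char of unmixed trees}, i.e.\ of \cite[Corollary~3.4.8]{COURAGE} --- a result that in the source comes after, and plausibly depends on, the very statement you are proving. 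The bridge should be proved directly, and it is short: in a balanced tree every edge joins vertices of opposite height parity, so every minimal TD-set splits uniquely as $A \sqcup S$ with $A \subseteq V_{even}$ a minimal odd-TD-set and $S \subseteq V_{odd}$ a minimal set dominating $V_{even}$; under Condition~(1) the only minimal such $S$ is $V_1$ itself (each leaf forces its unique neighbor into $S$, and $V_1$ already dominates $V_0 \cup V_2$), while for the converse direction one appends a single fixed minimal $S$ to two odd-TD-sets of different sizes.

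The genuine gap is the ``only if'' direction, and you flag it yourself: the double-duty-versus-splitting gadgets are the right local picture, but the assertion ``completing each choice to a minimal cover yields sizes differing by one'' is never proved, and it is false for naive completions. The rest of the tree is attached to the gadget, not disjoint from it, so a completion can legitimately reuse $u$ (or, if Condition~(3) also fails, another height-$2$ neighbor of $v_1,v_2$) and pruning then deletes $\ell_1,\ell_2$. Concretely, in the balanced tree with edges $\ell_1v_1$, $v_1u$, $uv_2$, $v_2\ell_2$, $ur$, $ru'$, $u'v_3$, $v_3\ell_3$, completing the split configuration $\set{\ell_1,\ell_2}$ by covering $r$ with $u$ collapses, after pruning, to the size-$2$ cover $\set{u,\ell_3}$ or $\set{u,u'}$; one must cover $r$ by $u'$, and proving that such non-interfering choices always exist and survive the minimality check is exactly the missing content. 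Moreover your three cases are not independent (Conditions~(2) and~(3) can fail simultaneously, which is precisely when the interference above worsens), so they must be ordered and the standing hypotheses stated, and the induction for $\hh(T) \geq 4$ is only named, not set up. The gap is closable, but by explicit exchange pairs rather than prune-to-minimal completions. For example, in the case where Conditions~(1) and~(3) hold and (2) fails at $u$ with $t := |N(u) \cap V_1| \geq 2$: Condition~(3) makes $V_2$ itself a minimal odd-TD-set, and
$$
(V_2 \setminus \set{u}) \cup \Set{\ell_v}{v \in N(u) \cap V_1},
$$
with one leaf $\ell_v$ chosen per neighbor, is a second minimal odd-TD-set of size $|V_2| + t - 1 > |V_2|$; both minimality checks are immediate because under Condition~(3) each height-$2$ vertex is the unique height-$2$ neighbor of each of its height-$1$ neighbors. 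Analogous explicit pairs are what the cases ``(3) fails'' and ``$\hh(T) \geq 4$'' still require; until they are supplied, the hard direction of the theorem remains unproven.
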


\begin{example}\label{ex. unmixed delta trees}
    By Condition~\textit{(2)} of Theorem~\ref{theorem. char. of unmixed delt. trees}, no TD-unmixed balanced tree of height 2 exists.
    Hence, a TD-unmixed balanced tree is either an isolated vertex, a \emph{star graph} with at least two leaves (as shown in Figure~\ref{fig: fig 5}), or a TD-unmixed balanced tree of height 3.
    \begin{figure}[ht]
        \centering
        \includegraphics[width=0.25\linewidth]{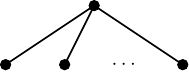}
        \caption{Star graph with at least 2 leaves}
        \label{fig: fig 5}
    \end{figure}
\end{example}

A constructive characterization of TD-unmixed balanced trees of height 3 is given in Theorem~\ref{thm. constructing hh = 3 delt. trees}.

\begin{notation}\label{not. path graph}
    For every $n \in \mathbb{N}_0$, let $P_n$ denote the path graph with vertex set $V(P_n) = [n]_0$ and edge set $E(P_n) = \set{\set{0,1},\set{1,2},\dots,\set{n-1,n}}$.
    Given a graph $G = (V,E)$ and a positive integer $k$, a sequence of distinct vertices $v_0,v_1,\dots,v_k \in V$ is called a \emph{path of length $k$} in $G$ if $\set{v_i,v_{i+1}} \in E$ for all $i \in [k - 1]_0$. We denote such a path by $v_0v_1\cdots v_k$.
\end{notation}

We first define an operation on balanced trees.

\begin{definition}
    Let $G_1 = (V',E')$ and $G_2 = (V'',E'')$ be graphs with disjoint vertex sets, and let $e = \set{v',v''}$ where $v' \in V'$ and $v'' \in V''$.
    The \emph{edge join} of $G_1$ and $G_2$ along $e$ is the graph
    $$
    G_1 +_e G_2 := (V'\cup V'', E' \cup E'' \cup \set{\set{v',v''}}).
    $$
    Let $T = (V,E)$ be a balanced tree of height 3, and let $v \in V\setminus V_0$.
    We define the function $\mathcal{O}$ as follows:
    \begin{enumerate}
        \item If $\hh(v) = 1$, then $\mathcal{O}(T,v) := T +_{\set{v,0}} P_0$.
        \item If $\hh(v) = 2$, then $\mathcal{O}(T,v) := T +_{\set{v,3}} P_3$.
        \item If $\hh(v) = 3$, then $\mathcal{O}(T,v) := T +_{\set{v,2}} P_2.$
    \end{enumerate}
    For $v_1 \in V(T)$ and $v_2 \in V(\mathcal{O}(T,v_1))$, we denote
    $$
    \mathcal{O}(T,(v_1,v_2)) := \mathcal{O}(\mathcal{O}(T,v_1),v_2).
    $$
    Inductively, for $n \geq 3$, if $v_i \in V(\mathcal{O}(T,(v_1,\dots,v_{i-1})))$ for all $2 \leq i \leq n$, we define
    $$
    \mathcal{O}(T,(v_1,\dots,v_n)) := \mathcal{O}(\mathcal{O}(T,(v_1,\dots,v_{n-1})),v_n).
    $$
\end{definition}

\begin{remark}
    When applying $\mathcal{O}$ to a graph $T$ with $v \in V(T)$, we must relabel the vertices of $T' := \mathcal{O}(T,v)$ so that $V(T') \cap \mathbb{N}_0 = \emptyset$. Since the vertices of $P_k$ are denoted by natural numbers, this relabeling ensures there are no two vertices with the same label.
    We assume that this procedure is performed automatically whenever $\mathcal{O}$ is applied.
\end{remark}

\begin{theorem}[\protect{\cite{COURAGE}}]\label{thm. constructing hh = 3 delt. trees}
    Let $T$ be an unmixed balanced tree of height 3.
    Then either $T \cong P_6$, or there exists a sequence of vertices $v_1,\dots,v_k$ such that $T \cong \mathcal{O}(P_6,(v_1,\dots,v_k))$.
\end{theorem}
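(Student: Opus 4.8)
The plan is to argue by induction on $|V(T)|$, using the descriptive characterization in Theorem~\ref{theorem. char. of unmixed delt. trees} to pin down the local shape of $T$, and exhibiting for each $T \not\cong P_6$ a \emph{reverse} of one of the three cases of $\mathcal{O}$ that produces a strictly smaller unmixed balanced tree of height~$3$.

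First I would record the rigid structure forced by Theorem~\ref{theorem. char. of unmixed delt. trees} when $\hh(T) = 3$: every vertex of $V_3$ is adjacent only to vertices of $V_2$ and has degree at least $2$ (otherwise it would be a leaf); every vertex of $V_2$ is adjacent to exactly one vertex of $V_1$, to no leaf, and hence to at least one vertex of $V_3$; and every vertex of $V_1$ is adjacent to exactly one vertex of $V_2$ and to at least one leaf. It is convenient to pass to the subtree $\overline{T}$ induced on $V_2 \cup V_3$, obtained by deleting all pendant ``tails'' $\ell - w - u$ with $\ell \in V_0$, $w \in V_1$, $u \in V_2$. Since these tails are pendant, $\overline{T}$ is a tree in which every $V_3$-vertex has $\deg_{\overline{T}} \geq 2$ and every $V_2$-vertex has $\deg_{\overline{T}} \geq 1$; in particular every leaf of $\overline{T}$ lies in $V_2$. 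Counting vertices along the arms emanating from a fixed $V_3$-vertex then shows that the smallest such $T$ has exactly $7$ vertices and must be $P_6$, giving the base case.

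For the inductive step I would assume $T \not\cong P_6$ and split into cases. If some $w \in V_1$ is adjacent to two or more leaves, delete one such leaf; the result $T_0$ is again unmixed balanced of height $3$ and $T \cong \mathcal{O}(T_0, w)$ via case~(1). Otherwise every $V_1$-vertex has exactly one leaf, and I would choose a leaf $u$ of $\overline{T}$ with unique $\overline{T}$-neighbor $r \in V_3$. If $\deg_{\overline{T}}(r) \geq 3$, delete the pendant arm $r - u - w - \ell$ at $u$: this reverses case~(3), and $T \cong \mathcal{O}(T_0, r)$. If $\deg_{\overline{T}}(r) = 2$ with other neighbor $u'$, then either $\deg_{\overline{T}}(u') = 1$, which forces $\overline{T} = u - r - u'$ and hence $T \cong P_6$ (excluded), or $\deg_{\overline{T}}(u') \geq 2$, in which case I delete the pendant branch $r - u - w - \ell$ leaving $u'$ in place: this reverses case~(2), and $T \cong \mathcal{O}(T_0, u')$. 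Feeding $T_0$ into the induction hypothesis and appending the relevant vertex to the witnessing sequence completes the argument.

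The delicate point --- and the step I expect to require the most care --- is verifying that each deletion lands back in the class of unmixed balanced trees of height~$3$. The conditions of Theorem~\ref{theorem. char. of unmixed delt. trees} constraining the $V_1$- and $V_2$-neighbor counts are local and are visibly inherited by the surviving vertices, so the real content is that the \emph{heights} of surviving vertices are unchanged and that $V_3$ remains nonempty. Because the removed piece is always a pendant branch containing a single leaf, and every surviving vertex reaches that leaf only through the attaching vertex ($w$, $r$, or $u'$), one checks that this distance is never the minimizing one for a surviving vertex: each retains a closer leaf --- through its own pendant tail in the cases of $r$ and $u'$, and through a remaining leaf of $w$ in case~(1). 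Hence no height increases, and $\hh(T_0) = 3$ is retained since the attaching vertex still sits in a surviving $V_2$--$V_3$ configuration. Once this is established, matching the deleted branch ($1$, $3$, or $4$ vertices) against the path $P_0$, $P_2$, or $P_3$ attached by the corresponding case of $\mathcal{O}$ is immediate, and the relabeling convention makes the isomorphism $T \cong \mathcal{O}(T_0, v)$ literal.
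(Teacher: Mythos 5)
The paper does not actually prove this statement---it is imported verbatim from \cite{COURAGE}---so there is no in-paper argument to compare yours against; I can only assess your proposal on its own terms. It is correct. Your reverse-induction scheme is the natural one for constructive characterizations of this kind: the structure forced by Theorem~\ref{theorem. char. of unmixed delt. trees} (each $V_2$-vertex carries exactly one pendant $V_1$-tail, each $V_1$-vertex carries only leaves plus its unique $V_2$-neighbor, and the core $\overline{T}$ on $V_2 \cup V_3$ is a tree whose leaves all lie in $V_2$) makes your four cases exhaustive: a $V_1$-vertex with a second leaf reverses case~(1); a core-leaf $u$ whose $V_3$-neighbor $r$ has $\deg_{\overline{T}}(r) \geq 3$ reverses case~(3); $\deg_{\overline{T}}(r) = 2$ with the other neighbor $u'$ a core-leaf forces $T \cong P_6$; and $\deg_{\overline{T}}(u') \geq 2$ reverses case~(2). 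Each deletion is of a pendant branch, creates no new leaves (the attaching vertex keeps degree at least $2$), and preserves the local conditions (2) and (3) of Theorem~\ref{theorem. char. of unmixed delt. trees}, so Theorem~\ref{theorem. char. of unmixed delt. trees} returns unmixedness of $T_0$. One spot deserves more care than your wording gives it: in the case where the attaching vertex is $r \in V_3$, a surviving vertex does \emph{not} retain a ``closer leaf through $r$'s own pendant tail''---$r$, being in $V_3$, has no pendant tail---but rather a leaf at \emph{equal} distance through another $V_2$-branch of $r$; the hypothesis $\deg_{\overline{T}}(r) \geq 3$ is exactly what guarantees that at least one such branch avoids the path from the surviving vertex to $r$, so the minimum defining the height is unchanged. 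That is a one-line repair, not a gap, and with it your argument is complete.
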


\begin{example}\label{ex. 6}
    The balanced trees shown in Figure~\ref{fig: 6} are obtained by applying $\mathcal{O}$ to $P_6$ using vertices of heights 1, 2, and 3, respectively. The newly added vertices and edges are colored red.

    \begin{figure}[ht]
        \centering
        \includegraphics[width=1\linewidth]{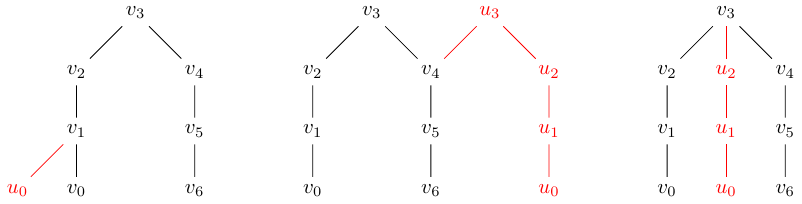}
        \caption{Left to right: $\mathcal{O}(P_6,v_1)$, $\mathcal{O}(P_6,v_4)$, and $\mathcal{O}(P_6,v_3)$}
        \label{fig: 6}
    \end{figure}
\end{example}
\section{Geometrically vertex decomposability and even-stable complexes}\label{sec. geom. vert. decomp. and EScomp}

In this section, we define geometrically vertex decomposable ideals, which is the main tool we use to show that the stable complexes of TD-unmixed trees are vertex decomposable.

\begin{definition}[\protect{\cite[Definition 2.3, 2.6]{GVD_KR}}]\label{def. GVD}
    Let $I \subset R := \kk[x_1,\dots,x_n]$ be a square-free monomial ideal generated by the monomials $m_1,\dots,m_k$.
    For an indeterminate $y = x_i$, define
    $$
    N_{y,I} := \gen{m_j\ :\ y \nmid m_j} \quad \text{and} \quad C_{y,I} := \gen{\frac{m_j}{y}\ :\ y \mid m_j} + N_{y,I}.
    $$
    We say $I$ is \emph{geometrically vertex decomposable} (GVD) if $I$ is unmixed (i.e., every minimal prime of $I$ has the same height), and if
    \begin{itemize}
        \item[(1)] $I = \gen{1}$ or $I$ is generated by indeterminates in $R$, or
        \item[(2)] there exists an indeterminate $y$ in $R$ such that $I = C_{y,I} \cap (N_{y,I} + \gen{y})$, and the ideals $C_{y,I}$ and $N_{y,I}$ are geometrically vertex decomposable in $R/\gen{y}$.
    \end{itemize}
\end{definition}

\begin{remark}
    The decomposition in (2) is called a \emph{geometric vertex decomposition} of $I$ with respect to $y$, first introduced in \cite{GVD_KMY}.
    GVD is defined for a more general class of ideals using Gr\"{o}bner geometry for liaison theory.
    However, for our purpose, we only define the square-free monomial ideal version of GVD.
\end{remark}

\begin{theorem}[\protect{\cite[Proposition 2.9]{GVD_KR}}]\label{theorem. GVD iff vert decomp}
    A pure simplicial complex $\Delta$ is vertex decomposable if and only if its Stanley--Reisner ideal $I_\Delta$ is geometrically vertex decomposable.
\end{theorem}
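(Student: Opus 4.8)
The plan is to set up a dictionary translating the combinatorial operations of Definition~\ref{def. vert decomp} into the algebraic operations of Definition~\ref{def. GVD}, and then prove both implications by a simultaneous induction on the number of vertices of $\Delta$ (equivalently, the number of variables of $R$). The heart of the argument is the following translation: for any variable $y = x_i$, regarding $N_{y,I_\Delta}$ and $C_{y,I_\Delta}$ as ideals in $R/\gen{y} = \kk[V\setminus\set{y}]$, one has $N_{y,I_\Delta} = I_{\del_\Delta(y)}$ and $C_{y,I_\Delta} = I_{\lk_\Delta(y)}$. Both identities I would verify through the Stanley--Reisner correspondence by comparing non-faces: for $F \subseteq V\setminus\set{y}$, one has $X_F \in C_{y,I_\Delta}$ if and only if $yX_F \in I_\Delta$, i.e. $F \cup \set{y} \notin \Delta$, which is exactly $F \notin \lk_\Delta(y)$; and $X_F \in N_{y,I_\Delta}$ if and only if $F \notin \Delta$, i.e. $F \notin \del_\Delta(y)$, using that any minimal non-face dividing $X_F$ automatically avoids $y$.

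Alongside this I would record three easy correspondences. The base cases match: $I_\Delta = \gen{1}$ exactly when $\Delta$ is the empty complex, and $I_\Delta$ is generated by indeterminates exactly when $\Delta$ is a simplex (the missing vertices being the generators). Purity matches unmixedness, since the minimal primes of $I_\Delta$ are the ideals $\gen{V\setminus F}$ over facets $F$, of height $|V|-|F|$, so $I_\Delta$ is unmixed if and only if all facets share a dimension. Finally, the geometric decomposition $I_\Delta = C_{y,I_\Delta} \cap (N_{y,I_\Delta} + \gen{y})$ holds automatically for every square-free monomial ideal and every $y$: this is a one-line check splitting a monomial according to whether it is divisible by a generator not involving $y$, so no hypothesis on $y$ is needed here.

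For the forward implication, suppose $\Delta$ is pure and vertex decomposable. If $\Delta$ is a simplex or empty we are in the base case; otherwise choose a shedding vertex $v$ with $\del_\Delta(v)$ and $\lk_\Delta(v)$ vertex decomposable. The link is automatically pure of dimension $\dim\Delta - 1$, and the shedding hypothesis forces every facet of $\del_\Delta(v)$ to be a facet of $\Delta$, so $\del_\Delta(v)$ is pure as well; by the translation lemma and the inductive hypothesis (fewer vertices) $C_{v,I_\Delta}$ and $N_{v,I_\Delta}$ are GVD in $R/\gen{v}$. Combined with the automatic decomposition and the unmixedness of $I_\Delta$, this shows $I_\Delta$ is GVD.

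The reverse implication is where the main difficulty lies, and it is the step I would treat most carefully. Given a variable $y$ realizing the GVD decomposition, the translation lemma and induction make $\del_\Delta(y)$ and $\lk_\Delta(y)$ vertex decomposable, but $y$ need not be a shedding vertex of $\Delta$: Definition~\ref{def. GVD} imposes no shedding-type condition, whereas Definition~\ref{def. vert decomp} does. The obstruction is exactly the degenerate case in which $y$ divides no minimal generator, i.e. $C_{y,I_\Delta} = N_{y,I_\Delta}$; combinatorially this means $y$ lies in every facet, so $\Delta = \gen{\set{y}} \ast \del_\Delta(y)$ is a cone. There I would not seek a shedding vertex but instead invoke Theorem~\ref{thm. join is VD}: a single vertex is vertex decomposable, so $\Delta$ is vertex decomposable precisely because $\del_\Delta(y)$ is. In the complementary case some minimal non-face contains $y$, so $\Delta$ has a facet avoiding $y$; then purity of $\Delta$ together with the unmixedness of $N_{y,I_\Delta}$ (forcing $\del_\Delta(y)$ to be pure of dimension $\dim\Delta$) lets me check directly that every facet of $\del_\Delta(y)$ is a facet of $\Delta$, so $y$ genuinely is a shedding vertex and Definition~\ref{def. vert decomp} applies. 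Isolating and correctly dispatching this cone-versus-shedding dichotomy is the one genuinely delicate point; everything else reduces to the bookkeeping of the translation lemma.
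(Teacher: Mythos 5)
The paper does not actually prove this statement: it is imported verbatim from Klein--Rajchgot \cite[Proposition 2.9]{GVD_KR}, so there is no internal proof to compare against. Judged on its own merits, your argument is correct, and it is essentially the standard dictionary proof of that proposition. The translation $N_{y,I_\Delta} = I_{\del_\Delta(y)}$ and $C_{y,I_\Delta} = I_{\lk_\Delta(y)}$ (both verified correctly by divisibility of square-free monomials), the identification of purity with unmixedness via the primes $\gen{V\setminus F}$, and the observation that $I = C_{y,I}\cap(N_{y,I}+\gen{y})$ holds automatically for every square-free monomial ideal and every variable $y$ are all accurate, and both inductions on $|V|$ go through. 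Most importantly, you correctly isolate and resolve the one point where such a proof can genuinely fail: in the reverse direction, Definition~\ref{def. GVD} supplies no shedding hypothesis on $y$. Your dichotomy handles it properly --- if $y$ divides no minimal generator, then $C_{y,I_\Delta}=N_{y,I_\Delta}$, $\Delta$ is a cone with apex $y$, and Theorem~\ref{thm. join is VD} substitutes for a shedding vertex; otherwise some minimal non-face contains $y$, hence some facet of $\Delta$ avoids $y$, and purity of $\Delta$ together with unmixedness of $N_{y,I_\Delta}$ forces every facet of $\del_\Delta(y)$ to have dimension $\dim\Delta$ and therefore to be a facet of $\Delta$, making $y$ a shedding vertex. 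This is precisely the content behind the cited result, so your write-up could serve as a self-contained substitute for the citation.
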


Since the simplicial complex of our interest is defined by its minimal non-faces (i.e., the generating set of its Stanley--Reisner ideal), it is natural to use the GVD property to determine whether the simplicial complex is vertex decomposable.
We list some basic properties of GVD square-free monomial ideals below.

\begin{lemma}\label{lem. prod of var is GVD}
    Let $I = \gen{m} \subseteq \kk[x_1,\dots,x_n]$ for some square-free monomial $m \in \kk[x_1,\dots,x_n]$.
    Then $I$ is GVD.
\end{lemma}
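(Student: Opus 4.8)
The plan is to argue by induction on $\deg m$, the number of variables dividing $m$ (these agree since $m$ is square-free). Before starting the induction I would record that $I = \gen{m}$ is automatically unmixed: writing $m = x_{i_1}\cdots x_{i_r}$, the primary decomposition $\gen{m} = \gen{x_{i_1}} \cap \cdots \cap \gen{x_{i_r}}$ shows that every minimal prime of $I$ has the form $\gen{x_{i_j}}$ and hence has height one. This supplies the unmixedness demanded by Definition~\ref{def. GVD}. For the base cases, if $\deg m = 0$ then $m = 1$ and $I = \gen{1}$, while if $\deg m = 1$ then $m$ is an indeterminate and $I$ is generated by an indeterminate; in either situation $I$ is GVD by clause~(1).

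For the inductive step, suppose $\deg m = r \geq 2$ and let $y = x_{i_1}$ be any variable dividing $m$. Since $m$ is the unique generator of $I$ and $y \mid m$, there is no generator of $I$ coprime to $y$, so $N_{y,I} = (0)$, and $C_{y,I} = \gen{m/y}$. I would then verify the geometric vertex decomposition directly: we have $N_{y,I} + \gen{y} = \gen{y}$, and using the identity $\gen{a}\cap\gen{b} = \gen{\lcm(a,b)}$ for monomial ideals together with the fact that $m$ is square-free (so $y \nmid m/y$),
$$
C_{y,I} \cap \igen{N_{y,I} + \gen{y}} = \gen{m/y} \cap \gen{y} = \gen{\lcm(m/y, y)} = \gen{m} = I.
$$

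It then remains to confirm that $C_{y,I}$ and $N_{y,I}$ are GVD in $R/\gen{y}$. Here $C_{y,I} = \gen{m/y}$ is again a principal ideal generated by a square-free monomial, now in the polynomial ring $R/\gen{y}$ on the remaining variables, with $\deg(m/y) = r - 1$; hence it is GVD by the induction hypothesis. The remaining ideal $N_{y,I} = (0)$ is GVD as a base case, being vacuously generated by indeterminates. Combining these facts with the decomposition displayed above shows that $I$ is GVD, completing the induction.

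I expect the only delicate point to be the bookkeeping around the degenerate ideal $N_{y,I} = (0)$: one must adopt (or confirm) the convention that the zero ideal satisfies clause~(1) of Definition~\ref{def. GVD}, since it is generated by the empty set of indeterminates and has a unique minimal prime. Everything else is routine monomial algebra, the essential computation being the intersection $\gen{m/y}\cap\gen{y} = \gen{m}$, which uses precisely that $m$ is square-free.
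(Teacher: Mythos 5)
Your proof is correct and follows essentially the same route as the paper's: induction on the number of variables dividing $m$, a geometric vertex decomposition with respect to one such variable $y$, the computation $C_{y,I} = \gen{m/y}$, $N_{y,I} = \gen{0}$, and the intersection identity $\gen{m/y} \cap \gen{y} = \gen{m}$, with the zero ideal handled by convention as GVD. Your additional remarks on unmixedness and the $\deg m = 0$ base case are details the paper leaves implicit, but the argument is the same.
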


\begin{proof}
    Without loss of generality, let $m = x_1\cdots x_k$ for some integer $k$ with $1 \leq k \leq n$.
    We proceed by induction on $k$.
    If $k = 1$, then $I$ is generated by an indeterminate, so it is GVD by definition.
    If $k > 1$, consider the decomposition with respect to $x_k$:
    \begin{align*}
        C_{x_k, I} \cap (N_{x_k, I} + \gen{x_k}) &= \gen{x_1\cdots x_{k-1}} \cap (\gen{0} + \gen{x_k})\\
        &= \gen{x_1\cdots x_{k-1}} \cap \gen{x_k}\\
        &= \gen{x_1\cdots x_k} = I.
    \end{align*}
    Since the zero ideal $\gen{0}$ is GVD and $\gen{x_1\cdots x_{k-1}}$ is GVD by the induction hypothesis, we conclude that $I$ is GVD.
\end{proof}

Using Theorem~\ref{theorem. GVD iff vert decomp} and Theorem~\ref{thm. join is VD}, we obtain the following result.
Note that a more general version of this theorem was established by Cummings, Da Silva, Rajchgot, and Van Tuyl~\cite{GVD_toric}.

\begin{theorem}[\protect{\cite[Theorem 1.1]{GVD_toric}}]\label{theorem. I + J is GVD}
    Let $I, J \subseteq \kk[x_1,\dots,x_n]$ be square-free monomial ideals whose minimal generators use disjoint sets of variables.
    Then $I + J$ is GVD if and only if both $I$ and $J$ are GVD.
\end{theorem}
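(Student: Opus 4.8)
The plan is to transport the entire statement across the Stanley--Reisner correspondence and reduce it to the join result already recorded in Theorem~\ref{thm. join is VD}. Let $V'$ be the set of variables occurring in the minimal generators of $I$ and $V''$ the set occurring in those of $J$, so that $V' \cap V'' = \emptyset$ by hypothesis; I may assume every variable of $\kk[x_1,\dots,x_n]$ lies in $V' \cup V''$, since a variable occurring in no minimal generator is a cone point of every face complex in sight and, a simplex being vertex decomposable, contributes a trivial join factor that alters neither side of the claimed equivalence via Theorem~\ref{thm. join is VD}. Set $\Delta' := \Delta_I$, viewed on $V'$, and $\Delta'' := \Delta_J$, viewed on $V''$. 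The crux is the identity
$$
\Delta_{I + J} = \Delta' \ast \Delta''.
$$
Once this is in hand, Theorem~\ref{theorem. GVD iff vert decomp} rewrites \q{$I + J$ is GVD} as \q{$\Delta' \ast \Delta''$ is vertex decomposable}, Theorem~\ref{thm. join is VD} factors the latter into vertex decomposability of $\Delta'$ and of $\Delta''$, and applying Theorem~\ref{theorem. GVD iff vert decomp} to each factor recovers \q{$I$ and $J$ are GVD}.

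First I would prove the join identity straight from the definition of the Stanley--Reisner complex. Because $I$ and $J$ are monomial ideals, a square-free monomial $X_S$ lies in $I + J$ exactly when it lies in $I$ or in $J$. Since every minimal generator of $I$ is supported in $V'$, divisibility of $X_S$ by such a generator depends only on $S \cap V'$; writing $F' := S \cap V'$ and $F'' := S \cap V''$ we get $X_S \in I \iff X_{F'} \in I$ and $X_S \in J \iff X_{F''} \in J$. Hence $X_S \notin I + J$ if and only if $X_{F'} \notin I$ and $X_{F''} \notin J$, i.e. $F' \in \Delta'$ and $F'' \in \Delta''$; as $S = F' \sqcup F''$ this says precisely $S \in \Delta' \ast \Delta''$, proving the identity.

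The one genuinely delicate point -- and the step I expect to require the most care -- is the bookkeeping of the purity and unmixedness hypotheses, since Definition~\ref{def. vert decomp} and Theorem~\ref{thm. join is VD} are stated for pure complexes whereas the GVD property presupposes an unmixed ideal. I would handle this with two standard observations. First, by the irredundant primary decomposition $I_\Delta = \bigcap_{F \in \mathcal{F}(\Delta)} \gen{V \setminus F}$, the minimal primes of $I_\Delta$ are the $\gen{V \setminus F}$ for facets $F$, of height $|V| - \dim F - 1$, so $I_\Delta$ is unmixed if and only if $\Delta$ is pure. Second, every facet of a join $\Delta' \ast \Delta''$ is $F' \cup F''$ with $F', F''$ facets of the respective complexes, and $\dim(F' \cup F'') = \dim F' + \dim F'' + 1$; therefore $\Delta' \ast \Delta''$ is pure if and only if both $\Delta'$ and $\Delta''$ are pure. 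Combining these, $I + J$ is unmixed exactly when both $I$ and $J$ are unmixed.

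Assembling the pieces gives both implications. If $I + J$ is GVD then it is unmixed, so $\Delta' \ast \Delta''$ is pure, hence so are $\Delta'$ and $\Delta''$; Theorem~\ref{theorem. GVD iff vert decomp} then applies and makes $\Delta' \ast \Delta''$ vertex decomposable, Theorem~\ref{thm. join is VD} passes this to each factor, and a final application of Theorem~\ref{theorem. GVD iff vert decomp} shows $I$ and $J$ are GVD. Conversely, if $I$ and $J$ are GVD they are unmixed, so $\Delta'$ and $\Delta''$ are pure and vertex decomposable; their join is then pure (so $I + J$ is unmixed) and vertex decomposable by Theorem~\ref{thm. join is VD}, whence $I + J$ is GVD by Theorem~\ref{theorem. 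GVD iff vert decomp}. The degenerate cases $I = \gen{1}$, $J = \gen{1}$, or a zero ideal are immediate from the definitions.
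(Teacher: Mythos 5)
Your proposal is correct and takes essentially the same route as the paper: the paper obtains this result precisely by combining Theorem~\ref{theorem. GVD iff vert decomp} with Theorem~\ref{thm. join is VD}, which is exactly your reduction. Your write-up simply supplies the details this combination requires, namely the join identity $\Delta_{I+J} = \Delta_I \ast \Delta_J$ and the purity/unmixedness bookkeeping.
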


\begin{corollary}\label{cor. GVD SMI plus a var is GVD}
    Let $m_1,\dots,m_k \in \kk[x_1,\dots,x_n,y]$ be square-free monomials such that $y \nmid m_i$ for all $1 \leq i \leq k$.
    If $I = \gen{m_1,\dots,m_k}$ is GVD, then $J = I + \gen{y}$ is also GVD.
\end{corollary}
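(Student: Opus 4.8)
The plan is to obtain this statement as an immediate consequence of Theorem~\ref{theorem. I + J is GVD}. First I would observe that the minimal generators of $I$ lie among $m_1,\dots,m_k$, and since $y \nmid m_i$ for every $i$, none of these generators involves the variable $y$. Thus the minimal generators of $I$ are supported on $\set{x_1,\dots,x_n}$, whereas $\gen{y}$ is supported on the single variable $y$; the two generating sets therefore use disjoint sets of variables, which is exactly the hypothesis required by Theorem~\ref{theorem. I + J is GVD}.

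Next I would check that the summand $\gen{y}$ is GVD. This is immediate from Lemma~\ref{lem. prod of var is GVD} applied to the square-free monomial $m = y$ (equivalently, $\gen{y}$ is generated by an indeterminate and so satisfies condition~(1) of Definition~\ref{def. GVD}, being prime and hence unmixed). Since $I$ is GVD by hypothesis and $\gen{y}$ is GVD, Theorem~\ref{theorem. I + J is GVD} applies to $J = I + \gen{y}$ and yields that $J$ is GVD, completing the argument.

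There is essentially no obstacle here, as the statement is a corollary; the only point needing care is verifying the disjoint-variable hypothesis of Theorem~\ref{theorem. I + J is GVD}, which is precisely what the assumption $y \nmid m_i$ supplies. If one preferred a self-contained argument bypassing Theorem~\ref{theorem. I + J is GVD}, one could instead perform a geometric vertex decomposition of $J$ directly with respect to $y$: one computes $N_{y,J} = \gen{m_1,\dots,m_k} = I$ and $C_{y,J} = \gen{1}$, verifies the decomposition $J = C_{y,J} \cap (N_{y,J} + \gen{y})$, and notes that both $\gen{1}$ and the image of $I$ are GVD in $R/\gen{y}$. The unmixedness of $J$ demanded by Definition~\ref{def. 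GVD} then follows from the fact that the minimal primes of $J$ are exactly the primes $P + \gen{y}$ for $P$ a minimal prime of $I$, all of which share the same height because $I$ is unmixed.
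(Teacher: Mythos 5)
Your proposal is correct and matches the paper's intended argument: the paper states this corollary immediately after Theorem~\ref{theorem. I + J is GVD} precisely because it follows by applying that theorem with the summand $\gen{y}$, which is GVD since it is generated by an indeterminate, and the hypothesis $y \nmid m_i$ guarantees the disjoint-variable condition. Your supplementary direct decomposition with respect to $y$ is also sound, but the main argument is exactly the paper's route.
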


The remainder of this section discusses some graph-theoretic properties of balanced trees which are used in Section~\ref{sec. ESC are GVD}.
We also define a special type of open neighborhood ideal at the end of this section.

\begin{theorem}[\protect{\cite[Corollary 3.5.5]{COURAGE}}]\label{theorem. height 2 vertex of degree 2 exists}
    Let $T = (V,E)$ be a TD-unmixed balanced tree of height 3.
    Then there exists a vertex $u \in V_2$ such that $\deg(u) = 2$.
\end{theorem}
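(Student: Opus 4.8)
The plan is to reduce the statement to the elementary fact that a finite tree has a leaf, applied inside a carefully chosen subforest. First I would record two local consequences of the hypotheses. Since the height function is $1$-Lipschitz with respect to graph distance (for adjacent $u,w$ one has $|\hh(u)-\hh(w)| \le \dist(u,w) = 1$), and since $T$ is balanced with $\hh(T) = 3$, every neighbor of a vertex in $V_3$ must lie in $V_2$, while every neighbor of a vertex in $V_2$ must lie in $V_1 \cup V_3$. Combining the latter with Condition~(2) of Theorem~\ref{theorem. char. of unmixed delt. trees} (each $v \in V_2$ has exactly one neighbor in $V_1$) yields, for every $u \in V_2$,
$$\deg(u) = 1 + |N(u) \cap V_3|.$$
Hence the claim ``$\deg(u) = 2$ for some $u \in V_2$'' is equivalent to ``$|N(u)\cap V_3| = 1$ for some $u \in V_2$.''

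Next I would pass to the subforest $H$, the subgraph of $T$ induced on $V_2 \cup V_3$, whose edges are exactly the $V_2$--$V_3$ edges of $T$. By the observations above, $\deg_H(u) = |N(u)\cap V_3|$ for $u \in V_2$, while $\deg_H(r) = \deg_T(r)$ for $r \in V_3$ (all of whose neighbors lie in $V_2$). Two degree bounds are then crucial: since leaves of $T$ have height $0$, no vertex of $V_2$ or $V_3$ is a leaf, so $\deg_T \ge 2$ on $V_2 \cup V_3$. This gives $\deg_H(r) \ge 2$ for every $r \in V_3$, and, since a $V_2$-vertex has exactly one $V_1$-neighbor, it forces $|N(u)\cap V_3| \ge 1$, i.e.\ $\deg_H(u) \ge 1$ for every $u \in V_2$.

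To finish, I would pick a connected component $C$ of $H$ containing a vertex of $V_3$; such a component exists because $\hh(T)=3$ forces $V_3 \neq \varnothing$. Then $C$ is a tree with at least two vertices (any $r \in V_3 \cap C$ already has two neighbors in $C$), so it has a leaf. Every $V_3$-vertex of $C$ has $C$-degree $\ge 2$ and thus is not a leaf; therefore some leaf $u$ of $C$ lies in $V_2$, and for it $|N(u)\cap V_3| = \deg_C(u) = 1$, whence $\deg_T(u) = 2$, as desired.

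The main point to get right is the bookkeeping of which heights neighbors may take — in particular verifying that $V_3$-vertices attach only to $V_2$, and that every $V_2$-vertex genuinely has a $V_3$-neighbor, so that the $H$-degrees are not accidentally zero and the induced subforest behaves as claimed. Once these local facts are secured, the global conclusion follows immediately from the existence of a leaf in the finite tree $C$.
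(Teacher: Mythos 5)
Your proof is correct. Note that the paper itself gives no proof of this statement --- it is quoted as \cite[Corollary 3.5.5]{COURAGE} --- so there is no in-paper argument to compare against; what you have produced is a self-contained derivation from Theorem~\ref{theorem. char. of unmixed delt. trees} (which the paper also imports from \cite{COURAGE}) together with elementary facts about finite trees. All the key steps check out: the height function $\hh(\cdot) = \min_\ell \dist(\cdot,\ell)$ is indeed $1$-Lipschitz, so in a balanced tree adjacent vertices have heights differing by exactly $1$; with $\hh(T)=3$ this pins neighbors of $V_3$ inside $V_2$ and neighbors of $V_2$ inside $V_1 \cup V_3$, and Condition~(2) of Theorem~\ref{theorem. char. of unmixed delt. trees} then gives $\deg_T(u) = 1 + |N(u)\cap V_3|$ for $u \in V_2$. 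Your degree bounds on the induced bipartite forest $H = T[V_2 \cup V_3]$ are also right: vertices of height $2$ or $3$ cannot be leaves of $T$, so $\deg_H \ge 2$ on $V_3$ and $\deg_H \ge 1$ on $V_2$, and any leaf of a component of $H$ meeting $V_3$ must therefore lie in $V_2$ and have exactly one $V_3$-neighbor, i.e.\ degree $2$ in $T$. One could alternatively obtain the result from the constructive characterization (Theorem~\ref{thm. constructing hh = 3 delt. trees}) by induction on the operations $\mathcal{O}$ applied to $P_6$, which is plausibly closer to how \cite{COURAGE} derives its Corollary 3.5.5; your route is more direct in that it needs only the numerical conditions of Theorem~\ref{theorem. char. of unmixed delt. trees} and the existence of a leaf in a finite tree, and so would survive even without the constructive machinery.
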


Let $G = (V,E)$ be a graph.
For $S \subset V$, the \emph{subgraph induced by $S$}, denoted $G[S]$, is the subgraph of $G$ whose vertex set is $S$ and where $\set{u,v} \subset S$ is an edge in $G[S]$ if and only if $\set{u,v} \in E$.
We denote the subgraph induced by $V \setminus S$ by $G \setminus S$.

The following theorem is used in the proof of Proposition~\ref{prop. T - r is delta forest} and can be proved using the equivalent definitions of a tree (see \cite[Theorem 1.5.1]{GT_Diestel}).

\begin{theorem}\label{theorem. connected components of tree}
    Let $T = (V,E)$ be a tree and let $v \in V$.
    Then $T \setminus v$ has exactly $\deg(v)$ connected components, and each connected component contains a unique vertex adjacent to $v$ in $T$.
\end{theorem}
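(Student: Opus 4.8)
The plan is to exploit the characterization of a tree as a connected graph in which every pair of vertices is joined by a \emph{unique} path; this is exactly the equivalent definition recorded in \cite[Theorem 1.5.1]{GT_Diestel}. Write $d := \deg(v)$ and let $u_1,\dots,u_d$ be the neighbors of $v$ in $T$. The goal is to set up a bijection between the connected components of $T\setminus v$ and the set $\set{u_1,\dots,u_d}$, sending each component to the unique neighbor it contains.

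The key step is the following claim: two vertices $a,b \in V\setminus\set{v}$ lie in the same connected component of $T\setminus v$ if and only if the unique $a$--$b$ path in $T$ does not pass through $v$. For one direction, if the unique $a$--$b$ path avoids $v$, then it is already a path in $T\setminus v$, so $a$ and $b$ lie in the same component. For the converse, if $a$ and $b$ lie in the same component of $T\setminus v$, there is a path from $a$ to $b$ inside $T\setminus v$; this is also a path in $T$, and by uniqueness of paths in a tree it must coincide with the unique $a$--$b$ path, which therefore avoids $v$.

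With the claim in hand I would finish as follows. First, distinct neighbors lie in distinct components: for $i \neq j$ the walk $u_i\, v\, u_j$ is a path in $T$ (its three vertices are distinct), hence by uniqueness it is \emph{the} $u_i$--$u_j$ path, which passes through $v$; by the claim $u_i$ and $u_j$ lie in different components. This shows $T\setminus v$ has at least $d$ components and that each component contains at most one neighbor. Next, every vertex $w \in V\setminus\set{v}$ lies in the same component as some neighbor: taking the unique $w$--$v$ path and letting $u_i$ be the vertex immediately preceding $v$ on it, the initial segment from $w$ to $u_i$ is a path avoiding $v$, so $w$ and $u_i$ share a component by the claim. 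Consequently there are at most $d$ components and every component contains at least one neighbor. Combining the two bounds gives exactly $d$ components, each containing a unique vertex adjacent to $v$.

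I expect the only real subtlety — and hence the main point to be careful about — to be the rigorous use of uniqueness of paths in the proof of the claim, namely ensuring that a connecting path inside $T\setminus v$ is literally the unique path of $T$ rather than merely some path, together with the degenerate case $\deg(v) = 0$: this forces $T$ to be the single vertex $v$, so $T\setminus v$ is empty with $0$ components and the statement holds vacuously.
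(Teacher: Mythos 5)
Your proof is correct and follows exactly the route the paper indicates: the paper does not actually prove this statement, deferring instead to the equivalent definitions of a tree in \cite[Theorem 1.5.1]{GT_Diestel}, and your argument is precisely the standard filling-in of that citation via the unique-path characterization (the component--neighbor bijection, the uniqueness claim, and the degenerate case $\deg(v)=0$ are all handled correctly).
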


\begin{proposition}\label{prop. T - r is delta forest}
    Let $T$ be a TD-unmixed balanced tree of height 3, and let $r \in V_3$.
    Then $T' := T \setminus \set{r}$ is a TD-unmixed balanced forest.
\end{proposition}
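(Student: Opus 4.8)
The plan is to analyze $T'$ component by component: use Theorem~\ref{theorem. connected components of tree} to split $T'$ into $\deg(r)$ connected subtrees, and then verify that each one is a TD-unmixed balanced tree by checking the criteria of Theorem~\ref{theorem. char. of unmixed delt. trees}. First I would record two structural facts about $T$. Since $T$ is balanced, any two adjacent vertices $u,v$ satisfy $|\hh(u)-\hh(v)|\le 1$ (a leaf nearest to $u$ is within distance $\hh(u)+1$ of $v$, and symmetrically) together with $\hh(u)\ne\hh(v)$, so adjacent vertices differ in height by exactly $1$. As $\hh(T)=3$ and $r\in V_3$, every neighbor of $r$ must then lie in $V_2$ (height $4$ being impossible). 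Moreover, combining Conditions (2) and (3) of Theorem~\ref{theorem. char. of unmixed delt. trees} (the latter an equality because $\hh(T)=3$), the edges between $V_1$ and $V_2$ form a perfect matching: each $u\in V_2$ has a unique $V_1$-neighbor and each $s\in V_1$ a unique $V_2$-neighbor. By Theorem~\ref{theorem. connected components of tree}, $T'$ is a forest whose components $C_1,\dots,C_d$ (with $d=\deg(r)$) each contain exactly one neighbor $u_i\in V_2$ of $r$, and within $C_i$ the deletion of $r$ only removes $r$ from the neighborhood of $u_i$.

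Next I would split into cases according to $\deg_T(u_i)$. If $\deg_T(u_i)=2$, the neighbors of $u_i$ in $T$ are $r$ and its matched vertex $s_i\in V_1$; since $u_i$ is the unique $V_2$-neighbor of $s_i$, all other neighbors of $s_i$ are leaves, so $C_i$ is exactly the star with center $s_i$ and leaves consisting of $u_i$ together with the leaves of $s_i$ — a star on at least two leaves, hence TD-unmixed and balanced by Example~\ref{ex. unmixed delta trees}. If instead $\deg_T(u_i)\ge 3$, the key claim is that heights are unchanged: $\hh_{C_i}(w)=\hh_T(w)$ for every $w\in C_i$. Indeed, distances inside the subtree $C_i$ agree with those in $T$, and deleting $r$ creates no new leaf in $C_i$ since $\deg_{C_i}(u_i)\ge 2$; the only way $\hh_{C_i}(w)$ could exceed $\hh_T(w)$ is if the nearest leaf $\ell$ of $w$ in $T$ lay outside $C_i$, but then the $w$-to-$\ell$ path would pass through $r$, forcing $\dist(r,\ell)\le\dist(w,\ell)=\hh_T(w)\le 3=\hh(r)$ and hence $\dist(w,r)=0$, i.e.\ $w=r$, a contradiction. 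With heights preserved, Conditions (1)--(3) of Theorem~\ref{theorem. char. of unmixed delt. trees} for $C_i$ follow from those for $T$: the only neighborhood that changed is that of $u_i$, which loses the $V_3$-vertex $r$ and therefore still meets $V_1$ in precisely the single vertex $s_i$. Thus each such $C_i$ is again a TD-unmixed balanced tree of height $3$.

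Finally I would assemble the components. Each $C_i$ is a tree with at least one leaf and no isolated vertex, so its minimal TD-sets all share one common size; since a set is a TD-set of the forest $T'$ exactly when its restriction to each $C_i$ is a TD-set of $C_i$, the minimal TD-sets of $T'$ are the disjoint unions of minimal TD-sets of the $C_i$, all of equal total size, so $T'$ is TD-unmixed. Balancedness of $T'$ is immediate because adjacencies occur only within components. Hence $T'$ is a TD-unmixed balanced forest. The main obstacle is the height bookkeeping in the second step: deleting $r$ genuinely alters heights in the degree-two components (where $u_i$ turns into a new leaf), so the argument must isolate those star components and establish the height-preservation claim for the rest, which is precisely where the maximality $\hh(r)=\hh(T)=3$ is used.
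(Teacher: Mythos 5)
Your proposal is correct and follows essentially the same route as the paper: decompose $T'$ into components via Theorem~\ref{theorem. connected components of tree}, split on the degree of the $V_2$-neighbor $u_i$ of $r$ in each component (star component when $\deg_T(u_i)=2$, height-3 component otherwise), and verify the conditions of Theorem~\ref{theorem. char. of unmixed delt. trees}. The only differences are cosmetic improvements: your height-preservation step uses a single uniform distance argument (a nearest leaf outside $C_i$ would force $w=r$) where the paper runs a case analysis over heights $0$ through $3$, and you make explicit the final assembly of TD-unmixedness of the forest from its components, which the paper leaves implicit.
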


\begin{proof}
    Since we only removed edges between height 2 and height 3 vertices when we delete $r$ from $T$, conditions (1), (2), and (3) in Theorem~\ref{theorem. char. of unmixed delt. trees} still holds in $T \setminus r$.
    Hence it suffices to show that every connected component of $T'$ is a balanced tree.
    Let $C$ be a connected component in $T'$.
    Then by Theorem~\ref{theorem. connected components of tree}, there is $u \in V(C)$ such that $u \in N_T(r)$.
    Since $\hh(r) = 3$, we must have $\hh(u) = 2$ as $T$ is balanced.
    
    If $\deg_T(u) = 2$, then $u$ is a leaf in $C$.
    Also by Theorem~\ref{theorem. char. of unmixed delt. trees}, there is $s \in V_1(T)$ such that $N_C(u) = \set{s}$.
    In this case $C$ is the subgraph induced by $N[s]$ which is a star graph, an unmixed balanced tree of height 1.

    Now suppose that $\deg_T(u) > 2$.
    Then by Theorem~\ref{theorem. char. of unmixed delt. trees}, there is at least one vertex $r' \in V_3(T)$ that is in $C$.
    We claim that $C$ is a balanced tree of height 3.
    First, note that for all $x \in V(C)\setminus \set{u'}$, we have $\deg_C(x) = \deg_T(x)$ and $\deg_C(u') \geq 2$. 
    Hence every leaf in $C$ is also a leaf in $T$.
    
    Let $v \in V(C)$, and let $h:= \hh_T(v)$.
    If $h = 0$, then $v$ is a leaf in $T$.
    Since there are no edge added or deleted among the edges incident to height 0 or 1 vertices in $T$, $v$ is still a leaf in $C$.
    Thus $\hh_C(v) = 0 = \hh_T(v)$.
    Hence we have shown that: for all $x \in V(C)$, $x$ is a leaf in $C$ if and only if $x$ is a leaf in $T$ ($\star$).
    
    If $h = 1$, then there is a leaf $v_0 \in V(T)$ that is adjacent to $v$.
    Since $\hh_T(r) = 3$, $r \neq v$ and $r \neq v_0$.
    So, $\set{v,v_0} \in E(C)$, hence $v_0 \in C$, therefore $\hh_C(v) = 1 = \hh_T(v)$ by ($\star$).

    If $h = 2$, then there are vertices $v_0 \in V_0(T)$ and $v_1 \in V_1(T)$ such that $v_0v_1v$ is a path of length 2 in $T$.
    Since the heights of these vertices are different from $\hh_T(r)$, $r \not\in \set{v_0,v_1,v}$.
    Thus $\set{v_0,v_1,v} \subseteq V(T')$.
    Also, as $v \in V(C)$ and $\set{v_0v_1, v_1v} \subset E(T')$, $\set{v_0,v_1,v} \subset V(C)$.
    So, we get $\hh_C(v) = 2 = \hh_T(v)$ by ($\star$).
    
    Finally, suppose that $h = 3$.
    Then there are vertices $v_i \in V_i(T)$ for $i \in [2]_0$ such that $v_0v_1v_2v$ is a path of length 3 in $T$.
    Since $v \in V(C) \subset V(T')$ but $r \not\in V(T')$, $r \neq v$.
    Thus $r \not\in \set{v_0,v_1,v_2,v}$ by the height argument as before.
    Hence $\set{v_0v_1, v_1v_2, v_2v} \subset E(T')$ and $v \in V(C)$, we get $\set{v_0,v_1,v_2} \subset V(C)$.
    Therefore, $\hh_C(v) = 3 = \hh_T(v)$ by ($\star$).

    So far, we have shown that $\hh_C(x) = \hh_T(x)$ for all $x \in V(C)$.
    Since $E(C) \subset E(T)$, $T$ being a balanced tree implies that $C$ is a balanced tree as desired.
\end{proof}

A direct consequence of the proof of Proposition~\ref{prop. T - r is delta forest} is that the set of vertices of odd height in $T \setminus \set{r}$ is exactly the set of vertices of odd height in $T$ excluding $r$.

\begin{corollary}\label{cor. odd in T - r is odd in T}
    Let $T$ be a TD-unmixed balanced tree of height 3, and let $r \in V_3$.
    Then $V_{odd}(T \setminus \set{r}) = V_{odd}(T) \setminus \set{r}$.
\end{corollary}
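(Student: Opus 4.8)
The plan is to read off the conclusion directly from the identity $\hh_C(x) = \hh_T(x)$ that was established, for every connected component $C$ of $T' := T \setminus \set{r}$ and every $x \in V(C)$, in the proof of Proposition~\ref{prop. T - r is delta forest}. Recall that $V_{odd}(G)$ is by definition the set of vertices of $G$ whose height is odd. Since $r \in V_3$, we have $\hh_T(r) = 3$, which is odd, so $r \in V_{odd}(T)$; consequently $V_{odd}(T) \setminus \set{r}$ is obtained by deleting exactly the single vertex $r$ from $V_{odd}(T)$.

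First I would reconcile the height function of the forest $T'$ with those of its components. Fix $x \in V(T') = V \setminus \set{r}$ and let $C$ be the connected component of $T'$ containing $x$. Because a vertex in one component has infinite distance to every leaf lying in a different component, the minimum in the definition of $\hh_{T'}(x)$ is attained at a leaf of $C$; that is, $\hh_{T'}(x) = \hh_C(x)$. (Isolated vertices cause no trouble: they form singleton components of height $0$, consistent with the convention $\hh(v) = 0$ when $\deg(v) = 0$.) Combining this with the identity recalled above gives $\hh_{T'}(x) = \hh_C(x) = \hh_T(x)$ for every $x \in V(T')$.

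With heights preserved across $T'$ and $T$ away from $r$, the parity of the height of each surviving vertex is unchanged, so for every $x \neq r$ we have $x \in V_{odd}(T')$ if and only if $x \in V_{odd}(T)$. Since $r \notin V(T')$, this yields
$$
V_{odd}(T') = \Set{x \in V \setminus \set{r}}{\hh_T(x) \text{ is odd}} = V_{odd}(T) \setminus \set{r},
$$
as claimed. The only point requiring any care is the first step — matching $\hh_{T'}$ with the component height functions $\hh_C$ — but this is immediate from the definition of height once one observes that leaves are detected within components; the substantive work has already been carried out in Proposition~\ref{prop. T - r is delta forest}.
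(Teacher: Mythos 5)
Your proof is correct and takes essentially the same approach as the paper: the paper states the corollary as a direct consequence of the identity $\hh_C(x) = \hh_T(x)$ established in the proof of Proposition~\ref{prop. T - r is delta forest}, which is precisely what you use. Your additional step identifying $\hh_{T'}$ with the component height functions $\hh_C$ (and handling isolated vertices via the degree-zero convention) is a minor point the paper leaves implicit, and it is handled correctly.
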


\begin{example}\label{ex. 7}
    Consider the TD-unmixed balanced tree $T$ shown in Figure~\ref{fig: 7}.
    \begin{figure}[ht]
        \centering
        \includegraphics[width=0.3\textwidth]{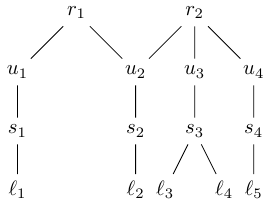}
        \quad
        \vline
        \quad
        \includegraphics[width=0.235\textwidth]{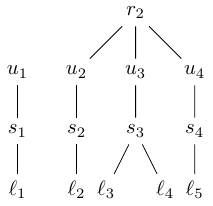}
        \quad
        \vline
        \quad
        \includegraphics[width=0.3\textwidth]{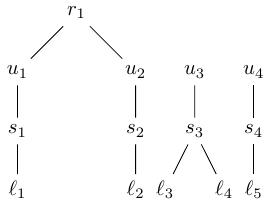}
        \caption{$T$, $T\setminus r_1$, and $T \setminus r_2$ (left to right, respectively)}
        \label{fig: 7}
    \end{figure}
    
    \noindent The graphs $T\setminus r_1$ and $T \setminus r_2$ are TD-unmixed balanced forests, as guaranteed by Proposition~\ref{prop. T - r is delta forest}.
    Furthermore, we have $V_{odd}(T\setminus r_1) = \set{s_1,s_2,s_3,s_4,r_2} = V_{odd}(T)\setminus \set{r_1}$ and $V_{odd}(T\setminus r_2) = \set{s_1,s_2,s_3,s_4,r_1} = V_{odd}(T)\setminus \set{r_2}$, consistent with Corollary~\ref{cor. odd in T - r is odd in T}.
\end{example}

\begin{theorem}\label{thm. T - N[u] is delta forest}
    Let $T = (V,E)$ be a TD-unmixed balanced tree of height 3, and let $u \in V_2$.
    Then $T\setminus N[u]$ is a TD-unmixed balanced forest.
\end{theorem}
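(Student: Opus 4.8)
The plan is to reduce to the single-vertex deletion already handled in Proposition~\ref{prop. T - r is delta forest} by describing the connected components of $T \setminus N[u]$ explicitly. First I would pin down the local structure of $N[u]$. Since $u \in V_2$ and $T$ is balanced of height $3$, every neighbor of $u$ lies in $V_1 \cup V_3$ (a height-$2$ vertex is adjacent to no leaf, so $N(u)\cap V_0 = \varnothing$, and balancedness forbids a height-$2$ neighbor). By Theorem~\ref{theorem. char. of unmixed delt. trees}(2) exactly one neighbor $s$ lies in $V_1$, and since $u$ is not a leaf its remaining neighbors $r_1,\dots,r_m$ (with $m \geq 1$) all lie in $V_3$; hence $N[u] = \set{u,s,r_1,\dots,r_m}$. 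I would then analyze $s$: by Theorem~\ref{theorem. char. of unmixed delt. trees}(3) its unique height-$2$ neighbor is $u$, so every other neighbor of $s$ is a leaf, and I write $L := N(s) \setminus \set{u}$ for this set of leaves.

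The core step is to identify the components of $T \setminus N[u]$. Every vertex $x \notin N[u]$ has a unique path to $u$ in $T$, whose first edge is either $us$ or $ur_i$ for some $i$. In the first case $x$ lies in the star induced by $N[s]$, so once $s$ is deleted it becomes an isolated vertex; thus the elements of $L$ are isolated (height-$0$) components. In the second case $x$ lies in the component of $T \setminus \set{r_i}$ containing the relevant neighbor $w \neq u$ of $r_i$. Because $T$ is a tree, any path joining two of these pieces, or joining one of them to $L$, would have to traverse $u$ or some $r_i$ and hence meet $N[u]$; therefore the pieces just listed are exactly the connected components of $T \setminus N[u]$, and each nonisolated one coincides with a component of the single deletion $T \setminus \set{r_i}$ that does not contain $u$. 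I would make this precise using Theorem~\ref{theorem. connected components of tree} applied to the deletion of $u$ (giving $\deg(u) = m+1$ components, one through $s$ and one through each $r_i$) and to the deletion of each $r_i$.

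Finally I would invoke Proposition~\ref{prop. T - r is delta forest}: each $T \setminus \set{r_i}$ is a TD-unmixed balanced forest, so every one of its components is a TD-unmixed balanced tree; in particular the nonisolated components of $T \setminus N[u]$ identified above are TD-unmixed balanced trees. Together with the isolated height-$0$ vertices arising from $L$ (each trivially such a tree), this exhibits every component of $T \setminus N[u]$ as a TD-unmixed balanced tree, whence $T \setminus N[u]$ is a TD-unmixed balanced forest. I expect the main obstacle to be the bookkeeping in the core step, namely verifying that deleting the extra vertices $\set{u,s} \cup \set{r_{i'} : i' \neq i}$ does not disturb the components of $T \setminus \set{r_i}$ lying on the far side of $r_i$, so that Proposition~\ref{prop. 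T - r is delta forest} transfers cleanly. This rests entirely on the unique-path property of trees together with the height constraints, which force all removed vertices to sit on the $u$-side of each $r_i$.
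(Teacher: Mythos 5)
Your proposal is correct and takes essentially the same approach as the paper's proof: both use Theorem~\ref{theorem. char. of unmixed delt. trees}(2) to write $N(u) = \set{s, r_1,\dots,r_m}$ with $s \in V_1$ and $r_i \in V_3$, observe that each component of $T \setminus N[u]$ meeting a neighbor of some $r_i$ is exactly a component of $T \setminus \set{r_i}$ (so Proposition~\ref{prop. T - r is delta forest} applies), and handle the components attached through $s$ as isolated vertices via Theorem~\ref{theorem. char. of unmixed delt. trees}(3). Your write-up merely spells out the component bookkeeping in more detail than the paper does.
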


\begin{proof}
    By Theorem~\ref{theorem. char. of unmixed delt. trees}(2), we can write $N_T(u) = \set{r_1,\dots,r_k,s}$ for some $k \geq 1$, where $\hh_T(r_i) = 3$ for all $1 \leq i \leq k$ and $\hh_T(s) = 1$.
    Since $T$ is a tree, each connected component of $T\setminus N[u]$ contains a unique vertex adjacent to a vertex in $N_T(u)$.
    Let $C$ be a connected component of $T\setminus N[u]$.
    If a vertex in $C$ is adjacent to $r_i$ for some $i \in [k]$ in $T$, then $C$ is also one of the connected components of $T \setminus \set{r_i}$.
    Hence, by Proposition~\ref{prop. T - r is delta forest}, $C$ is a TD-unmixed balanced tree.
    If a vertex in $C$ is adjacent to $s$, then $C$ must be an isolated vertex by Theorem~\ref{theorem. char. of unmixed delt. trees}(3), which is a TD-unmixed balanced tree of height 0.
\end{proof}

\begin{corollary}\label{cor. odd in T-N[u] is odd in T}
    Let $T = (V,E)$ be a TD-unmixed balanced tree of height 3, and let $u \in V_2$.
    Then $V_{odd}(T\setminus N[u]) = V_{odd}(T)\setminus N_T(u)$.
\end{corollary}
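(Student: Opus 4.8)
The plan is to argue, exactly as Corollary~\ref{cor. odd in T - r is odd in T} was deduced from Proposition~\ref{prop. T - r is delta forest}, that deleting $N[u]$ leaves the height of every surviving vertex unchanged; the desired set equality then follows by sorting vertices according to the parity of their height. Write $N_T(u) = \set{r_1,\dots,r_k,s}$ as in the proof of Theorem~\ref{thm. T - N[u] is delta forest}, where $\hh_T(r_i) = 3$ for all $i$ and $\hh_T(s) = 1$.

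First I would recall from the proof of Theorem~\ref{thm. T - N[u] is delta forest} the structure of the connected components of $T \setminus N[u]$. Each such component $C$ contains a unique vertex adjacent in $T$ to an element of $N_T(u)$. If that vertex is adjacent to some $r_i$, then $C$ is literally one of the connected components of $T \setminus \set{r_i}$: the vertices $u$, $s$, and the remaining $r_j$ are all adjacent to $u$, hence lie in the single component of $T \setminus \set{r_i}$ containing $u$, so removing them does not disturb $C$. If instead that vertex is adjacent to $s$, then by Theorem~\ref{theorem. char. of unmixed delt. trees}(3) it is a leaf and $C$ is an isolated vertex.

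Next I would verify that heights are preserved component by component. For a component $C$ of the first type, the proof of Proposition~\ref{prop. T - r is delta forest} shows $\hh_C(x) = \hh_T(x)$ for every $x \in V(C)$, and since this height is computed within the same component $C$ of the forest $T \setminus N[u]$, we get $\hh_{T \setminus N[u]}(x) = \hh_T(x)$. For an isolated component $C = \set{x}$ of the second type, $x$ is a leaf of $T$, so $\hh_T(x) = 0$, while $x$ has degree $0$ in $T \setminus N[u]$, so $\hh_{T\setminus N[u]}(x) = 0$ as well. Hence $\hh_{T \setminus N[u]}(x) = \hh_T(x)$ for every $x \in V \setminus N[u]$, which yields $V_{odd}(T \setminus N[u]) = V_{odd}(T) \setminus N[u]$.

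Finally I would pass from $N[u]$ to $N_T(u)$: since $u \in V_2$ has even height, $u \notin V_{odd}(T)$, so $V_{odd}(T) \setminus N[u] = V_{odd}(T) \setminus (N_T(u) \cup \set{u}) = V_{odd}(T)\setminus N_T(u)$, as claimed. The only real subtlety — and the step I would treat most carefully — is the identification of a first-type component of $T \setminus N[u]$ with a genuine component of $T\setminus\set{r_i}$, so that the height-preservation conclusion of Proposition~\ref{prop. T - r is delta forest} transfers verbatim; this rests entirely on the tree structure forcing $u$, $s$, and the other $r_j$ into one component of $T\setminus\set{r_i}$, which I would make explicit rather than leave to the reader.
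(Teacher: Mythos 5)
Your proposal is correct and takes essentially the same route as the paper: the paper states this corollary without a separate proof, as a direct consequence of the proof of Theorem~\ref{thm. T - N[u] is delta forest} (exactly as Corollary~\ref{cor. odd in T - r is odd in T} was drawn from the proof of Proposition~\ref{prop. T - r is delta forest}), and your argument just makes explicit the two facts that proof already contains — that each component of $T\setminus N[u]$ is either an untouched component of some $T\setminus\set{r_i}$ (where heights are preserved) or an isolated former leaf — before discarding $u$ itself by parity. The extra care you give to identifying first-type components with genuine components of $T\setminus\set{r_i}$ is a sound filling-in of a step the paper leaves implicit, not a deviation.
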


\begin{example}\label{ex. 8}
    Consider the tree $T$ from Example~\ref{ex. 7}.
    The forests $T\setminus N[u_1]$, $T \setminus N[u_2]$, and $T \setminus N[u_3]$ are shown in Figure~\ref{fig: 8}.
    \begin{figure}[ht]
        \centering
        \includegraphics[width=0.28\linewidth]{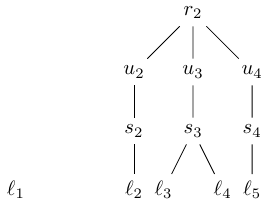}
        \quad
        \vline
        \quad
        \includegraphics[width=0.28\linewidth]{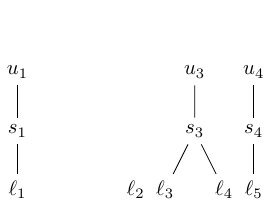}
        \quad
        \vline
        \quad
        \includegraphics[width=0.28\linewidth]{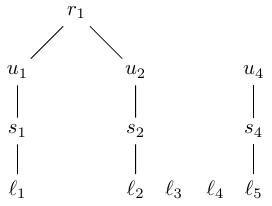}
        \caption{$T \setminus N[u_1]$, $T \setminus N[u_2]$, and $T \setminus N[u_3]$ (left to right, respectively)}
        \label{fig: 8}
    \end{figure}
    As stated in Theorem~\ref{thm. T - N[u] is delta forest}, all three graphs above are indeed TD-unmixed balanced forests.
    Furthermore, we have $V_{odd}(T\setminus N[u_i]) = V_{odd}(T)\setminus N_T(u_i)$ for all $i \in \{1,2,3\}$, consistent with Corollary~\ref{cor. odd in T-N[u] is odd in T}.
\end{example}

Here is the special type of open neighborhood ideal mentioned earlier.

\begin{definition}\label{def. induced oni}
    Let $T$ be a graph with at least one leaf, and let $T'$ be a subgraph of $T$.
    The \emph{induced odd-open neighborhood ideal of $T'$} in $T$ is the ideal in $\kk[V(T')]$ given by
    $$
    \calN_{odd}(T', T) := \gen{X_{N_{T'}(v)}\ :\ v \in V(T') \cap V_{odd}(T)}.
    $$
    If $T$ is a balanced tree, then we treat $\calN_{odd}(T',T)$ as an ideal in $\kk[V(T') \cap V_{even}(T)]$.
\end{definition}

\begin{example}\label{ex. 9}
    Let $T$ and $T'$ be the graphs shown in Figure~\ref{fig: 9}.
    Notice that $T$ is a balanced tree and $T' = T \setminus \set{u_3}$. However, $T'$ is not a balanced forest because $\hh_{T'}(u_1) = 2 = \hh_{T'}(r_1)$ and $\set{u_1,r_1} \in E(T')$.

    \begin{figure}[ht]
        \centering
        \includegraphics[width=0.3\linewidth]{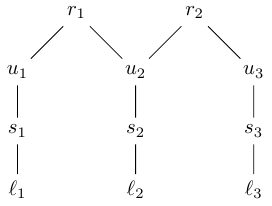}
        \qquad
        \vline
        \qquad
        \includegraphics[width=0.3\linewidth]{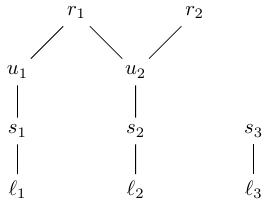}
        \caption{$T$ (left) and $T' = T \setminus u_3$ (right)}
        \label{fig: 9}
    \end{figure}

    \noindent We have $V_{odd}(T) = \set{s_1,s_2,s_3, u_1,u_2,u_3}$, while $V_{odd}(T') = \set{s_1,s_2,u_2}$.
    Thus, $\calN_{odd}(T) = \gen{\ell_1u_1, \ell_2u_2, \ell_3u_3, u_1u_2, u_2u_3}$ and $\calN_{odd}(T') = \gen{\ell_1u_1, \ell_2u_2, s_2r_1r_2}$, whereas
    $$
    \calN_{odd}(T',T) = \gen{\ell_1u_1, \ell_2u_2, \ell_3, u_1u_2, u_2} = \gen{\ell_1u_1, \ell_3, u_2}.
    $$
    Let $I = \calN_{odd}(T)$.
    Then we obtain $N_{u_3,I} = \gen{\ell_1u_1, \ell_2u_2, u_1u_2}$ and
    \begin{align*}
        C_{u_3,I} &= \gen{\ell_3,u_2} + N_{u_3,I}\\
        &= \gen{\ell_3,u_2,\ell_1u_1, \ell_2u_2, u_1u_2}\\
        &= \gen{\ell_1u_1, \ell_3, u_2}\\
        &= \calN_{odd}(T',T).
    \end{align*}
    The equality $C_{u_3, I} = \calN_{odd}(T',T)$ holds in general, as stated later in Lemma~\ref{lem. Cu,I and Nu,I}.
\end{example}
\section{$\calN(T)$ is Geometrically vertex decomposable}\label{sec. ESC are GVD}

In this section, we prove that open neighborhood ideals of TD-unmixed trees are geometrically vertex decomposable (GVD).
We begin with a lemma concerning TD-unmixed balanced trees of height 0 or 1.

\begin{lemma}\label{lem. height 0 and 1 are GVD}
    Let $T$ be a TD-unmixed balanced tree of height 0 or 1.
    Then $\calN_{odd}(T)$ is GVD.
\end{lemma}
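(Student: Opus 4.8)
The plan is to reduce the statement to the explicit classification of low-height TD-unmixed balanced trees and then quote Lemma~\ref{lem. prod of var is GVD}. By the discussion in Example~\ref{ex. unmixed delta trees} (a consequence of Theorem~\ref{theorem. char. of unmixed delt. trees}), a TD-unmixed balanced tree of height $0$ or $1$ is either an isolated vertex or a star graph with at least two leaves. So I would split the argument into these two cases and compute $\calN_{odd}(T)$ directly in each.

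In the height-$0$ case, $\hh(T) = 0$ forces $V = V_0$, so that $V_{odd} = \varnothing$ and there are no generators $X_{N(v)}$ with $v \in V_{odd}$. Hence $\calN_{odd}(T) = \gen{0}$, the zero ideal, which is GVD under the same convention already used in the proof of Lemma~\ref{lem. prod of var is GVD}, where $\gen{0}$ is treated as an ideal generated by the empty set of indeterminates.

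In the height-$1$ case, $T$ is a star with center $c$ and leaves $\ell_1,\dots,\ell_n$ for some $n \geq 2$; here $V_1 = \set{c}$ and $V_0 = \set{\ell_1,\dots,\ell_n}$, so $V_{odd} = \set{c}$ and the unique generator of $\calN_{odd}(T)$ is $X_{N(c)} = \ell_1\cdots\ell_n$. Thus $\calN_{odd}(T) = \gen{\ell_1\cdots\ell_n}$ is principal, generated by a single square-free monomial, whether we regard it as an ideal of $\kk[V]$ or of $\kk[V_{even}]$. Lemma~\ref{lem. prod of var is GVD} then gives that this ideal is GVD, completing both cases.

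I do not expect a genuine obstacle here, since this lemma serves as the base case of the later induction: the entire content is in correctly reading off the structure of height-$0$ and height-$1$ trees from the classification and observing that each yields either the zero ideal or a principal square-free monomial ideal, both already known to be GVD. The only point deserving a moment's care is the degenerate height-$0$ case, where one must confirm that an empty generating set produces the zero ideal and that this falls under the GVD convention.
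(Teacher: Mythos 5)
Your proposal is correct and matches the paper's own proof essentially verbatim: both arguments use the classification in Example~\ref{ex. unmixed delta trees} to see that $T$ is either an isolated vertex (giving $\calN_{odd}(T) = \gen{0}$, GVD by convention) or a star with at least two leaves (giving a principal square-free monomial ideal, GVD by Lemma~\ref{lem. prod of var is GVD}). No differences worth noting.
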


\begin{proof}
    If $\hh(T) = 0$, then $\calN_{odd}(T) = \gen{0}$, which is GVD.
    If $\hh(T) = 1$, then $T$ consists of one vertex $s$ of height 1 and leaves $\ell_1,\dots,\ell_k$ with $k \geq 2$ (see Example~\ref{ex. unmixed delta trees}).
    Hence we have $\calN_{odd}(T) = \gen{\ell_1\cdots\ell_k}$, which is GVD by Lemma~\ref{lem. prod of var is GVD}.
\end{proof}

Next, we prove a lemma that characterizes $C_{u,I}$ and $N_{u,I}$ in terms of induced odd-open neighborhood ideals when $T$ is a TD-unmixed balanced tree of height 3, $I = \calN_{odd}(T)$, and $u$ is a vertex of height 2 with degree 2.

\begin{lemma}\label{lem. Cu,I and Nu,I}
    Let $T$ be a TD-unmixed balanced tree of height 3, and let $u \in V_2(T)$ with $\deg_T(u) = 2$.
    Set $I := \calN_{odd}(T)$, $T' := T \setminus \set{u}$, and $T'' := T\setminus N[u]$.
    Then $C_{u,I} = \calN_{odd}(T', T)$ and $N_{u,I} = \calN_{odd}(T'',T) = \calN_{odd}(T'')$.
\end{lemma}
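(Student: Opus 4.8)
The plan is to compute $N_{u,I}$ and $C_{u,I}$ directly from the generating set $\set{X_{N_T(v)} \mid v \in V_{odd}(T)}$ of $I$, and then match the results against the generating sets of $\calN_{odd}(T',T)$ and of $\calN_{odd}(T'')$. First I would pin down $N_T(u)$. Since $\deg_T(u) = 2$ and $T$ is balanced of height $3$, neither neighbor of $u$ can have height $2$ (balancedness) nor height $0$ (a height-$0$ neighbor would be a leaf, forcing $\hh(u) \leq 1$), and no vertex has height exceeding $3$; so both neighbors lie in $V_1 \cup V_3$. Theorem~\ref{theorem. char. of unmixed delt. trees}(2) forces exactly one neighbor $s$ with $\hh(s) = 1$, whence the remaining neighbor $r$ satisfies $\hh(r) = 3$. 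Thus $N_T(u) = \set{s,r} \subseteq V_{odd}(T)$ and $N[u] = \set{u,s,r}$.

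Next I would identify the generators of $I$ divisible by $u$. Because $u \mid X_{N_T(v)}$ exactly when $v \in N_T(u) = \set{s,r}$, the only generators divisible by $u$ are $X_{N_T(s)}$ and $X_{N_T(r)}$, so that
$$N_{u,I} = \gen{X_{N_T(v)} \mid v \in V_{odd}(T)\setminus\set{s,r}} \quad\text{and}\quad C_{u,I} = \gen{X_{N_T(s)}/u,\ X_{N_T(r)}/u} + N_{u,I}.$$
Here I use the displayed (possibly non-minimal) generators of $I$; since any redundant generator not divisible by $u$ is a multiple of a minimal generator not divisible by $u$, and likewise after dividing by $u$ for those divisible by $u$, the ideals $N_{u,I}$ and $C_{u,I}$ are unaffected by redundancy, so this computation is legitimate.

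For the equality $C_{u,I} = \calN_{odd}(T',T)$, I would expand the induced ideal for $T' = T\setminus\set{u}$: its index set is $V(T')\cap V_{odd}(T) = V_{odd}(T)$ (as $u \in V_2 \subseteq V_{even}(T)$), and each generator is $X_{N_{T'}(v)} = X_{N_T(v)\setminus\set{u}}$. Splitting the index set into $v \in \set{s,r}$, which contributes $X_{N_T(s)}/u$ and $X_{N_T(r)}/u$, and $v \notin \set{s,r}$, for which $u \notin N_T(v)$ so that the generator $X_{N_T(v)}$ is unchanged, reproduces exactly the generating set of $C_{u,I}$ displayed above.

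Finally, for $N_{u,I} = \calN_{odd}(T'',T) = \calN_{odd}(T'')$, the key point — and the one requiring care — is that deleting $N[u] = \set{u,s,r}$ leaves the open neighborhood of every surviving odd-height vertex unchanged. Indeed, for $v \in V_{odd}(T)\setminus\set{s,r}$ the $T$-neighbors of $v$ lie in $V_{even}(T)$ by balancedness, so $s,r \notin N_T(v)$; and $u \in N_T(v)$ would force $v \in N_T(u) = \set{s,r}$, which is excluded. Hence $N_{T''}(v) = N_T(v)$, giving $\calN_{odd}(T'',T) = \gen{X_{N_T(v)} \mid v \in V_{odd}(T)\setminus\set{s,r}} = N_{u,I}$. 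The remaining equality follows since Theorem~\ref{thm. T - N[u] is delta forest} makes $T''$ a TD-unmixed balanced forest and Corollary~\ref{cor. odd in T-N[u] is odd in T} gives $V_{odd}(T'') = V_{odd}(T)\setminus N_T(u) = V_{odd}(T)\setminus\set{s,r}$, so the index set of $\calN_{odd}(T'')$ agrees with that of $\calN_{odd}(T'',T)$ and the two ideals have identical generators. The only genuine obstacle is this last bookkeeping about which neighborhoods survive the deletion; everything else is direct substitution into the definitions.
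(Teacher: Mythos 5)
Your proof is correct and follows essentially the same route as the paper's: identify $N_T(u) = \set{s,r}$ with $\hh(s)=1$, $\hh(r)=3$, observe that $X_{N_T(s)}$ and $X_{N_T(r)}$ are the only generators divisible by $u$, compute $C_{u,I}$ and $N_{u,I}$ directly, and invoke Theorem~\ref{thm. T - N[u] is delta forest} and Corollary~\ref{cor. odd in T-N[u] is odd in T} for the final equality. In fact you supply details the paper leaves implicit (the derivation of the heights of $u$'s neighbors, the insensitivity of $C_{u,I}$ and $N_{u,I}$ to redundant generators, and the verification that deleting $N[u]$ preserves the neighborhoods of the surviving odd-height vertices), so nothing is missing.
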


\begin{proof}
    Since $\deg_T(u) = 2$, there exist vertices $r \in V_3(T)$ and $s \in V_1(T)$ such that $N_T(u) = \set{r,s}$.
    We can write
    $$
    I = \gen{X_{N(v)}\ :\ v \in V_{odd}(T)\setminus \set{r,s}} + \gen{X_{N(r)},X_{N(s)}}.
    $$
    The monomials $X_{N(r)}$ and $X_{N(s)}$ are the only minimal generators of $I$ divisible by $u$.
    Hence,
    \begin{align*}
        C_{u,I} = \gen{X_{N(v)}\ :\ v \in V_{odd}(T)\setminus \set{r,s}} + \gen{\frac{X_{N(r)}}{u},\frac{X_{N(s)}}{u}} = \calN_{odd}(T',T)
    \end{align*}
    by Definition~\ref{def. induced oni} and the construction of $T'$.
    Similarly, we have
    \begin{align*}
        N_{u,I} := \gen{X_{N(v)}\ :\ v \in V_{odd}(T)\setminus \set{r,s}} = \calN_{odd}(T'',T),
    \end{align*}
    since $V(T'') \cap V_{odd}(T) = V_{odd}(T) \setminus \set{r,s}$ by Corollary~\ref{cor. odd in T-N[u] is odd in T}, and the deletion of $N[u] = \set{u,r,s}$ from $T$ does not affect the open neighborhoods of vertices in $V_{odd}(T)\setminus \set{r,s}$.
    Again, by Corollary~\ref{cor. odd in T-N[u] is odd in T}, we conclude that $\calN_{odd}(T'',T) = \calN_{odd}(T'')$.
\end{proof}

\begin{example}\label{ex. 10}
    Consider the graphs $T$ and $T' = T\setminus \set{u_3}$ from Example~\ref{ex. 9}.
    Set $I = \calN_{odd}(T)$.
    Since $\deg_T(u_3) = 2$, we have $C_{u_3,I} = \calN_{odd}(T',T)$, as shown in Example~\ref{ex. 9}.
    Now let $T'' = T \setminus N[u_3]$ (see Figure~\ref{fig: 10}).
    \begin{figure}[ht]
        \centering
        \includegraphics[width=0.3\linewidth]{figures/fig_9a.pdf}
        \qquad
        \vline
        \qquad
        \includegraphics[width=0.3\linewidth]{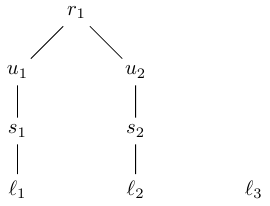}
        \caption{$T$ (left) and $T'' = T \setminus N[u_3]$ (right)}
        \label{fig: 10}
    \end{figure}
    From Example~\ref{ex. 9}, we obtained $N_{u_3,I} = \gen{\ell_1u_1,\ell_2u_2, u_1u_2}$.
\end{example}

By Theorem~\ref{thm. T - N[u] is delta forest}, $N_{u,I}$ is the odd-open neighborhood ideal of a TD-unmixed balanced forest.
However, the situation is more complicated for $C_{u,I}$ (associated with $T'$).
We will address this case in the proof of Theorem~\ref{thm. oni is GVD}, which constitutes the core of the argument.

\begin{theorem}\label{thm. oni is GVD}
    Let $T$ be a TD-unmixed balanced tree.
    Then $I := \calN_{odd}(T) \subseteq \kk[V(T)]$ is GVD.
\end{theorem}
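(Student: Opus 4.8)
The plan is to induct on the number of vertices $|V(T)|$. By Theorem~\ref{theorem. char. of unmixed delt. trees} together with Example~\ref{ex. unmixed delta trees}, every TD-unmixed balanced tree has height $0$, $1$, or $3$, and since $T$ is TD-unmixed the ideal $I = \calN_{odd}(T)$ is unmixed (the even-stable complex is pure). The cases $\hh(T) \in \set{0,1}$ are exactly Lemma~\ref{lem. height 0 and 1 are GVD} and serve as the base of the induction. So I would assume $\hh(T) = 3$ and that the statement holds for every TD-unmixed balanced tree with fewer vertices (hence, by Theorem~\ref{theorem. I + J is GVD}, for every TD-unmixed balanced forest with fewer vertices).

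First I would invoke Theorem~\ref{theorem. height 2 vertex of degree 2 exists} to pick $u \in V_2$ with $\deg_T(u) = 2$, and write $N_T(u) = \set{r,s}$ with $r \in V_3$ and $s \in V_1$, as forced by Conditions (2) and (3) of Theorem~\ref{theorem. char. of unmixed delt. trees}. I would then take the geometric vertex decomposition of $I$ with respect to $u$. Since $I$ is a squarefree monomial ideal, the required identity $I = C_{u,I} \cap (N_{u,I} + \gen{u})$ holds automatically: writing $I$ as the sum of $N_{u,I}$ and the ideal generated by the generators divisible by $u$, both inclusions follow by a routine check of which generators divide a given monomial. Hence it remains to verify that $C_{u,I}$ and $N_{u,I}$ are GVD in $R/\gen{u}$, and because both are generated by monomials not involving $u$, it suffices to prove they are GVD outright.

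The ideal $N_{u,I}$ is handled by the induction hypothesis: by Lemma~\ref{lem. Cu,I and Nu,I} we have $N_{u,I} = \calN_{odd}(T'')$ with $T'' = T \setminus N[u]$, which is a TD-unmixed balanced forest on fewer vertices by Theorem~\ref{thm. T - N[u] is delta forest}; decomposing $T''$ into components and applying the induction hypothesis to each, together with Theorem~\ref{theorem. I + J is GVD}, gives that $N_{u,I}$ is GVD. The delicate case is $C_{u,I} = \calN_{odd}(T',T)$, where $T' = T \setminus \set{u}$ is \emph{not} balanced in general. Here I would use that $T \setminus \set{u}$ has exactly two connected components (Theorem~\ref{theorem. connected components of tree}): a star $C_s = N[s] \setminus \set{u}$ centered at $s$, whose only vertex of odd $T$-height is $s$, and the component $C_r \ni r$. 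Each leaf of $s$ has $s$ as its unique neighbor, so the variables occurring in the $C_s$-generator $X_{N_{T'}(s)}$ are disjoint from those in the remaining generators; thus $C_{u,I} = \gen{X_{N_{T'}(s)}} + \calN_{odd}(C_r, T)$ is a sum, over disjoint variable sets, of a principal product-of-variables ideal (GVD by Lemma~\ref{lem. prod of var is GVD}) and $\calN_{odd}(C_r, T)$. By Theorem~\ref{theorem. I + J is GVD} it therefore suffices to show $\calN_{odd}(C_r, T)$ is GVD.

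The crux, and the step I expect to be the main obstacle, is to show that $C_r = T \setminus N[s]$, equipped with the height function of $T$, is itself an honest TD-unmixed balanced tree of height $3$ with strictly fewer vertices, so that $\calN_{odd}(C_r, T) = \calN_{odd}(C_r)$ and the induction hypothesis applies. This demands a height-preservation argument in the spirit of Proposition~\ref{prop. T - r is delta forest} and Theorem~\ref{thm. T - N[u] is delta forest}: one must check that deleting the pendant star $N[s]$ leaves the distance from every surviving vertex to a nearest leaf unchanged — the only vertex losing a branch is $r$, which still reaches a leaf at distance $3$ through any other neighbor $u_i \in V_2$ — so that $\hh_{C_r}(x) = \hh_T(x)$ for all $x \in V(C_r)$, and then that Conditions (1)--(3) of Theorem~\ref{theorem. char. of unmixed delt. trees} are inherited by $C_r$. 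Once this structural fact is established, the height function of $C_r$ agrees with that of $T$ on $V(C_r)$, the odd vertices coincide, $\calN_{odd}(C_r)$ is GVD by induction, and the proof is complete.
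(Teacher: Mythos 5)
Your skeleton matches the paper's proof: decompose $I$ geometrically at a degree-two vertex $u \in V_2$ (via Theorem~\ref{theorem. height 2 vertex of degree 2 exists}), handle $N_{u,I}$ by induction through Theorem~\ref{thm. T - N[u] is delta forest}, and split $C_{u,I}$ over the two components of $T \setminus \set{u}$. Your two shortcuts are also fine: the identity $I = C_{u,I} \cap (N_{u,I} + \gen{u})$ does hold automatically for square-free monomial ideals, and the star component does contribute a principal ideal on a disjoint set of variables. The genuine gap is exactly the step you call the crux: the claim that $C_r = T \setminus N[s]$, with the height function inherited from $T$, is \q{an honest TD-unmixed balanced tree of height 3} is \emph{false} whenever $\deg_T(r) = 2$. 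Your height-preservation argument tacitly assumes $r$ retains at least two neighbors; if $N_T(r) = \set{u,u'}$, then in $C_r$ the vertex $r$ has the single neighbor $u'$, so $r$ becomes a leaf, $\hh_{C_r}(r) = 0 \neq 3$, and the heights cascade. Concretely, take $T = P_6$ with vertices $0,1,\dots,6$, and $u = 2$, $s = 1$, $r = 3$. Then $C_r$ is the path on $\set{3,4,5,6}$; the adjacent vertices $4$ and $5$ both have height $1$ in $C_r$, so $C_r$ is not even balanced, and $\calN_{odd}(C_r,T) = \gen{X_{\set{4}}}$ while $\calN_{odd}(C_r) = \gen{X_{\set{3,5}}, X_{\set{4,6}}}$. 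So $\calN_{odd}(C_r,T) \neq \calN_{odd}(C_r)$ and the induction hypothesis cannot be invoked. This is not a fringe case: $P_6$ itself and every height-3 vertex produced by the operation $\mathcal{O}(\cdot,v)$ with $\hh(v) = 2$ in Theorem~\ref{thm. constructing hh = 3 delt. trees} has degree $2$, so the case is unavoidable.

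This is precisely the case to which the paper devotes a separate argument, and it is the patch your proof needs. When $\deg_T(r) = 2$, the generator of $\calN_{odd}(C_r,T)$ coming from $r$ is the single variable $u'$, and every generator coming from an odd vertex adjacent to $u'$ is divisible by $u'$, hence redundant; this yields
$$
\calN_{odd}(C_r,T) = \calN_{odd}\bigl(C_r \setminus N_{C_r}[u'],\, T\bigr) + \gen{u'},
$$
and $C_r \setminus N_{C_r}[u']$ \emph{is} a TD-unmixed balanced forest by Theorem~\ref{thm. T - N[u] is delta forest}, so the induction hypothesis together with Corollary~\ref{cor. GVD SMI plus a var is GVD} and Theorem~\ref{theorem. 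I + J is GVD} closes the case. Your argument as written is correct (and coincides with the paper's) when $\deg_T(r) > 2$, but without this extra redundancy argument for $\deg_T(r) = 2$ the induction does not go through.
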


\begin{proof}
    If $\hh(T) < 3$, then Lemma~\ref{lem. height 0 and 1 are GVD} covers the case.
    So suppose that $\hh(T) = 3$.
    Then there exists $u \in V_2(T)$ with $N(u) = \set{r,s}$, where $\hh(r) = 3$ and $\hh(s) = 1$.
    Let $k := |V_2(T)|$.
    We proceed by induction on $k$.
    Since $T$ is a TD-unmixed balanced tree of height 3, $k$ is at least 2.
    
    First, suppose that $k = 2$.
    Then $T$ must be a balanced tree of the form shown in Figure~\ref{fig. basecase}, by Theorem~\ref{thm. constructing hh = 3 delt. trees}.
    \begin{figure}[ht]
        \centering
        \includegraphics[width = 0.5\textwidth]{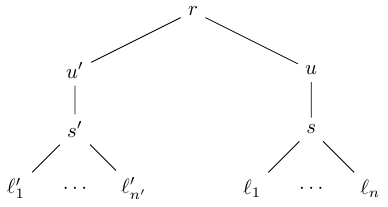}
        \caption{TD-unmixed balanced tree with two height 2 vertices}
        \label{fig. basecase}
    \end{figure}
    Using the vertex labeling in Figure~\ref{fig. basecase}, we have
    \begin{align*}
        I &= \calN_{odd}(T) = \gen{uu', u'\ell'_1\cdots \ell'_{n'}, u\ell_1\cdots \ell_{n}},\\
        C_{u,I} &= \gen{u', u'\ell'_1\cdots \ell'_{n'}, \ell_1\cdots \ell_{n}} = \gen{u',\ell_1\cdots \ell_{n}},\\
        N_{u,I} &= \gen{u'\ell'_1\cdots \ell'_{n'}}.
    \end{align*}
    One can verify that $I = C_{u,I} \cap (N_{u,I} + \gen{u})$. Furthermore, the ideals $N_{u,I}$ and $C_{u,I}$ are GVD by Lemma~\ref{lem. prod of var is GVD} and Corollary~\ref{cor. GVD SMI plus a var is GVD}, respectively. Thus, $I$ is GVD.

    Now suppose that $k > 2$.
    Let $T' := T\setminus \set{u}$ and $T'' := T\setminus N[u]$.
    By Lemma~\ref{lem. Cu,I and Nu,I}, we have $C_{u,I} = \calN_{odd}(T',T)$ and $N_{u,I} = \calN_{odd}(T'',T)$.
    By Theorem~\ref{thm. T - N[u] is delta forest}, $T''$ is a TD-unmixed balanced forest where the number of height 2 vertices in any connected component is less than $k$. Hence, $N_{u,I}$ is GVD by Theorem~\ref{theorem. I + J is GVD} and the induction hypothesis.

    Next, consider $\calN_{odd}(T',T)$.
    Since $\deg_T(u) = 2$, $T'$ has two connected components: $T_1$, which contains $r$, and $T_2$, which contains $s$.
    Hence we can write 
    $$
    \calN_{odd}(T',T) = \calN_{odd}(T_1,T) + \calN_{odd}(T_2, T).
    $$
    
    The connected component $T_2$ is isomorphic to a star graph if $|N_T(s) \cap V_0(T)| \geq 2$, which is a TD-unmixed balanced tree of height 1.
    Thus, $\calN_{odd}(T_2,T) = \calN_{odd}(T_2)$, which is GVD by Lemma~\ref{lem. height 0 and 1 are GVD}.
    If $N_T(s) \cap V_0(T) = \set{\ell}$ for some $\ell \in V_0(T)$, then $\calN_{odd}(T_2,T) = \gen{\ell}$, which is GVD by definition.
    
    If $\deg_T(r) > 2$, then $r$ is not a leaf in $T_1$. In this case, $T_1$ is a TD-unmixed balanced tree of height 3 (similar to the proof of Proposition~\ref{prop. T - r is delta forest}).
    Consequently, $\calN_{odd}(T_1,T) = \calN_{odd}(T_1)$, which is GVD by the induction hypothesis since $T_1$ has fewer than $k$ height 2 vertices.

    Now suppose that $\deg_T(r) = 2$.
    Let $u' \in V_2(T)$ be the vertex such that $N_T(r) = \set{u,u'}$.
    Let $T_1'$ be the connected component of $T\setminus \set{r}$ containing $u'$.
    Then $T_1$ is isomorphic to the graph obtained from $T_1'$ by attaching the leaf $r$ to $u'$.
    Thus, by setting $T_1'' := T_1'\setminus N_{T_1'}[u']$, we obtain
    \begin{align*}
        \calN_{odd}(T_1,T) &= \calN_{odd}(T_1',T) + \gen{u'}\\
        &= \gen{X_{N_{T_1'}(v)}\ :\ v \in (V(T_1') \cap V_{odd}(T))\setminus N_{T_1'}(u')} \tag{redundant}\\
        &\ \ \ + \gen{X_{N_{T_1'}(v)}\ :\ v \in N_{T_1'}(u')} + \gen{u'}\\
        &= \gen{X_{N_{T_1'}(v)}\ :\ v \in (V(T_1') \cap V_{odd}(T))\setminus N_{T_1'}(u')} + \gen{u'}\\
        &= \calN_{odd}(T_1'',T) + \gen{u'},
    \end{align*}
    where the second equality holds because the generators in the second term are divisible by $u'$ (corresponding to vertices adjacent to $u'$), making them redundant in the presence of $\gen{u'}$.
    Since $T_1''$ is obtained by removing $N[u']$ from $T_1'$, $T_1''$ is a TD-unmixed balanced forest by Theorem~\ref{thm. T - N[u] is delta forest}.
    Thus, $\calN_{odd}(T_1'',T) = \calN_{odd}(T_1'')$, which is GVD by the induction hypothesis. Consequently, $\calN_{odd}(T_1,T)$ is GVD by Corollary~\ref{cor. GVD SMI plus a var is GVD}.

    Finally, we verify the decomposition:
    \begin{align*}
        C_{u,I} \cap (N_{u,I} + \gen{u}) &= \left(\gen{X_{N_T(v)}\ :\ v \in V_{odd}(T) \setminus \set{r,s}} + \gen{\frac{X_{N_T(r)}}{u}, \frac{X_{N_T(s)}}{u}}\right)\\
        &\ \ \ \cap \left(\gen{X_{N_T(v)}\ :\ v \in V_{odd}(T) \setminus \set{r,s}} + \gen{u}\right)\\
        &= \gen{\lcm(X_{N_T(v)},X_{N_T(v')})\ :\ v,v' \in V_{odd}(T) \setminus \set{r,s}, v \neq v'}\\
        &\ \ \ + \gen{X_{N_T(v)}\ :\ v \in V_{odd}(T) \setminus \set{r,s}}\\
        &\ \ \ + \gen{u \cdot X_{N_T(v)}\ :\ v \in V_{odd}(T) \setminus \set{r,s}} + \gen{X_{N_T(s)},X_{N_T(r)}}\\
        &\ \ \ + \gen{\lcm\left(X_{N_T(v)},\frac{X_{N_T(v')}}{u}\right)\ :\ v \in V_{odd}(T)\setminus \set{r,s}, v' \in \set{r,s}}\\
        &= \gen{X_{N_T(v)}\ :\ v \in V_{odd}(T) \setminus \set{r,s}} + \gen{X_{N_T(s)},X_{N_T(r)}}\\
        &= I.
    \end{align*}
    Therefore, $I$ is GVD, and consequently, the even-stable complex $\calS_{even}(T)$ is vertex decomposable.
\end{proof}

\begin{remark}
    The condition $\deg_T(u) = 2$ for the choice of $u \in V_2(T)$ is not strictly necessary for the proof of Theorem~\ref{thm. oni is GVD}.
    We include this restriction to simplify the computation of the geometric vertex decomposition and to reduce the number of cases we must consider.
\end{remark}

\section{Open neighborhood ideals and facet ideals of simplicial trees}\label{sec. ONI and facet ideals}

In this section, we explore how open neighborhood ideals of TD-unmixed trees appear in other settings.
Previously, in \cite{COURAGE}, it was noted that every Cohen--Macaulay edge ideal of a tree is the odd-open neighborhood ideal of a TD-unmixed balanced tree.
In this work, we identify a broader class of ideals that properly contains the odd-open neighborhood ideals of TD-unmixed balanced trees.
We begin with a definition.

\begin{definition}\label{def. facet ideal}
    Let $\Delta$ be a simplicial complex on $V$.
    The \emph{facet ideal} of $\Delta$ is the square-free monomial ideal $\scrF(\Delta)$ in $\kk[V]$ generated by the facets of $\Delta$:
    $$
    \scrF(\Delta) := \gen{ X_{F} \ :\ F \in \calF(\Delta)}.
    $$
\end{definition}

Facet ideals were first introduced by Faridi in \cite{MR1935027}, viewing simplicial complexes as a generalization of graphs (treating $\Delta$ as a hypergraph with edge set $\calF(\Delta)$).
In this setting, a \emph{vertex cover} of $\Delta$ is a subset $S \subset V$ such that $S \cap F \neq \emptyset$ for all $F \in \calF(\Delta)$. We say $S$ is \emph{minimal} if no proper subset of $S$ is a vertex cover of $\Delta$; this is equivalent to the definition of a vertex cover for hypergraphs.
We say $\Delta$ is \emph{unmixed} if every minimal vertex cover of $\Delta$ has the same cardinality.
It follows that facet ideals admit the following primary decomposition.

\begin{theorem}
    Let $\Delta$ be a simplicial complex on $V$, and let $\VC(\Delta)$ and $\MVC(\Delta)$ be the sets of all vertex covers and minimal vertex covers of $\Delta$, respectively.
    Then
    $$
    \scrF(\Delta) = \bigcap_{S \in \VC(\Delta)} \gen{S} = \bigcap_{S \in \MVC(\Delta)}\gen{S},
    $$
    where the second decomposition is irredundant.
\end{theorem}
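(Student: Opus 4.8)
The plan is to carry out everything at the level of monomials. Both $\scrF(\Delta)$ and each $\gen{S}$ are monomial ideals, hence so are the two intersections, and a monomial ideal is determined by the set of monomials it contains; so it suffices to compare the two sides monomial by monomial. First I would establish the inclusion $\scrF(\Delta) \subseteq \bigcap_{S \in \VC(\Delta)} \gen{S}$. This direction is immediate: for a generator $X_F$ with $F \in \calF(\Delta)$ and any vertex cover $S$, the defining condition $S \cap F \neq \emptyset$ furnishes a variable $v \in S \cap F$ with $v \mid X_F$, whence $X_F \in \gen{v} \subseteq \gen{S}$.

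For the reverse inclusion I would argue by contraposition using a complementation trick. Suppose a monomial $m$ lies in $\bigcap_{S \in \VC(\Delta)} \gen{S}$ but $m \notin \scrF(\Delta)$, and set $T := \supp(m)$. Since no generator $X_F$ divides $m$, no facet satisfies $F \subseteq T$; equivalently every facet meets $V \setminus T$, so $V \setminus T$ is itself a vertex cover. Then $m \in \gen{V \setminus T}$ by hypothesis, which is impossible, because $\supp(m) = T$ is disjoint from $V \setminus T$ and so no variable of $V \setminus T$ divides $m$. The degenerate case $T = V$ forces some facet $F \subseteq V = T$ as soon as $\Delta$ has a facet, so $m \in \scrF(\Delta)$ after all. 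This contradiction yields $\bigcap_{S \in \VC(\Delta)} \gen{S} \subseteq \scrF(\Delta)$ and establishes the first equality.

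The second equality reduces the intersection to minimal vertex covers. Since $\MVC(\Delta) \subseteq \VC(\Delta)$, intersecting over the smaller index set can only enlarge the ideal, giving $\bigcap_{S \in \VC(\Delta)}\gen{S} \subseteq \bigcap_{S \in \MVC(\Delta)}\gen{S}$. Conversely, every vertex cover $S'$ contains a minimal vertex cover $S$ (as $V$ is finite), and $S \subseteq S'$ gives $\gen{S} \subseteq \gen{S'}$; hence each factor $\gen{S'}$ on the left already dominates a factor appearing on the right, which yields the opposite inclusion and thus equality.

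It remains to prove irredundancy, which I expect to be the one step requiring a genuine construction rather than bookkeeping. Fix $S_0 \in \MVC(\Delta)$; I would exhibit a monomial lying in every other component but not in $\gen{S_0}$, namely $X_{V \setminus S_0}$. For any other $S \in \MVC(\Delta)$ we have $S \not\subseteq S_0$ (otherwise $S \subseteq S_0$ with $S$ a cover would contradict minimality of $S_0$, forcing $S = S_0$), so $S \cap (V \setminus S_0) \neq \emptyset$ and therefore $X_{V \setminus S_0} \in \gen{S}$. On the other hand $\supp(X_{V \setminus S_0}) = V \setminus S_0$ is disjoint from $S_0$, so $X_{V \setminus S_0} \notin \gen{S_0}$; thus $\gen{S_0}$ cannot be omitted. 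The only case in which $X_{V \setminus S_0} = 1$ is $S_0 = V$, which forces every singleton to be a facet and hence $\MVC(\Delta) = \set{V}$, making irredundancy vacuous. The main obstacle, such as it is, lies in selecting this witness monomial and verifying the incomparability of distinct minimal covers; the remaining inclusions are formal.
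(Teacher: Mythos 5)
Your proof is correct and complete; note that the paper gives no proof of this theorem at all, stating it as a known consequence of viewing $\Delta$ as a hypergraph with edge set $\calF(\Delta)$ --- the standard duality between vertex covers and the minimal primes of a square-free monomial ideal, exactly parallel to the paper's decomposition of $\calN(G)$ via TD-sets. Your monomial-by-monomial verification --- the complementation trick showing that $V\setminus\supp(m)$ is a vertex cover whenever no facet is contained in $\supp(m)$, and the witness monomial $X_{V\setminus S_0}$ establishing irredundancy via incomparability of distinct minimal covers --- is precisely the standard argument for that duality, and your handling of the degenerate cases ($\supp(m) = V$ and $S_0 = V$) is sound.
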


Faridi defined \emph{simplicial trees}, which are a generalization of tree graphs.

\begin{notation}
    Let $\Delta$ be a simplicial complex.
    We denote the simplicial complex obtained by removing a facet $F \in \calF(\Delta)$ by
    $$
    \Delta \setminus \gen{F} := \gen{F' : F' \in \calF(\Delta) \setminus \set{F}}.
    $$
    More generally, for a subset $S \subset \calF(\Delta)$, we define
    $$
    \Delta \setminus \gen{S} := \gen{F : F \in \calF(\Delta)\setminus S}.
    $$
\end{notation}

\begin{definition}\label{def. simp. trees}
    Let $\Delta$ be a simplicial complex.
    We say $\Delta$ is \emph{connected} if for any two vertices $u,v \in V$, there exists a sequence of facets $F_1,\dots,F_k \in \calF(\Delta)$ such that $u \in F_1$, $v \in F_k$, and $F_{i} \cap F_{i+1} \neq \varnothing$ for all $1 \leq i < k$.
    A facet $L \in \calF(\Delta)$ is a \emph{leaf} if $\Delta = \gen{L}$ (i.e., $\Delta$ is a simplex), or if there exists a facet $G \in \calF(\Delta) \setminus \set{L}$ such that for all $F \in \calF(\Delta) \setminus \set{L}$,
    $$
    L \cap F \subseteq L \cap G.
    $$
    The facet $G$ is called a \emph{joint} of $L$.
    A connected simplicial complex $\Delta$ is called a \emph{simplicial tree} if for any nonempty subset $S \subseteq \calF(\Delta)$, the simplicial complex $\Delta \setminus \gen{S}$ has a leaf.
    A non-connected simplicial complex $\Delta$ is a \emph{simplicial forest} if every connected component of $\Delta$ is a simplicial tree.
\end{definition}

In \cite{MR2121028}, Faridi provided a characterization of unmixed simplicial trees and showed that a simplicial tree is unmixed if and only if its facet ideal is Cohen--Macaulay.
We show that every odd-open neighborhood ideal of a TD-unmixed balanced tree can be identified with the facet ideal of a simplicial tree.
To do this, we utilize another equivalent definition of simplicial trees established by Cabora, Faridi, and Selinger \cite{MR2284286}.

\begin{definition}[\protect{\cite[Definition 3.1, Definition 3.4]{MR2284286}}]
    Let $\Delta$ be a non-empty simplicial complex.
    We say $\Delta$ is a \emph{cycle} if $\Delta$ has no leaves, but the subcomplex generated by every proper subset of $\calF(\Delta)$ has a leaf.
    Two distinct facets $F,G \in \calF(\Delta)$ are \emph{strong neighbors}, denoted $F \sim_{\Delta} G$, if for all $H \in \calF(\Delta)$, the condition $F \cap H \subseteq G$ implies $H = F$ or $H = G$.
\end{definition}

From the definitions above, it follows that a nonempty connected simplicial complex $\Delta$ is a simplicial tree if and only if it contains no cycles.
We utilize the following structural property of cycles in the proof of the main result of this section, Theorem~\ref{thm. odd ONI is facet ideal}.

\begin{theorem}[\protect{\cite[Proposition 3.11]{MR2284286}}]\label{thm. char of simp cycles}
    Let $\Delta$ be a simplicial complex that is a cycle, and let $n = |\calF(\Delta)|$.
    Then $n \geq 3$, and the facets of $\Delta$ can be enumerated as $\calF(\Delta) = \set{F_1,F_2,\dots,F_n}$ such that
    $$
    F_1 \sim_{\Delta} F_2 \sim_{\Delta} \cdots \sim_{\Delta} F_n \sim_{\Delta} F_1,
    $$
    and $F_i \not\sim_{\Delta} F_j$ for all other pairs of indices.
\end{theorem}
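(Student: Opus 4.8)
The plan is to treat the two assertions separately. Since a cycle is by definition nonempty, $n \geq 1$. If $n = 1$, then $\Delta = \gen{F_1}$ is a simplex, so $F_1$ is a leaf, contradicting that a cycle has no leaves. If $n = 2$, write $\calF(\Delta) = \set{F_1, F_2}$; choosing $G = F_2$, the only facet tested in the leaf condition for $F_1$ is $F_2$ itself, and $F_1 \cap F_2 \subseteq F_1 \cap F_2$ holds trivially, so $F_1$ is a leaf with joint $F_2$, again a contradiction. Hence $n \geq 3$.

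For the cyclic structure, I form the graph $\mathcal{G}$ on the vertex set $\calF(\Delta)$ whose edges are the strong-neighbour pairs (recall that $F \sim_\Delta G$ asserts that $F \cap G$ lies in no facet other than $F$ and $G$). A connected $2$-regular simple graph on $n \geq 3$ vertices is a single $n$-cycle, in which each vertex is adjacent to exactly its two cyclic neighbours; so it suffices to prove that $\mathcal{G}$ is connected and that every facet has exactly two strong neighbours, after which the enumeration $F_1 \sim_\Delta \cdots \sim_\Delta F_n \sim_\Delta F_1$ and the assertion that $F_i \not\sim_\Delta F_j$ for all other pairs are automatic. The tool I would use is the lemma that, for each facet $F$, the strong neighbours of $F$ are exactly the leaves of the deletion $\Delta \setminus \gen{F}$. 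Because deleting $F$ leaves a proper subcollection of the cycle $\Delta$, every subcollection of $\Delta \setminus \gen{F}$ again omits $F$ and hence has a leaf, so $\Delta \setminus \gen{F}$ is a simplicial tree (its connectedness inherited from $\Delta$) with at least two leaves; walking from a facet to a strong neighbour and iterating then threads all facets together, giving connectivity of $\mathcal{G}$ and the cyclic order.

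The step I expect to be the main obstacle is showing that each facet has at most two strong neighbours. Two facts drive this, and both genuinely use that $\Delta$ is a cycle rather than merely that $\Delta \setminus \gen{F}$ is a tree: the engine lemma above, and the fact that $\Delta \setminus \gen{F}$ is \emph{path-like}, i.e. has exactly two leaves (a general simplicial tree may branch and carry many). I would obtain both by contradiction from the defining property of a cycle: a branching of $\Delta \setminus \gen{F}$, or a leaf of it not meeting $F$, would let me either restore a leaf of $\Delta$ itself or exhibit a proper subcollection of $\Delta$ with no leaf, each impossible. Once this local analysis is in hand --- recorded by a vertex of each intersection $F_i \cap F_{i+1}$ lying in no other facet --- the graph $\mathcal{G}$ is connected and $2$-regular, hence a single $n$-cycle; this private vertex confirms $F_i \sim_\Delta F_{i+1}$, and no other pair of facets is adjacent.
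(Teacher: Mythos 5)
First, a point of reference: the paper itself does not prove this statement --- it is imported verbatim from Caboara--Faridi--Selinger \cite{MR2284286}, Proposition 3.11 --- so your proposal has to stand on its own merits. Your first half does: the argument for $n \geq 3$ (one facet makes $\Delta$ a simplex, hence a leaf; with two facets, $F_1 \cap F_2 \subseteq F_1 \cap F_2$ exhibits $F_2$ as a joint of $F_1$) is complete, and reducing the cyclic enumeration to ``the strong-neighbor graph $\mathcal{G}$ is connected and $2$-regular'' is a valid reformulation. (Incidentally, you silently replaced the paper's printed definition of $\sim_\Delta$, namely ``$F \cap H \subseteq G$ implies $H \in \set{F,G}$,'' by the condition that $F \cap G$ lies in no third facet. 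You were right to do so: the printed condition is a typo --- it is asymmetric in $F$ and $G$, and for the square $C_4$ of edges it would make adjacent edges fail to be strong neighbors while opposite edges succeed, contradicting the very theorem being proved.)

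The gap is that everything which makes the theorem nontrivial is deferred. Your ``engine lemma'' and the claim that $\Delta \setminus \gen{F}$ is path-like are exactly the hard content, and they are supported only by the sentence ``I would obtain both by contradiction from the defining property of a cycle.'' Only half of that gesture converts into a proof: if $G$ is a leaf of $\Delta \setminus \gen{F}$ with joint $G'$ and $G \not\sim_\Delta F$, then $F \cap G \subseteq H_0$ for some third facet $H_0$, so $G \cap H \subseteq G \cap G'$ for \emph{every} facet $H \neq G$ of $\Delta$, making $G$ a leaf of $\Delta$ --- contradiction. That yields ``every leaf of the deletion is a strong neighbor of $F$,'' hence the lower bound of two strong neighbors (a forest with at least two facets has at least two leaves). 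But the upper bound --- at most two strong neighbors per facet, equivalently your converse inclusion together with ``exactly two leaves'' --- is precisely where the difficulty of the CFS proposition lives, and nothing in your sketch produces it. Note also that $\Delta \setminus \gen{F}$ is a priori only a simplicial \emph{forest}: your parenthetical ``connectedness inherited from $\Delta$'' is false in general (deleting a facet can disconnect a complex), so ``path-like'' must in addition exclude forests with several components, e.g.\ isolated facets, each of which is a leaf. Finally, connectivity of $\mathcal{G}$ does not follow from ``walking from a facet to a strong neighbour and iterating'': that only traces one component, and a priori $\mathcal{G}$ could be a disjoint union of several smaller cycles. A correct argument exists --- if a nonempty proper subset $A \subsetneq \calF(\Delta)$ were a union of components of $\mathcal{G}$, then $\gen{A}$ would have no leaf, since a leaf of any complex has at most one strong neighbor there (its joint absorbs all its intersections) while strong-neighborliness in $\Delta$ persists when passing to subcollections, so every facet of $A$ keeps its two strong neighbors inside $\gen{A}$; this contradicts the cycle axiom that every proper subcollection has a leaf --- but this argument rests on two lemmas (``a leaf has at most one strong neighbor'' and persistence under subcollections) that your proposal never states, let alone proves.
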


\begin{theorem}\label{thm. odd ONI is facet ideal}
    Let $T = (V,E)$ be a TD-unmixed balanced tree.
    Then there exists an unmixed simplicial tree $\Delta$ on $V_{even}$ such that $\calN_{odd}(T) = \scrF(\Delta)$.
\end{theorem}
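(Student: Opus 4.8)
The plan is to produce the simplicial complex $\Delta$ explicitly from the generators of $\calN_{odd}(T)$ and then verify that it is a simplicial tree via the no-cycle characterization. Recall that $\calN_{odd}(T) = \gen{X_{N_T(v)} : v \in V_{odd}(T)}$ is an ideal in $\kk[V_{even}(T)]$, since every vertex $v$ of odd height has $N_T(v) \subseteq V_{even}(T)$ (as $T$ is balanced). The natural candidate is therefore
$$
\Delta := \gen{N_T(v) : v \in V_{odd}(T)},
$$
the simplicial complex on $V_{even}(T)$ whose facets are (the maximal among) the open neighborhoods $N_T(v)$. With this definition, $\scrF(\Delta) = \calN_{odd}(T)$ holds essentially by construction, once one checks that the minimal generators of $\calN_{odd}(T)$ correspond exactly to the facets of $\Delta$ (i.e., that the inclusion-minimal monomials $X_{N_T(v)}$ match the inclusion-maximal sets $N_T(v)$). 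The two remaining obligations are: $\Delta$ is an \emph{unmixed} complex, and $\Delta$ is a \emph{simplicial tree}.

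For unmixedness, I would invoke the facet-ideal primary decomposition together with the given hypothesis. The minimal vertex covers of $\Delta$ are precisely the minimal sets $S \subseteq V_{even}(T)$ meeting every $N_T(v)$, which by Theorem~\ref{theorem. prime decomp. of ONI} (the $\MOTD$ decomposition) are exactly the minimal odd-TD-sets of $T$ realized inside $V_{even}$. Since $T$ is TD-unmixed, hence odd-TD-unmixed, all minimal odd-TD-sets have the same size, so all minimal vertex covers of $\Delta$ have the same cardinality; thus $\Delta$ is unmixed.

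The hard part will be showing that $\Delta$ contains no cycle, equivalently that $\Delta$ is a simplicial tree. Here I would argue by contradiction using Theorem~\ref{thm. char of simp cycles}: suppose a subcomplex of $\Delta$ is a cycle with facets $F_1 \sim F_2 \sim \cdots \sim F_n \sim F_1$, where each $F_i = N_T(v_i)$ for some $v_i \in V_{odd}(T)$. The strong-neighbor relation $F_i \sim_\Delta F_{i+1}$ forces a nonempty intersection $N_T(v_i) \cap N_T(v_{i+1}) \neq \varnothing$; picking a common even-height neighbor $w_i$ yields edges $\set{v_i, w_i}, \set{w_i, v_{i+1}} \in E(T)$, so $v_i$ and $v_{i+1}$ lie on a path of length $2$ through $w_i$. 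Chaining these around the cycle $v_1, w_1, v_2, w_2, \dots, v_n, w_n, v_1$ produces a closed walk in $T$; the goal is to extract from it an honest cycle in the graph $T$, contradicting the fact that $T$ is a tree. The delicate point is that a closed \emph{walk} need not be a cycle, so I must use the strong-neighbor condition more carefully — specifically that $F_i \not\sim_\Delta F_j$ for non-consecutive indices — to guarantee the intermediate vertices $w_i$ are distinct and that the walk does not backtrack, so that it genuinely closes into a cycle. I expect this combinatorial extraction, leveraging the balanced-tree structure and the height-$\leq 3$ constraints of Theorem~\ref{theorem. char. of unmixed delt. trees} to control how neighborhoods can overlap, to be the technical crux of the proof.
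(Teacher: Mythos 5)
Your construction of $\Delta$, the identification $\scrF(\Delta) = \calN_{odd}(T)$, and your overall plan for the simplicial-tree property (suppose $\Delta$ contains a cycle $\Delta'$, invoke Theorem~\ref{thm. char of simp cycles} to enumerate $\calF(\Delta') = \set{F_1,\dots,F_n}$ with $F_1 \sim_{\Delta'} \cdots \sim_{\Delta'} F_n \sim_{\Delta'} F_1$, write $F_i = N_T(v_i)$, and pull this back to a closed walk in $T$) are exactly the paper's proof. Your unmixedness argument --- minimal vertex covers of $\Delta$ are precisely the minimal odd-TD-sets, which have constant size since $T$ is TD-unmixed, hence odd-TD-unmixed --- is correct, and is in fact more explicit than the paper, which leaves this point implicit.

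The step you defer, however, is a genuine gap in the proposal, and the tool you nominate for it would not close it. The failure of $F_i \sim_{\Delta'} F_j$ for non-consecutive $i,j$ does \emph{not} imply $F_i \cap F_j = \varnothing$: by definition it only guarantees the existence of some facet $H \neq F_i, F_j$ with $F_i \cap H \subseteq F_j$, so it cannot by itself force the intermediate vertices $w_i$ to be distinct. And the danger is real: if all the neighborhoods $N_T(v_i)$ shared a single vertex $w$ (a star configuration), the closed walk $v_1\, w\, v_2\, w \cdots v_n\, w\, v_1$ lives happily inside a tree and contains no cycle whatsoever. What actually closes the argument is the tree-theoretic fact, noted in the paper's remark following this theorem, that $|N_T(u) \cap N_T(v)| \leq 1$ for distinct $u,v$, so every pairwise intersection $F_i \cap F_j$ is empty or a singleton. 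From this one rules out the degenerate configurations: if all pairwise intersections equal the same singleton $\set{w}$, then every facet of $\Delta'$ is a leaf (any other facet serves as a joint), contradicting that a cycle has no leaves; and in general, by the Caboara--Faridi--Selinger structure theorem a cycle is either a ``circle'' (non-consecutive facets disjoint) or a cone over one, and the cone case is impossible here since the apex together with a base intersection would give consecutive facets sharing at least two vertices. In the circle case the $w_i$ are forced and pairwise distinct, so the walk is an honest cycle of length $2n \geq 6$ in $T$, the desired contradiction. To be fair, the paper's own write-up also asserts without comment that the sequence ``induces a cycle in $T$,'' so you have correctly located the delicate point the paper glosses over --- but your proposal leaves it open, and the hinted route through non-strong-neighborliness alone would not suffice.
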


\begin{proof}
    If $\hh(T) \leq 1$, then the claim is trivially true; in these cases, $\calN_{odd}(T)$ is the facet ideal of a simplex or the empty complex $\set{\varnothing}$, both of which are simplicial trees by definition.
    So suppose that $\hh(T) = 3$; recall that there are no TD-unmixed balanced trees of height 2 (see Example~\ref{ex. unmixed delta trees}).
    Let $\Delta$ be the simplicial complex on $V_{even}$ with facets
    $$
    \calF(\Delta) = \Set{N(v)}{v \in V_{odd}}.
    $$
    By the construction of $\Delta$, we have $\calN_{odd}(T) = \scrF(\Delta)$.
    It remains to show that $\Delta$ is a simplicial tree.
    Assume, for the sake of contradiction, that $\Delta$ is not a simplicial tree.
    Then $\Delta$ must contain a cycle $\Delta'$; consequently, $\calF(\Delta') \subseteq \calF(\Delta)$.
    By Theorem~\ref{thm. char of simp cycles}, we can enumerate the facets of $\Delta'$ as $\calF(\Delta') = \set{F_1,F_2,\dots,F_n}$ with $n \geq 3$ satisfying
    \begin{align*}
        F_1 \sim_{\Delta'} F_2 \sim_{\Delta'} \cdots \sim_{\Delta'} F_n \sim_{\Delta'} F_1. \tag{$\star$}
    \end{align*}
    By the construction of $\Delta$, for each $i$ with $1 \leq i \leq n$, there exists a vertex $v_i \in V_{odd}(T)$ such that $N(v_i) = F_i$.
    The condition $F_i \sim_{\Delta'} F_{i+1}$ (and $F_n \sim_{\Delta'} F_1$) implies that the facets intersect non-trivially. 
    Consequently, for each $i \in [n-1]$ (and similarly for the pair $v_n, v_1$), there is a path of length 2 between $v_i$ and $v_{i+1}$ in $T$ passing through a vertex in $F_i \cap F_{i+1} \subseteq V_{even}$.
    Thus, the sequence ($\star$) induces a cycle in $T$, contradicting the fact that $T$ is a tree.
\end{proof}

\begin{remark}
    Since $T$ is a tree, there are no two vertices of odd height $u,v \in V_{odd}$ such that $|N(u) \cap N(v)| \geq 2$.
    Thus, for any two facets $F_1$ and $F_2$ of the simplicial complex $\Delta$ constructed in Theorem~\ref{thm. odd ONI is facet ideal}, we have $|F_1 \cap F_2| \leq 1$.
    Consequently, one can construct an unmixed simplicial tree whose facet ideal cannot be the open neighborhood ideal of a balanced tree (for instance, see \cite[Example 7.5]{MR2121028}).
    Thus, we establish the following strict containment of classes of square-free Cohen--Macaulay ideals:
    $$
    \left\{ \parbox{3cm}{\centering CM edge ideals\\ of trees} \right\} \subsetneq \left\{ \parbox{4.2cm}{\centering CM odd-open neighborhood ideals\\ of balanced trees} \right\} \subsetneq \left\{ \parbox{3.5cm}{\centering CM facet ideals\\ of simplicial trees} \right\}.
    $$
\end{remark}

\begin{remark}
    It is established that the Stanley-Reisner complex associated with the edge ideal of a tree—viewed as a 1-dimensional simplicial tree—is vertex decomposable \cite{EdgeIdeal_VD}. 
    In contrast, it remains an open question whether the Stanley-Reisner complex of the facet ideal of a simplicial tree is always vertex decomposable. 
    In Theorem~\ref{thm. oni is GVD}, we demonstrate that a larger class of simplicial trees admits a vertex decomposable Stanley-Reisner complex.
\end{remark}

\section{Open neighborhood ideals of chordal graphs}\label{sec. ONI of chordal}

In this section, we examine the open neighborhood ideals of chordal graphs.
Recall that for $n \geq 3$, the $n$-\emph{cycle} is the graph $C_n$ with vertex set $V(C_n) = [n]$ and edge set $E(C_n) = \set{\set{1,2},\set{2,3},\dots,\set{n-1,n},\set{1,n}}$.
We say that a graph $G$ is a \emph{cycle graph} if $G$ is isomorphic to some $n$-cycle.
For simplicity, we will treat isomorphic graphs as equal.

\begin{definition}\label{def. chordal graph}
    Let $G = (V,E)$ be a graph.
    We say that a cycle graph $C$ is an \emph{induced cycle} of $G$ if there exists a subset $S \subseteq V$ such that $C = G[S]$.
    We say $G$ is \emph{chordal} if every induced cycle in $G$ is isomorphic to $C_3$.
\end{definition}

A constructive characterization of chordal graphs is given next.

\begin{theorem}[\cite{MR130190,MR270957}]\label{thm. const. char. of chordal graphs}
    Let $G$ be a connected graph on $n \geq 1$ vertices. Then $G$ is chordal if and only if $G$ can be constructed as follows:
    \begin{itemize}
        \item[(1)] Begin with a single vertex $v_1$.
        \item[(2)] For each $i$ from 2 to $n$, repeat the following process:\newline 
        (Step $i$) Add a new vertex $v_i$ and join it to a subset of vertices in $\set{v_1,\dots,v_{i-1}}$ that forms a clique in the graph constructed in Step $i-1$.
    \end{itemize}
\end{theorem}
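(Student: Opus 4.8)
The plan is to prove both directions of the equivalence, since Theorem~\ref{thm. const. char. of chordal graphs} is an ``if and only if'' statement relating the class of chordal graphs to the class of graphs admitting a \emph{simplicial construction ordering} $v_1,\dots,v_n$ (where each $v_i$ is joined to a clique in the graph built so far). The cleaner direction is the backward one. First I would assume $G$ admits such an ordering and argue by induction on $n$ that $G$ is chordal. The inductive step is to show that adding a vertex $v_n$ whose neighborhood $N(v_n) \cap \set{v_1,\dots,v_{n-1}}$ is a clique cannot create any induced cycle of length $\geq 4$. If $C$ were an induced cycle through $v_n$ of length $\geq 4$, then $v_n$ would have two non-adjacent neighbors on $C$; but both neighbors lie in $N(v_n)$, which is a clique, so they must be adjacent --- contradiction. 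Any induced cycle avoiding $v_n$ already lives in the graph from Step $n-1$, which is chordal by the inductive hypothesis. This handles the backward direction completely.

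The forward direction is the substantive one and will be the main obstacle. The standard approach is to show that every chordal graph $G$ has a \emph{perfect elimination ordering}: an ordering of the vertices such that, reading from the last vertex backward, each vertex together with its later-indexed neighbors forms a clique. Reversing such an ordering yields exactly the construction in the theorem. The key structural fact needed is that every chordal graph on $\geq 2$ vertices contains a \emph{simplicial vertex} --- a vertex whose neighborhood is a clique --- and moreover that if $G$ is not complete, it contains two non-adjacent simplicial vertices. I would prove the existence of a simplicial vertex by induction on $|V|$, using the classical argument via minimal vertex separators: if $G$ is complete, every vertex is simplicial; otherwise pick two non-adjacent vertices $a,b$ and a minimal separator $S$, and apply the inductive hypothesis to the components of $G \setminus S$ to extract simplicial vertices lying outside $S$. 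The crucial lemma is that in a chordal graph, every minimal separator induces a clique; this is where chordality is used, via the no-long-induced-cycles hypothesis to connect paths through different components and force chords.

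Once the simplicial vertex existence result is in hand, the forward direction follows by a short induction: let $v_n$ be a simplicial vertex of $G$, so $N(v_n)$ is a clique; the graph $G \setminus v_n$ is an induced subgraph of a chordal graph, hence chordal (induced subgraphs of chordal graphs are chordal, since an induced cycle of a subgraph is an induced cycle of $G$), so by induction it admits a construction ordering $v_1,\dots,v_{n-1}$; appending $v_n$ gives the desired ordering for $G$, because $N(v_n) \cap \set{v_1,\dots,v_{n-1}} = N_G(v_n)$ is a clique. I expect the minimal-separator-is-a-clique lemma to require the most care, as it is the only place the full strength of chordality is invoked, and I would state it as a separate claim within the proof. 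Since this result is a well-known classical theorem (attributed in the excerpt to the cited references), I would aim for a concise self-contained argument along these lines rather than reproving every auxiliary fact in full detail.
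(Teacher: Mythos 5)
The paper does not actually prove this statement: it is quoted as a classical result with citations to Dirac and Rose, so there is no internal proof to compare yours against. Your outline is the standard argument from those sources and is essentially correct. The backward direction (a vertex attached along a clique cannot lie on an induced cycle of length at least four, and cycles avoiding the new vertex are handled by induction) is complete as sketched. The forward direction correctly reduces to Dirac's lemma that a chordal graph has a simplicial vertex (indeed, two non-adjacent ones when the graph is not complete, which is the strengthening your induction genuinely needs so that a simplicial vertex can be found inside a component of $G \setminus S$ rather than in $S$), and you correctly isolate the minimal-separator-is-a-clique lemma as the one place where chordality does real work.

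One detail you should patch: the theorem as stated concerns \emph{connected} graphs, and your forward induction applies the hypothesis to $G \setminus v_n$, so you must check that deleting a simplicial vertex preserves connectedness. It does---a simplicial vertex is never a cut vertex, because its neighborhood is a clique and any path through it can be rerouted through an edge of that clique---but without saying this the inductive hypothesis, which presupposes connectedness, does not formally apply. The same observation guarantees that the clique to which each new vertex is joined can be taken non-empty (since $N_G(v_n) \neq \emptyset$ for a connected graph on at least two vertices), which is what keeps the constructed graph connected and makes the two directions of the equivalence match.
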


For a finite set $V$ and a collection of subsets of $V$, say $\calA = \set{A_1,\dots,A_k} \subseteq 2^V$, we call $\calA$ a \emph{Sperner set} on $V$ if $A_i \not\subseteq A_j$ for all $i \neq j$.
A set $T \subseteq V$ is a \emph{transversal} of $\calA$ if $T \cap A_i \neq \varnothing$ for all $i$.
We say $T$ is \emph{minimal} if no proper subset of $T$ is a transversal of $\calA$.
Let $\tau(\calA)$ denote the set of all minimal transversals of $\calA$.
It is well known that $\tau(\calA)$ is also a Sperner set and that $\tau(\tau(\calA)) = \calA$ \cite[Chapter 2]{MR1013569}.
The following construction is due to Bahad\i r, Ekim, and G\"oz\"upek \cite{MR4333882}.

\begin{const}[Bahad\i r-Ekim-G\"oz\"upek, \protect{\cite[Proposition 2.3]{MR4333882}}]\label{const. any sqfree mon ideal is an ONI}
    Let $V$ be a finite set and let $\calA = \set{A_1,\dots,A_k} \subseteq 2^V$ be a Sperner set such that $V = \bigcup_{i = 1}^k A_i$ and $|A_i| > 1$ for all $i$.
    We construct the graph $G_{\calA}$ as follows:
    \begin{itemize}
        \item[(1)] Begin with a complete graph on the vertex set $V$; that is, add all possible edges between vertices in $V$.
        \item[(2)] Compute $\tau(\calA) = \set{T_1,\dots,T_m}$.
        \item[(3)] For each $T_i \in \tau(\calA)$, add a new vertex $t_i$ and add edges incident to $t_i$ such that $N(t_i) = T_i$.
    \end{itemize}
    Let $G_\calA$ with vertex set $V(G_\calA) = V \cup \set{t_1,\dots,t_m}$ be the resulting graph.
    Then the set of all minimal TD-sets of $G_{\calA}$ is exactly $\calA = \set{A_1,\dots,A_k}$.
\end{const}

\begin{example}\label{ex. BEG alg.}
    Let $V = \set{v_1,v_2,v_3,v_4,v_5}$ and
    $$
    \calA = \set{\set{v_1,v_2,v_3},\set{v_1,v_4}, \set{v_2,v_3,v_5}, \set{v_3,v_4,v_5}}.
    $$
    We construct the (chordal) graph $G_\calA$ whose minimal TD-sets are the sets in $\calA$ using Construction~\ref{const. any sqfree mon ideal is an ONI}.
    \begin{itemize}
        \item[(1)] Begin with a complete graph on the vertices $v_1,\dots,v_5$, as shown in Figure~\ref{fig: 11a}.
        \item[(2)] We have $\tau(\calA) = \set{\set{v_1,v_3},\set{v_1,v_5}, \set{v_2,v_3}, \set{v_2,v_4}, \set{v_3,v_4}}$. We set
        $$
            T_1 = \set{v_1,v_3}, \quad T_2 = \set{v_1,v_5}, \quad T_3 = \set{v_2,v_3}, \quad T_4 = \set{v_2,v_4}, \quad T_5 = \set{v_3,v_4}.
        $$
        \item[(3)] For each $T_i$, add a new vertex $t_i$ and connect it to the vertices in $T_i$, as shown in Figure~\ref{fig: 11b}.
    \end{itemize}
    The resulting graph in Figure~\ref{fig: 11b} is the graph $G_\calA$.
    \begin{figure}[ht]
    \centering
    \begin{subfigure}[b]{0.45\textwidth}
        \centering
        \includegraphics[width=\textwidth]{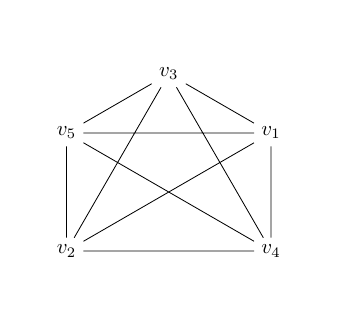}
        \caption{Complete graph on 5 vertices}
        \label{fig: 11a}
    \end{subfigure}
    \hfill 
    \begin{subfigure}[b]{0.45\textwidth}
        \centering
        \includegraphics[width = \textwidth]{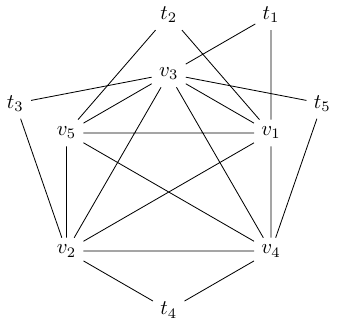}
        \caption{The graph $G_\calA$}
        \label{fig: 11b}
    \end{subfigure}
    \caption{Constructing $G_\calA$}
    \label{fig: 11}
\end{figure}
\end{example}

\begin{remark}
    In Step (1), one only needs to begin with a graph in which every vertex in $V$ is adjacent to at least one vertex in $A_i$ for all $i \in [k]$.
    However, we begin with a complete graph so that the resulting graph is chordal by Theorem~\ref{thm. const. char. of chordal graphs}.
    The condition $|A_i| > 1$ is necessary because a minimal TD-set cannot have size 1 in a simple graph (the open neighborhood of a vertex does not contain the vertex itself).
    If we wished to allow minimal TD-sets of size 1, we would need to permit loops (edges connecting a vertex to itself) in the graph, which would violate the assumption that the graph is simple.
\end{remark}

Next, we determine the minimal generating set of $\calN(G_{\calA})$ for a Sperner set $\calA$ satisfying the conditions in Construction~\ref{const. any sqfree mon ideal is an ONI}.

\begin{lemma}\label{lem. min. gen. of ONI of G_A}
    Let $V$, $\calA = \set{A_1,\dots,A_k}$, $\tau(\calA) = \set{T_1,\dots,T_m}$, and $G_{\calA}$ be as defined in Construction~\ref{const. any sqfree mon ideal is an ONI}.
    Then the minimal generating set of $\calN(G_\calA)$ is
    $$
    \Set{X_{N(t_i)}}{1 \leq i \leq m}.
    $$
\end{lemma}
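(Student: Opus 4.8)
The plan is to compare the full generating set of $\calN(G_\calA)$ with the claimed minimal one and discard the redundant generators. First I would record the two kinds of vertices of $G_\calA$: the original vertices in $V$ and the added vertices $t_1,\dots,t_m$. For the latter, Construction~\ref{const. any sqfree mon ideal is an ONI}(3) gives $N(t_i) = T_i$, so $X_{N(t_i)} = X_{T_i}$. For an original $v \in V$, since $G_\calA$ is built from the complete graph on $V$ and each $t_i$ is attached with $N(t_i) = T_i$, we have
$$
N(v) = (V \setminus \set{v}) \cup \Set{t_i}{v \in T_i}.
$$
Thus $\calN(G_\calA)$ is generated by $\Set{X_{N(v)}}{v \in V} \cup \Set{X_{T_i}}{1 \le i \le m}$, and since the minimal generating set of a monomial ideal consists of the generators divisible by no other generator, it suffices to show that every $X_{N(v)}$ is redundant while every $X_{T_i}$ is minimal.

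The key combinatorial input I would isolate is that $\bigcup_{i=1}^m T_i = V$, that is, every original vertex lies in at least one minimal transversal. This follows from transversal duality: since $\tau(\tau(\calA)) = \calA$, each $A_j$ is a minimal transversal of $\set{T_1,\dots,T_m}$, so every element of $A_j$ is essential and therefore lies in some $T_i$; hence $V = \bigcup_j A_j \subseteq \bigcup_i T_i$. Consequently, for each $v \in V$ the set $N(v)$ contains some added vertex $t_i$, so $N(v) \not\subseteq V$.

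To see that each $X_{N(v)}$ is redundant, I would use the hypothesis $|A_j| > 1$ for all $j$: no $A_j$ equals $\set{v}$, so $V \setminus \set{v}$ meets every $A_j$ and is a transversal of $\calA$ (here $|V| \ge 2$ since each $|A_j|>1$). It therefore contains a minimal transversal $T_i \in \tau(\calA)$, and $T_i \subseteq V \setminus \set{v} \subseteq N(v)$ yields $X_{T_i} \mid X_{N(v)}$; this divisibility is strict because $N(v)$ also contains an added vertex while $T_i \subseteq V$ does not. Hence $X_{N(v)}$ is not a minimal generator.

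Finally I would check that each $X_{T_i}$ is minimal, i.e. divisible by no other generator. It is not divisible by any $X_{T_j}$ with $j \ne i$, since $\tau(\calA)$ is a Sperner set and so $T_j \not\subseteq T_i$. It is not divisible by any $X_{N(v)}$, since by the union claim $N(v)$ contains some added vertex $t_j \notin T_i$, whence $N(v) \not\subseteq T_i$. Therefore the minimal generating set of $\calN(G_\calA)$ is exactly $\Set{X_{T_i}}{1 \le i \le m} = \Set{X_{N(t_i)}}{1 \le i \le m}$. The only nontrivial step is the union claim $\bigcup_i T_i = V$, which is where the duality $\tau(\tau(\calA)) = \calA$ is essential; the rest is routine divisibility bookkeeping, so I expect that claim to be the main point to get right.
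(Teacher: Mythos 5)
Your proof is correct, and its skeleton matches the paper's: reduce, via the Sperner property of $\tau(\calA)$, to showing that each generator $X_{N(v)}$ with $v \in V$ is redundant, and do so by locating some $T_i$ inside $V \setminus \set{v} \subseteq N(v)$, with the hypothesis $|A_j| > 1$ carrying the essential weight. The difference lies in how that $T_i$ is produced. The paper argues by contradiction through the duality $\tau(\tau(\calA)) = \calA$: if no $T_i$ avoided $v$, then $\set{v}$ would be a minimal transversal of $\tau(\calA)$, hence an element of $\calA$, contradicting $|A_j| > 1$. You argue directly: since no $A_j$ equals $\set{v}$, the set $V \setminus \set{v}$ is a transversal of $\calA$, and every transversal contains a minimal one, which must be some $T_i$. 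Your version of the key step is thus more elementary, needing only the trivial fact that transversals contain minimal transversals rather than the duality theorem. Somewhat ironically, you then invoke $\tau(\tau(\calA)) = \calA$ anyway, to prove the union claim $\bigcup_i T_i = V$; but that claim is only needed for your extra verifications that each $X_{T_i}$ is itself a minimal generator, and those verifications are dispensable. Once the ideal is known to be generated by the Sperner set $\Set{X_{T_i}}{1 \leq i \leq m}$, that set is automatically the unique minimal monomial generating set --- this is exactly how the paper closes the argument in one line. So, if you drop the union claim and lean on uniqueness of minimal generating sets of monomial ideals, your argument becomes a fully duality-free proof, arguably cleaner than the paper's; as written, it is correct but does strictly more work than necessary.
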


\begin{proof}
    Since the open neighborhoods $N(t_i)$ in $G_\calA$ coincide with the sets $T_i \in \tau(\calA)$, which form a Sperner set, it suffices to show that the monomials $X_{N(v)}$ for all $v \in V$ are redundant.
    Let $v \in V$.
    By Step (1) of Construction~\ref{const. any sqfree mon ideal is an ONI}, $v$ is adjacent to every other vertex in $V$; thus, $X_{V\setminus \set{v}}$ divides $X_{N(v)}$.
    Hence, it remains to show that there exists some $t_i \in V(G_\calA)$ such that $X_{N(t_i)} \mid X_{V\setminus \set{v}}$ (i.e., $N(t_i) \subseteq V \setminus \set{v}$).

    Assume, for the sake of contradiction, that no such vertex $t_i$ exists.
    This implies that for every $i$, $N(t_i) \not\subseteq V \setminus \set{v}$.
    Since $N(t_i) = T_i \subseteq V$, it follows that $v \in T_i$ for all $i = 1, \dots, m$.
    Consequently, the singleton set $\set{v}$ is a transversal of $\tau(\calA) = \set{T_1, \dots, T_m}$.
    Since no proper subset of $\set{v}$ can be a transversal, $\set{v}$ is a minimal transversal.
    Therefore, $\set{v} \in \tau(\tau(\calA)) = \calA$, which contradicts the assumption that $|A_j| > 1$ for all $j \in [k]$.
\end{proof}

\begin{example}
    Consider the graph $G_\calA$ in Figure~\ref{fig: 11}.
    Since
    $$
    N(v_1) = \set{v_2,v_3,v_4,v_5,t_1,t_2} \supset \set{v_2,v_3} = N(t_3),
    $$
    we have $X_{N(t_3)} \mid X_{N(v_1)}$.
    Similarly, we have $X_{N(t_2)} \mid X_{N(v_2)}$, $X_{N(t_4)} \mid X_{N(v_3)}$, $X_{N(t_1)} \mid X_{N(v_4)}$, and $X_{N(t_5)} \mid X_{N(v_5)}$.
    Thus, we obtain
    $$
    \calN(G_\calA) = \gen{v_1v_3, v_1v_5, v_2v_3, v_2v_4, v_3v_4} \subset \kk[v_1,\dots,v_5,t_1,\dots,t_5].
    $$
\end{example}

By Lemma~\ref{lem. min. gen. of ONI of G_A} and Step (3) in Construction~\ref{const. any sqfree mon ideal is an ONI}, the indeterminates $t_1,\dots,t_m$ do not divide any minimal generator of $\calN(G_{\calA})$.
Thus, $t_1,\dots,t_m$ form a regular sequence in $\kk[V,t_1,\dots,t_m]/\calN(G_\calA)$.
This leads to the following corollary, which combines Theorem~\ref{theorem. prime decomp. of ONI}, Construction~\ref{const. any sqfree mon ideal is an ONI}, and Lemma~\ref{lem. min. gen. of ONI of G_A}.

\begin{corollary}\label{cor. any SFM as ONI of chordal}
    Let $I \subset R = \kk[x_1,\dots,x_n]$ be a square-free monomial ideal with the irredundant primary decomposition
    $$
    I = \bigcap_{i = 1}^k \gen{A_i},
    $$
    where $|A_i| > 1$ for all $i \in [k]$.
    Set $\calA = \set{A_1,\dots,A_k}$, $\tau(\calA) = \set{T_1,\dots,T_m}$, and $R' = R[t_1,\dots,t_m]$.
    Then we have an isomorphism
    $$
    R/I \cong R'/(\calN(G_\calA) + \gen{t_1,\dots,t_m}),
    $$
    where $G_\calA$ is the graph defined in Construction~\ref{const. any sqfree mon ideal is an ONI}.
\end{corollary}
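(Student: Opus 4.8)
The plan is to show that $I$ and $\calN(G_\calA)$ are generated by literally the same set of square-free monomials, that none of these monomials involves the auxiliary variables $t_1,\dots,t_m$, and that collapsing those variables therefore identifies the two quotient rings. Everything reduces to bookkeeping with the three cited results, so the work is in matching up generators and primary components.

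First I would pin down the minimal generators of $I$. Since the decomposition $I = \bigcap_{i=1}^k \gen{A_i}$ is irredundant, $\calA$ is a Sperner set with $\bigcup_i A_i = V$ and each $|A_i| > 1$, so Construction~\ref{const. any sqfree mon ideal is an ONI} applies. A square-free monomial $X_S$ lies in $\gen{A_i}$ precisely when $S \cap A_i \neq \varnothing$, hence $X_S \in I$ exactly when $S$ is a transversal of $\calA$; the minimal such $S$ are the minimal transversals, so the minimal generators of $I$ are $\Set{X_{T_j}}{T_j \in \tau(\calA)}$, i.e. $I = \gen{X_{T_1},\dots,X_{T_m}}$.

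Next I would identify the generators of $\calN(G_\calA)$. By Construction~\ref{const. any sqfree mon ideal is an ONI} the set of minimal TD-sets of $G_\calA$ is exactly $\calA$, so Theorem~\ref{theorem. prime decomp. of ONI} gives the irredundant primary decomposition $\calN(G_\calA) = \bigcap_{i=1}^k \gen{A_i}$, matching that of $I$. Moreover, Lemma~\ref{lem. min. gen. of ONI of G_A} says the minimal generators of $\calN(G_\calA)$ are $\Set{X_{N(t_i)}}{1 \leq i \leq m}$, and by Step (3) of the Construction $N_{G_\calA}(t_i) = T_i$, so these generators are precisely $X_{T_1},\dots,X_{T_m}$ — the same monomials that generate $I$.

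Finally, since each $T_i \subseteq V \subseteq \set{x_1,\dots,x_n}$, no generator $X_{T_i}$ is divisible by any $t_j$; consequently $\calN(G_\calA) = I R'$ is the extension of $I$ along the inclusion $R \hookrightarrow R' = R[t_1,\dots,t_m]$. The surjection $R' \twoheadrightarrow R'/\gen{t_1,\dots,t_m} \cong R$ then carries $\calN(G_\calA) = IR'$ onto $I$, yielding the desired isomorphism $R/I \cong R'/(\calN(G_\calA) + \gen{t_1,\dots,t_m})$. There is no real obstacle here; the one point requiring care is the transversal/primary-decomposition duality, where I would use that irredundancy forces $\calA$ to be Sperner, so that $\tau(\tau(\calA)) = \calA$ and the minimal generators of $I$ are governed by $\tau(\calA)$ exactly as the open neighborhoods $N(t_i)$ are in $G_\calA$.
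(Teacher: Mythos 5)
Your proof is correct and takes essentially the same route as the paper: both arguments identify the minimal generators of $\calN(G_\calA)$ as $X_{N(t_i)} = X_{T_i}$ via Lemma~\ref{lem. min. gen. of ONI of G_A} and Step (3) of Construction~\ref{const. any sqfree mon ideal is an ONI}, observe that none of these involves $t_1,\dots,t_m$, and conclude that killing those variables collapses $R'/\calN(G_\calA)$ onto $R/I$. Your explicit computation that $I = \gen{X_{T_1},\dots,X_{T_m}}$ via the transversal duality $\tau(\tau(\calA)) = \calA$ simply spells out the identification the paper obtains by matching the primary decomposition of $\calN(G_\calA)$ (Theorem~\ref{theorem. prime decomp. of ONI} together with $\MTD(G_\calA) = \calA$) against that of $I$.
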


\bibliographystyle{unsrt}

\end{document}